\documentclass[11pt]{article}
\usepackage{amssymb}
\usepackage{mathrsfs}
\addtolength{\topmargin}{-.5in} \addtolength{\textheight}{1in}
\addtolength{\oddsidemargin}{-.5in}
\addtolength{\evensidemargin}{-.5in}
\addtolength{\textwidth}{1in}
\usepackage{latexsym,amsmath,amssymb,amsfonts,epsfig,graphicx,cite,psfrag}
\usepackage{eepic,color,colordvi,amscd}
\definecolor{blue}{rgb}{0,0,0.9}
\definecolor{red}{rgb}{0.9,0,0}
\definecolor{green}{rgb}{0,0.9,0}

\usepackage{ebezier}
\usepackage{verbatim}
\usepackage{subfigure}
\usepackage{enumitem}
\usepackage{algorithm2e}
\usepackage{dsfont}
\usepackage{amsthm}
\usepackage{comment}
\usepackage{tikz-cd}
\usepackage{color}
\usepackage{longtable}
\usepackage{hyperref}

\theoremstyle{thmstyleone}%
\newtheorem{theorem}{Theorem}
\newtheorem{proposition}[theorem]{Proposition}%
\newtheorem{assumption}[theorem]{Assumption}

\theoremstyle{thmstyletwo}%
\newtheorem{remark}{Remark}%
\usepackage{dsfont}
\theoremstyle{thmstylethree}%
\newtheorem{definition}{Definition}%
\newtheorem{lemma}{Lemma}%
\usepackage{longtable}

\setcounter{section}{0}

\def\H{\mathcal{H}}
\def\<{\left\langle}
\def\>{\right\rangle}

\def\E{\mathcal{E}}
\def\M{\mathcal{M}}
\def\T{\mathcal{T}}

\def\DD{{\rm Diag}}

\def\B{\mathcal{B}}
\def\C{\mathcal{C}}

\def\BH{\mathcal{BH}}
\def\I{\mathcal{I}}
\def\J{\mathcal{J}}
\def\R{\mathbb{R}}

\def\Re{{\rm Retr}}

\def\g{{\rm grad}}

\def\BG{\mathcal{BG}}

\def\V{\mathcal{V}}
\def\E{\mathcal{E}}
\def\S{\mathbb{S}}
\def\1{\mathds{1}}
\def\supp{{\rm supp}}
\def\Ran{{\rm Range}}
\def\Ker{{\rm Ker}}

\def\rr{{\rm rank}}
\def\dist{{\rm dist}}
\def\dd{{\rm diag}}
\def\P{{\rm Proj}}

\def\PA{\mathcal{P}_{A,b}}

\def\O{\mathcal{O}}

\def\({\left(}
\def\){\right)}

\def\Diag#1{{D}_{#1}}
\def\DD{{\rm Diag}}
\def\norm#1{\|#1\|}
\let\svthefootnote\thefootnote
\newcommand\blankfootnote[1]{%
	\let\thefootnote\relax\footnotetext{#1}%
	\let\thefootnote\svthefootnote%
}

\raggedbottom

\begin{document}
	\title{Optimization over convex polyhedra via Hadamard parametrizations}
	\author{Tianyun Tang
\thanks{Department of Mathematics, National 
         University of Singapore, Singapore 
         119076 ({\tt ttang@u.nus.edu}). 
         }, \quad 
	 Kim-Chuan Toh\thanks{Department of Mathematics, and Institute of 
Operations Research and Analytics, National 
         University of Singapore, 
       Singapore 
         119076 ({\tt mattohkc@nus.edu.sg}).  The research of this author is supported 
by the Ministry of Education, Singapore, under its Academic Research Fund Tier 3 grant call (MOE-2019-T3-1-010).} 
	 }
	\date{\today}
	\maketitle	


\begin{abstract}
In this paper, we study linearly constrained optimization problems (LCP).  After applying Hadamard parametrization, the feasible set of the parametrized problem (LCPH) becomes an algebraic variety, with conducive geometric properties which we explore in depth. We derive explicit formulas for the tangent cones and second-order tangent sets associated with the parametrized polyhedra. Based on these formulas, we develop a procedure to recover the Lagrangian multipliers associated with the constraints to verify the optimality conditions of the given primal variable without requiring additional constraint qualifications. Moreover, we develop a systematic way to stratify the variety into a disjoint union of finitely many Riemannian manifolds. This leads us to develop a hybrid algorithm combining Riemannian optimization and projected gradient to solve (LCP) with convergence guarantees. Numerical experiments are conducted to verify the effectiveness of our method compared with various state-of-the-art algorithms.
\end{abstract}

\bigskip
\noindent{\bf keywords:} convex polyhedra, Hadamard parametrization, algebraic variety, Riemannian optimization
\\[5pt]
{\bf Mathematics subject classification: 90C26, 90C30, 90C46}



\maketitle

\section{Introduction}\label{Sec-Intro}

\subsection{Optimization over convex polyhedra}\label{Subsec-LCP}
In this paper, we aim at exploring a new approach for solving the following optimization problem over a convex polyhedron:
\begin{equation}\label{LCP}
\min\left\{ \phi(x):\ Ax=b,\ x\in \R^n_+\right\}, \tag{LCP}
\end{equation}
where $\phi:\R^n\rightarrow \R$, $A=[A_1;A_2;\ldots;A_m]\in \R^{m\times n}$ and $b\in \R^m.$ 
We denote the feasible set of (\ref{LCP}) as  $\C_{A,b}$ and make the following basic assumption.

\begin{assumption}\label{ass1}
$\phi$ (not necessarily convex) is continuously differentiable;  $A$ has full row rank;
 $\C_{A,b}\neq \emptyset.$
\end{assumption}

Problem (\ref{LCP}) covers many types of optimization problems including linear programming (LP) and convex quadratic programming (CQP) \cite{LNP}. Naturally, it has wide range of applications such as portfolio optimization \cite{perold1984large} and machine learning \cite{rose1998deterministic}. 

Numerous algorithms have been developed to tackle (\ref{LCP}), as evidenced by various studies \cite{fletcher1972algorithm,powell1989tolerant,hager2023algorithm,di2023stationarity}. For specific types of objective functions, such as linear and convex quadratic functions, well-established techniques like simplex methods \cite{dantzig1951maximization} and interior point methods \cite{wright1997primal} have been comprehensively implemented in robust commercial solvers such as Gurobi \cite{gurobi} and Mosek \cite{mosek}. In addition to these classical approaches, contemporary algorithms that leverage the dual augmented Lagrangian method 
and semi-smooth Newton method \cite{liang2022qppal,li2020asymptotically,li2020efficient} have been shown to achieve asymptotic super-linear convergence. These methods have proven to be more efficient than commercial solvers for certain large scale structured problems. 
For specialized cases of convex polyhedra, including Birkhoff polytopes and transportation matrices, entropy-regularized methods \cite{yang2022bregman,cuturi2013sinkhorn} can offer the advantage of low per-iteration computational complexity to achieve a reasonable level of accuracy. While the aforementioned algorithms directly address (\ref{LCP}) by exploiting the properties of convexity and linearity inherent to convex polyhedra, this paper focuses on 
Hadamard parametrization for solving (\ref{LCP}) by transforming it into a nonlinear and nonconvex problem even when the original problem is convex.
 This approach will be further elaborated in the subsequent subsection.

\subsection{Hadamard parametrization and its limitations}\label{Subsec-Hadamard}
In (\ref{LCP}), the challenge of non-smoothness arises primarily from the nonnegativity constraint $x\in \R^n_+.$ One effective strategy for circumventing this constraint is through Hadamard parametrization that involves transforming $x$ into a squared variable $y\circ y,$ which leads us to the following modified problem with only equality constraints:
\begin{equation}\label{LCPH}
\min\left\{ f(y):=\phi(y\circ y):\ {A(y\circ y)}=b,\ y\in \R^n\right\}. \tag{LCPH}
\end{equation}
This transformation intrinsically preserves the non-negativity constraint on $x$. We define the feasible set of (\ref{LCPH}) as $\PA:=\left\{y\in \R^n:\ {A(y\circ y)}=b\right\},$ and referring to it as a parametrized polyhedron. Note that both (\ref{LCPH}) and (\ref{LCP}) share the same global optimal value. {Although $\PA$ doesn't involve inequality constraints, we must mention that it could still be non-smooth because it might contain singular points, whose tangent cone is not a linear space, i.e., $\PA$ might not be a smooth manifold.}

{The utilization of squared variables in optimization can be traced back to the 1960s  \cite{nash1998sumt}, with recent applications in Lasso-type problems \cite{hoff2017lasso,kolb2023edges}. More recently, this idea was further explored by Ding and Wright in \cite{ding2023squared} for general optimization problems, whose inequality constraints $c(x)\leq 0$ are converted into equality constraints of the form $c(x) + v\circ v = 0$ by
introducing squared slack variables. It is worth noting that when it comes to the  
problem (\ref{LCP}), their conversion simply add the constraint $x=v\circ v$ to the problem, which is slightly different from our case where the variable $x$ is directly replaced by $y\circ y$. 
At the moment, it is unclear whether the two approaches could show 
similar practical behaviour. } 

Despite its simplicity and intuitive appeal, {the usage of squared variables} 
has been subjected to considerable criticism for its inherent non-convexity and stringent regularity requirements \cite{armand2012squared,fukuda2017note}. {The former issue pertains to the case when $\phi$ is convex: whence 
the problem (\ref{LCP}) is a convex optimization problem but the new problem (\ref{LCPH}) is in general non-convex. Recent works on landscape analysis have observed that under the linearly independent constraint qualification (LICQ)  assumption \cite{numerical}, the smooth parametrizations of various optimization problems \cite{li2021simplex,Boumal2,BM2} have benign non-convexity. More generally, Levin et al., in their comprehensive study \cite{levin2022effect}, have identified conditions where a second-order stationary point in a smooth-parametrized problem also serves as a first-order stationary point of the original problem, which will be a global minimizer if the original problem is convex. Apart from the benign non-convexity, the LICQ assumption also implies that the feasible set of the parametrized problem is a smooth manifold, thus allowing us to use Riemannian optimization method \cite{absil2008optimization,intromani} to solve it with guaranteed convergence.
}

While the {non-convexity could be benign} under the LICQ assumption, the regularity issue is perceived as more serious {because of two issues. The first issue is about the optimality conditions of (\ref{LCPH}). The optimality condition of (\ref{LCP}), derived from Farka's lemma \cite{LNP}, does not require any regularity conditions. For a given $x\in \mathcal{C}_{A,b},$ checking the first order optimality condition is equivalent to an LP feasibility problem, which can be done in polynomial time. In contrast, the derivation of the optimality conditions of (\ref{LCPH}) is based on the implicit function theorem, which relies on the LICQ property. Note that the statement of the traditional KKT conditions for a given primal-dual solution $(x,\lambda)$ of (\ref{LCPH}) \cite[chapter 12]{numerical} 
though doesn't require the LICQ property but it 
requires the knowledge of the multiplier.
However, when it comes to landscape analysis \cite{levin2022effect}, we are only given the primal variable $y\in \PA$. In this case, as far as we know, how to recover the Lagrangian multiplier $\lambda$ to verify the optimality conditions of $y$ hasn't been studied yet. }

{The second issue is about algorithms to solve (\ref{LCPH}). Because $\mathcal{C}_{A,b}$ is a convex polyhedra, traditional algorithms like barrier method \cite[chapter 17]{numerical} and projected gradient method \cite{birgin2000nonmonotone} can be applied to solve it without LICQ condition. However, when it comes to (\ref{LCPH}), the commonly used algorithms like sequential quadratic programming \cite[chapter 18]{numerical} and Riemannian optimization method \cite{li2021simplex,xiao2023dissolving} rely on the LICQ property, which may not always hold for real-world problems. } 

In this paper, we aim to address the above two issues by conducting a comprehensive analysis of the (\ref{LCPH}) problem. The details of our contributions will be elaborated 
in the following subsections.

\subsection{Optimality conditions}\label{Subsec-opt}
{
If the LICQ property holds at some $y\in \PA,$ then $\PA$ is a smooth manifold in a neighbourhood of $y.$ In this case, we can easily recover the Lagrangian multiplier $\lambda\in\R^m$ by computing the projection of $\nabla f(y)$ onto the tangent space ${\rm T}_y \PA,$ which is equivalent to solving a positive definite linear system. After obtaining $\lambda,$ we can move on to verify the first order and second order KKT conditions, which is equivalent to that the Riemannian gradient is zero and the Riemannian Hessian is positive semidefinite \cite[chapter 4-6]{intromani}. The violation of each of the conditions will produce a direction to decrease the objective function value. 

However, when the LICQ property doesn't hold at $y\in \PA,$ its tangent cone may not be a linear space and the above approach doesn't work. In order to verify the optimality of a given singular point $y\in \PA$, in Section~\ref{Sec-optcon}, we will derive explicit formulas for both the tangent cone and the second-order tangent set of $y.$ With this information, we develop a systematic way to recover the Lagrangian multiplier and verify the first and second-order necessary conditions for (\ref{LCPH}). One corollary of our result is that the ``$2 \Rightarrow 1$" criterion in \cite{levin2022effect} holds for (\ref{LCP}) and (\ref{LCPH}). Here, instead of using the traditional KKT conditions \cite[chapter 12]{numerical}, we choose the more geometric optimality conditions from \cite[chapter 3]{Perturbation}, which are properties of the primal variable $y\in \PA$ itself and the conditions are always necessary. Thus, it is more suitable for landscape analysis and can be used to design local search algorithms. However, when it comes to the traditional KKT conditions, they are in general not necessary without the LICQ property. In this case, we can say nothing about the optimality of the primal-dual pair $(y,\lambda),$ even if they satisfy the traditional KKT conditions.}

\subsection{Riemannian optimization on algebraic variety}\label{Subsec-Rie} 
Riemannian optimization is a widely-used technique for solving smooth parametrized problems. However, directly applying the Riemannian optimization approach to (\ref{LCPH}) poses challenges, as the feasible set $\PA$ is an algebraic variety that may contain singular points. Although manifold optimization is well-understood and supported by a variety of solvers \cite{boumal2014manopt,xiao2022cdopt}, optimization over algebraic varieties remains an emerging area of study \cite{levin2020towards,levin2022finding,olikier2023apocalypse}. 

{When extending Riemannian optimization method to algebraic variety, a pivotal insight is that every real algebraic variety can be decomposed into finite smooth submanifolds \cite{whitney1992elementary}. With such decomposition, we can apply Riemannian optimization method on one of the manifolds and switch to another one when the iterations approach the boundary. This idea has been used in the determinantal variety $\R^{m\times n}_{\leq r}:=\left\{ X\in \R^{m\times n}:\ \rr(X)\leq r \right\}$ \cite{rankadap,lee2022escaping} and some special low-rank SDP problems, whose singular points are exactly rank-1 solutions \cite{tang2023feasible,tang2023solving}.} 

{In Section~\ref{Sec-alg}, we will adopt the above idea to solve (\ref{LCPH}). In order to apply Riemannian optimization, we will first stratify $\PA$ according to the maximal linearly independent set of active constraints.} Given the singularity of $\PA$, these submanifolds may not be closed, thus potentially causing Riemannian optimization methods to fail at convergence when the iterates approach a submanifold 
boundary -- a phenomenon termed $apocalypses$ \cite{levin2022finding}. To 
overcome this difficulty, we design a globally convergent hybrid method, which combines Riemannian optimization with the projected gradient method. The projected gradient method allows the iteration point to jump from one submanifold to another 
while decreasing the function value. On the other hand,
the Riemannian gradient descent can efficiently decrease the function value on the selected submanifold by exploiting its underlying sparsity structure and smoothness. {It is worth mentioning that recent works \cite{pauwels2024generic,olikier2024projected} show that applying projected gradient directly to (LCPH) don't suffer from the $apocalypses$ issue. However, it could be complicated to computed the metric projection onto the non-convex set $\PA.$ {On the contrary}, when using Riemannian optimization method, we can apply some iterative methods to compute the retraction for general nonlinear constraints as long as LICQ holds. We will present this in detail in Section~\ref{Sec-alg}.}

\subsection{Organization of this paper}
In the following subsection, we  provide some {notation} and definitions that will be used throughout this paper. In Section~\ref{Sec-optcon}, we  study the geometric properties of $\PA$ as well as {how to verify the} optimality conditions of (\ref{LCP}) and (\ref{LCPH}). In Section~\ref{sec-manidcmp}, we  study the manifold decomposition of $\PA.$ In Section~\ref{Sec-alg}, we present
 our algorithm and provide its convergence analysis. In Section~\ref{Sec-numer}, we  conduct numerical experiments to verify the efficiency of our algorithm. We end this paper with a conclusion in Section~\ref{Sec-conc}. 
 For smoother reading of the paper, we put some lemmas and proofs in the appendix.

\subsection{Basic notation and definitions}

\begin{itemize}
\item We use $\|\cdot\|$ to denote the Euclidean norm or the Frobenius norm.

\item For any $m,n\in\mathbb{N}^+,$ $\R^{m\times n}_+$ is the set of $m$ by $n$ nonnegative matrices. $\S^n$ is the set of $n$ by $n$ symmetric matrices. $\S^n_+$ is the set of $n$ by $n$ symmetric positive semidefinite matrices.

\item Given two matrices $M,N$ having the same number of columns, $[M\,;\,N]$ denotes
the matrix obtained by appending $N$ to the last row of $M$.

\item For any $A\in \R^{m\times n}$ and $\I\subset [m],\J\subset [n]$,
$A_{\I,:}\in \R^{|\I|\times n}$ represents the submatrix of $A$ that consists of the rows indexed by $\I$;
 $A_{:,\J}\in \R^{m\times |\J|}$ represents the submatrix of $A$ that consists of the columns indexed by $\J$; 
 $A_{\I,\J}$ represents the submatrix of $A$ that consists of rows indexed by $\I$ and columns indexed by $\J.$

\item For any $y\in \R^n,$  $\supp(y):=\left\{ i\in [n]:\ y_i\neq 0\right\}$ denotes the support of $y.$  Let $\1_y\in \R^n$ be the vector whose entries in $\supp(y)$ are 1's and entries outside $\supp(y)$ are 0's. We use ${\bf 1}_n$ and ${\bf 0}_n$ to denote the all ones vector and zero vector, respectively. For any $i\in [n],$ let $e_i$ be the $i$th column vector of the $n$ by $n$ identity matrix. We omit the length of $e_i$ when it is clear.

\item For any $y\in \R^n,$ $\DD(y)\in \R^{n\times n}$ denotes the diagonal matrix with its diagonal entries given by $y.$ We will sometimes also use $\Diag{y}$ to denote $\DD(y)$
to simplify the notation. 
For any $A\in \R^{n\times n},$ $\dd(A)\in \R^n$ is the vector of the diagonal entries of $A.$

\item For any matrix $A\in \R^{m\times n},$ we use ${\Ran[A]}:=\left\{Ax:\ x\in \R^n\right\}$ to denote the range space of $A,$ and $\Ker [A]:=\left\{x\in \R^n:\ Ax=0\right\}$ to denote the null space of $A.$

\item For any $y\in \PA,$ we call it a regular (smooth) point if the LICQ condition holds at $y$ i.e., $\{A_1^\top \circ y,\ldots,A_m^\top \circ y\}$ is a linearly independent set of vectors. Otherwise, we call $y$ a non-regular (singular) point. 

\item For any $r>0$ and $y\in \R^n,$ $B_r(y):=\left\{ z\in\R^n:\ \|z-y\|<r \right\}.$

\item
Let $S$ be a closed subset of $\R^n$ and $x\in S,$ the inner and contingent tangent cones \cite[chapter 2]{Perturbation} of $x$ are defined as follows:
\begin{equation}\label{ITcone}
\T^i_S(x):=\left\{h\in \R^n:\ \dist\(x+th,S\)=o(t),\ t\geq 0\right\}. 
\end{equation}
\begin{equation}\label{CTcone}
\T_S(x):=\left\{h\in \R^n:\ \exists\ t_k\downarrow 0,\ \dist\(x+t_kh,S\)=o(t_k)\right\}. 
\end{equation}

\item 
Let $S$ be a closed subset of $\R^n,$ $x\in S$ and $h\in \R^n,$ the inner and outer second-order tangent sets \cite[chapter 3]{Perturbation} of $(x,h)$ are defined as follows:
\begin{equation}\label{I2tset}
\T^{i,2}_S(x,h):=\left\{w\in \R^n:\ \dist\(x+th+{t^2w}/{2},S\)=o(t^2),\ t\geq 0\right\}.
\end{equation}
\begin{equation}\label{O2tset}
\T^2_S(x,h):=\left\{w\in \R^n:\ \exists\ t_k\downarrow 0,\ \dist\(x+t_kh+{t_k^2w}/{2},S\)=o(t_k^2)\right\}.
\end{equation}

\item For any closed convex set $\C$, $\P_\C(x):=\arg\min\left\{\|y-x\|:\ y\in \C\right\}$ is the orthogonal projection mapping.

\end{itemize}


\section{Optimality conditions}\label{Sec-optcon} 
In this section, we will derive the optimality conditions of (\ref{LCPH}) as well as its landscape analysis. Before we derive the optimality conditions, we first study the geometric properties of the algebraic variety $\PA$ in the following subsection. 

\subsection{Geometric properties of $\PA$}\label{subsec-geo}

\begin{definition}\label{Regrep}
For any $y\in \PA,$ let $[M;N]\in \R^{m\times m}$ be a full rank matrix such that the rows of $M$ is a basis of $\Ran[A\Diag{y}]$ and the rows of $N$ is a basis of its orthogonal complement in $\R^m$ (hence
$MN^\top=0$).
It is easy to see that $MA\Diag{y}$ has full row rank, $NA\Diag{y}=0$ and $Nb=0.$ Let $\rr(M)=\rr(A\Diag{y})=r.$ We call $(M,N,r)$ a regular representation of $y$ in $\PA.$ Note that by definition, for any $h\in \R^n$, $A\Diag{y} h = M^\top z$ for an unique $z\in \R^r$.
\end{definition}

Note also that the regular representation for a given $y\in \PA$ is not unique because the basis vectors of a linear space can be non-unique. Also, $y\in \PA$ is a regular point if and only if $r=m,$ in which case $N$ is a null matrix. The regular representation of $y\in \PA$ is only related to $\supp(y)$ because ${\Ran[A\Diag{y}]=\Ran[A\Diag{z}]}$ if $\supp(y)=\supp(z).$ The following two propositions provide the analytic formulas of the tangent cone and second-order tangent set of $\PA.$ We put their proofs in appendix.

\begin{proposition}\label{Prop-tcone}
For any $y\in \PA$ with regular representation $(M,N,r),$ the inner and contingent tangent cones of $y$ are given as follows:
\begin{equation}\label{PAtcone}
\T_{\PA}^i(y)=\T_{\PA}(y)=\widehat{\T}(y):=\left\{h\in \R^n:\ MA \(y\circ h\)=0,\ NA(h\circ h)=0\right\}.
\end{equation}
\end{proposition}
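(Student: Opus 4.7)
The plan is to establish the two non-trivial inclusions $\T_{\PA}(y) \subseteq \widehat{\T}(y)$ and $\widehat{\T}(y) \subseteq \T_{\PA}^i(y)$; combined with the tautological $\T_{\PA}^i(y) \subseteq \T_{\PA}(y)$, this gives the stated chain of equalities.

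For $\T_{\PA}(y) \subseteq \widehat{\T}(y)$, I pick sequences $t_k \downarrow 0$ and $z_k = y + t_k h + \epsilon_k \in \PA$ with $\|\epsilon_k\| = o(t_k)$, and expand $A(z_k\circ z_k) = b = A(y\circ y)$. Left-multiplying the resulting identity by $M$, dividing by $t_k$, and passing to the limit annihilates every term other than $2MA(y\circ h)$ (because $\epsilon_k/t_k \to 0$ and hence also $\|\epsilon_k\|^2/t_k \to 0$), yielding $MA(y\circ h) = 0$. Left-multiplying instead by $N$ destroys both terms containing a single factor of $y$ via $NA\Diag{y} = 0$; dividing the surviving terms by $t_k^2$ and taking limits then isolates $NA(h\circ h) = 0$. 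Hence $h \in \widehat{\T}(y)$.

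For $\widehat{\T}(y) \subseteq \T_{\PA}^i(y)$, the implicit function theorem fails on $\PA$ at $y$ whenever $r<m$, so I argue via the Hadamard map $\Phi(z):=z\circ z$ instead: I build a polynomial curve in the image polyhedron $\C_{A,b}$ and then lift. Write $I_+ := \supp(y)$ and $I_0 := [n]\setminus I_+$. Since $NA(h\circ h) = 0$ gives $A(h\circ h)\in \Ran[A\Diag{y}] = \Ran[A_{:,I_+}]$, I can choose $e\in\R^n$ with $e_{I_0} = 2\,h_{I_0}\circ h_{I_0}$ and $Ae = 0$. The quadratic curve
\[
x(t) := y\circ y + 2t\,(y\circ h) + \tfrac{t^2}{2}\, e
\]
then satisfies $Ax(t)\equiv b$, stays strictly positive on $I_+$ for small $t$, and equals $t^2 h_i^2 \geq 0$ on $I_0$, so $x(t)\in \C_{A,b}$. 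I lift $x(t)$ back via $z_i(t) := \sgn(y_i)\sqrt{x_i(t)}$ for $i\in I_+$ and $z_i(t):= t\,h_i$ for $i\in I_0$; a direct check gives $z(t)\circ z(t) = x(t)$, $z(0) = y$, and $z'(0) = h$. Therefore $z(t)\in\PA$ and $\dist(y+th,\PA) \leq \|z(t)-(y+th)\| = O(t^2) = o(t)$, i.e., $h\in\T_{\PA}^i(y)$.

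The main obstacle is precisely this explicit lift: because $\PA$ is a genuine algebraic variety with a possible singularity at $y$, manifold-based arguments such as the inverse function theorem are unavailable, and one needs the special structure of the Hadamard parametrization to produce a polynomial curve that simultaneously matches the first- and second-order Taylor data at $y$ and stays inside the non-negativity cone in the $I_0$ coordinates.
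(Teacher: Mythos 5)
Your proposal is correct and follows essentially the same route as the paper: the first inclusion is proved by the identical expansion argument (multiplying by $M$ and $N$ and matching orders of $t_k$), and the second by constructing a quadratic curve in $x$-space that stays in $\C_{A,b}$ and lifting it through the square root — your correction term $e$ (with $e_{I_0}=2h_{I_0}\circ h_{I_0}$ and $Ae=0$) is exactly $2(h\circ h+y\circ w)$ for the paper's choice of $w$ with $MA(h\circ h+y\circ w)=0$. Your sign-adjusted lift $z_i(t)=\sgn(y_i)\sqrt{x_i(t)}$ is in fact slightly more careful than the paper's bare $\sqrt{\gamma(t)}$, which implicitly assumes the signs work out.
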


\begin{proposition}\label{Prop-2tset} 
Consider $y\in \PA$ with regular representation $(M,N,r)$ and $h\in \T_{\PA}(x).$ 
Let $[V;W]\in \R^{(m-r)\times (m-r)}$ be a full rank matrix such that the rows of $V$ is a basis for ${\Ran[NA\Diag{h}]}$ and $VW^\top=0.$ Then the inner and outer second-order tangent sets of $(y,h)$ are given as follows:
\begin{multline}
\T^{i,2}_{\PA}(y,h)=\T^2_{\PA}(y,h)=\widehat{\T^2}(y,h):=\big\{ w\in \R^n:\ MA \(y\circ w+h\circ h\)=0,\\ VNA(h\circ w)=0,\ 
 WNA\(w\circ w\)=0\big\} \label{PA2tset}.
\end{multline}
\end{proposition}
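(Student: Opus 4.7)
The plan is to prove $\T^{i,2}_{\PA}(y,h) = \T^2_{\PA}(y,h) = \widehat{\T^2}(y,h)$ via the chain $\T^2_{\PA}(y,h) \subseteq \widehat{\T^2}(y,h) \subseteq \T^{i,2}_{\PA}(y,h)$, since $\T^{i,2} \subseteq \T^2$ holds automatically. Three facts will be used throughout: (F1) $NA\DD(y) = 0$, immediate from the definition of $N$, which implies $NA(y\circ v) = 0$ for every $v \in \R^n$ and $Nb = 0$; (F2) $A(y\circ h) = 0$, since $MA(y\circ h) = 0$ from Proposition~\ref{Prop-tcone} combined with $A\DD(y)h \in \Ran[M^\top]$ and the invertibility of $MM^\top$ force $A\DD(y)h = 0$; and (F3) $WNA\DD(h) = 0$, since $\Ran[NA\DD(h)] = \Ran[V^\top]$ is orthogonal to the rows of $W$.

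For the necessity direction, I would take $w \in \T^2_{\PA}(y,h)$ with witnesses $t_k \downarrow 0$ and $r_k = o(t_k^2)$ such that $z_k := y + t_k h + \tfrac{t_k^2}{2}w + r_k \in \PA$, expand $Az_k^{\circ 2} = b$, and use (F2) to cancel the order-$t_k$ term, arriving at
\[
0 = t_k^2 A(y\circ w + h\circ h) + t_k^3 A(h\circ w) + \tfrac{t_k^4}{4}A(w\circ w) + 2A(y\circ r_k) + 2t_k A(h\circ r_k) + t_k^2 A(w\circ r_k) + A(r_k\circ r_k).
\]
Scaling by $t_k^{-2}$ and passing to the limit---all $r_k$-terms vanish since $r_k = o(t_k^2)$---yields $A(y\circ w + h\circ h) = 0$, hence condition~(i) after applying $M$. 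Projecting by $N$ and using (F1) together with $NA(h\circ h) = 0$ from Proposition~\ref{Prop-tcone} to clear the lower-order terms, then scaling by $t_k^{-3}$, gives $NA(h\circ w) = 0$, i.e., condition~(ii). Finally, applying $W$ eliminates every $WNA(h\circ \cdot)$ term by (F3)---crucially the $r_k$-dependent remainder $2t_k WNA(h\circ r_k)$---so that, after invoking (ii) and scaling by $t_k^{-4}$, condition~(iii) emerges.

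For the sufficiency direction, the crux is a \emph{compatibility lemma}: there exists $\beta \in \R^n$ satisfying simultaneously $MA\DD(y)\beta = -\tfrac{1}{2}MA(h\circ w)$ and $NA\DD(h)\beta = -\tfrac{1}{8}NA(w\circ w)$. Each equation is individually solvable (by (ii) and (iii) respectively), and simultaneous solvability reduces to $\Ker[A\DD(y)] + \Ker[NA\DD(h)] = \R^n$; dually, this amounts to verifying that any $v \in \Ran[\DD(y)A^\top] \cap \Ran[\DD(h)A^\top N^\top]$ vanishes. Writing $v = \DD(y)A^\top \alpha = \DD(h)A^\top N^\top \zeta$, the transpose of (F1) forces $y\circ (A^\top N^\top \zeta) = 0$, so on $\supp(y)$ one has $(A^\top N^\top \zeta)_i = 0$, and thus $y_i (A^\top \alpha)_i = h_i \cdot 0 = 0$; off $\supp(y)$, $v_i = 0$ trivially, giving $v = 0$. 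With $\beta$ in hand, form the polynomial ansatz $\tilde z(t) := y + th + \tfrac{t^2}{2}w + t^3 \beta$; a direct expansion using (i)-(iii) and the two equations for $\beta$ verifies $MA\tilde z(t)^{\circ 2} - Mb = O(t^4)$ and $NA\tilde z(t)^{\circ 2} = O(t^5)$. Applying the implicit function theorem to the smooth constraint $MAz^{\circ 2} = Mb$ (whose Jacobian $2MA\DD(y)$ at $y$ is surjective) produces a smooth curve $\gamma(t) \in S_1 := \{z : MAz^{\circ 2} = Mb\}$ with $\gamma(t) = \tilde z(t) + O(t^4)$, and the identity $NA\DD(h)\DD(y) = 0$ (from (F1) and diagonal-matrix commutativity) preserves $NA\gamma(t)^{\circ 2} = O(t^5)$. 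A Lojasiewicz-type estimate on the real-analytic residual $\xi \mapsto NA\phi(\xi)^{\circ 2}$ in local coordinates on $S_1$ then yields $\dist(\gamma(t), \PA) = o(t^2)$, hence $\dist(y + th + \tfrac{t^2}{2}w, \PA) = o(t^2)$, placing $w$ in $\T^{i,2}_{\PA}(y,h)$.

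The main obstacle I anticipate is the final correction step of the sufficiency argument: converting the approximate curve $\gamma \subset S_1$ (with $N$-residual of order $t^5$) into an exact curve in the possibly singular variety $\PA$ while preserving the $o(t^2)$ distance estimate. This is where a Lojasiewicz-type inequality---or alternatively a successive-correction scheme matching higher-order Taylor coefficients, which may require a further subtle choice of $\beta$ and of later coefficients of $\gamma$---must be invoked. The compatibility lemma, although technically the heart of the sufficiency argument, becomes tractable once one recognizes that the seemingly mild identity $NA\DD(y) = 0$ transposes to the pointwise constraint $y\circ (A^\top N^\top \zeta) = 0$ that rules out nontrivial elements of the dual intersection.
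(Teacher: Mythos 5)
Your necessity direction is correct and essentially identical to the paper's: expand $A z_k^{\circ 2}=b$, cancel lower-order terms using $NA\Diag{y}=0$, $A(y\circ h)=0$, $NA(h\circ h)=0$ and $WNA\Diag{h}=0$, and extract the coefficients at orders $t_k^2,t_k^3,t_k^4$. (Your stronger conclusions $A(y\circ w+h\circ h)=0$ and $NA(h\circ w)=0$ are equivalent to conditions (i)--(ii) of \eqref{PA2tset}, since $NA(h\circ w)\in\Ran[NA\Diag{h}]=\Ran[V^\top]$.) Your compatibility lemma is also correct, and its dual-intersection proof is in substance the justification for the paper's claim that $[MA\Diag{y};VNA\Diag{h}]$ has full row rank.

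The gap is in the last step of sufficiency. You produce a curve $\gamma(t)\in S_1=\{z:MAz^{\circ 2}=Mb\}$ with $\gamma(t)=y+th+\tfrac{t^2}{2}w+O(t^3)$ and $NA\gamma(t)^{\circ 2}=O(t^5)$, and then invoke ``a Lojasiewicz-type estimate'' to get $\dist(\gamma(t),\PA)=o(t^2)$. This does not follow: the Lojasiewicz inequality only gives $\dist(z,\PA)\le C\|Az^{\circ 2}-b\|^{\theta}$ for \emph{some} exponent $\theta\in(0,1]$ depending on the variety near the singular point $y$, and a residual of order $t^5$ yields $o(t^2)$ only if $\theta>2/5$. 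You do not compute or bound $\theta$, and for a singular real algebraic variety it can be small; indeed the $W$-block of the constraint $z\mapsto NAz^{\circ 2}$ has derivative vanishing to second order along your curve ($WNA\Diag{y}=WNA\Diag{h}=0$), which is exactly the degenerate situation in which naive exponents fail. The fallback you mention (successive Taylor corrections in $y$-space) also does not obviously terminate: each new coefficient of $\tilde z(t)$ perturbs the square at all higher orders, so the residual is never killed identically. The paper sidesteps all of this by working in $x$-space, where the constraint $Ax=b$ is \emph{linear}: it builds the explicit degree-$4$ polynomial $\gamma(t)=y\circ y+2ty\circ h+t^2(h\circ h+y\circ w)+t^3(h\circ w+2y\circ z)+t^4(w\circ w/4+2h\circ z+2y\circ q)$, chooses $z,q$ so that every coefficient is annihilated by $[M;VN;WN]A$, and hence gets $A\gamma(t)=b$ \emph{exactly} with no tail to correct; nonnegativity for small $t$ follows since $\gamma(t)=(y+th+t^2w/2)^{\circ 2}+O(t^3)$ with the perturbation supported where the square is of lower order, and $\sqrt{\gamma(t)}\in\PA$ gives the required $o(t^2)$ witness. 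To repair your argument, replace the IFT-plus-Lojasiewicz correction by this exact polynomial construction (your $\beta$ already encodes the paper's $z$; you additionally need the paper's $q$ for the order-$t^4$ condition in the $M$-block).
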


\begin{remark}\label{2nddd}
Proposition~\ref{Prop-2tset} implies that $\PA$ is second-order directionally differentiable at any point $y\in \PA$ along any direction $h\in \T_{\PA}(y)$ \cite[definition 3.32]{Perturbation}. Note that the set $\widehat{\T}(y)$ in Proposition \ref{Prop-tcone}
 is independent of the representation $(M,N,r)$. That is,
if $(M',N',r)$ is another representation of $y\in \PA$, then $\widehat{\T}(y) = \widehat{\T'}(y) :=
\{ h\in \R^n : M'A(y\circ h) =0, N' A(h\circ h) = 0\}.$ A similar remark also holds
for the set $\widehat{\T^2}(y,h)$ in Proposition  \ref{Prop-2tset}.

\end{remark}


\subsection{Optimality conditions of (\ref{LCP}) and (\ref{LCPH})}\label{subsec-opt}
With the formula of tangent cones and second-order tangent sets of $\PA,$ we are now able to derive the optimality conditions. Since the feasible set of (\ref{LCP}) is a convex polyhedral set, the linear constraint qualification (LCQ) holds and we don't need other regularity assumptions. The first-order necessary conditions of (\ref{LCP}) are as follows.

\begin{definition}\label{LCP1stKKT} [first-order necessary conditions]
$x\in \R^n$ is called a first-order stationary point of (\ref{LCP}) if there exists $\lambda\in \R^m$ such that
\begin{equation}\label{LCPKKT}
(1)\ A x=b,\ x\in \R^n_+,\ (2)\ \nabla \phi(x)-A^\top \lambda\in \R^n_+,\ (3)\ \(\nabla \phi(x)-A^\top \lambda\)\circ x=0.
\end{equation}
We refer to the above three conditions as primal feasibility, dual feasibility and complementarity, respectively. 
\end{definition}
We do not consider second-order necessary condition of (\ref{LCP}) because verifying this property is in general NP-hard \cite{quadNPhard}. Also, for many  applications of (\ref{LCP}), $\phi$ is a convex function so that a first-order {stationary} point is {already} a global {minimizer}. 

Now, we move on to study the optimality conditions of (\ref{LCPH}). {As mentioned in Subsection~\ref{Subsec-opt}, instead of using the traditional KKT conditions \cite[chapter 12]{numerical}, we use the following more geometric optimality conditions \cite[chapter 3]{Perturbation}.}

\begin{definition}\label{LCPN1stKKT} [first-order necessary conditions]
$y\in \R^n$ is called a first-order stationary point of (\ref{LCPH}) if $y\in \PA$ and $\nabla f(y)^\top h\geq 0,$ for any $h\in \T_{\PA}(y).$ 
\end{definition}

\begin{definition}\label{LCPN2ndKKT} [second-order necessary conditions]
$y\in \R^n$ is called a second-order stationary point of (\ref{LCPH}) if $y$ is a first-order stationary point, and $\nabla f(y)^\top w+h^\top \nabla^2 f(y) h\geq 0$ for any $h\in \T_{\PA}(y)$ satisfying $\nabla f(y)^\top h=0$ and any $w\in \T^2_{\PA}(y,h).$
\end{definition}

{Although the above two definitions are purely geometric, from the formulas in Proposition~\ref{Prop-tcone} and \ref{Prop-2tset}, we can obtain the algebraic characterization of stationary points. We state the results in} the following two lemmas, which offer reformulations of the above two optimality conditions to make them easily verifiable.

\begin{lemma}\label{1stlem}
Suppose $y\in \PA$ has regular representation $(M,N,r),$ $x=y\circ y.$ Then $y$ is a first-order stationary point of (\ref{LCPH}) if and only if there exists $\lambda\in \R^{r}$ such that 
\begin{equation}\label{Oct_28_1}
\(\nabla \phi(x)-A^\top M^\top \lambda\)\circ y=0,
\end{equation}
which is equivalent to $\nabla \phi(x)\circ y\in \Ran[ \Diag{y}A^\top ],$ i.e., $\P_{\Ker[A\Diag{y}]}\( \nabla\phi(x)\circ y \)=0.$
\end{lemma}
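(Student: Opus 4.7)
The plan is to start from the geometric definition of a first-order stationary point and reduce it to a linear algebra statement by exploiting the explicit description of $\T_{\PA}(y)$ obtained in Proposition~\ref{Prop-tcone}. First I would compute $\nabla f(y)=2\nabla\phi(x)\circ y$ by the chain rule, so that the condition in Definition~\ref{LCPN1stKKT} becomes $(\nabla\phi(x)\circ y)^\top h\geq 0$ for every $h\in\T_{\PA}(y)$. The crucial initial observation is that $\T_{\PA}(y)$ is symmetric about the origin: the defining equation $MA(y\circ h)=0$ is linear in $h$ and $NA(h\circ h)=0$ is invariant under $h\mapsto -h$, so if $h$ lies in the cone then so does $-h$. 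This upgrades the variational inequality to the equality $(\nabla\phi(x)\circ y)^\top h=0$ on the whole cone.

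Next I would peel off the support structure. Set $I=\supp(y)$ and write $g:=\nabla\phi(x)\circ y$, so $g_{I^c}=0$ automatically. From $NA\Diag{y}=0$ and $y_i\neq 0$ for $i\in I$, one reads off $NA_{:,I}=0$; consequently the two defining constraints of $\T_{\PA}(y)$ decouple as $MA_{:,I}\Diag{y_I}h_I=0$ and $NA_{:,I^c}(h_{I^c}\circ h_{I^c})=0$, the first constraining only $h_I$ and the second only $h_{I^c}$. In particular, every $h$ with $h_{I^c}=0$ and $MA_{:,I}\Diag{y_I}h_I=0$ already lies in $\T_{\PA}(y)$, so the stationarity condition collapses to $g_I^\top h_I=0$ for all $h_I\in\Ker[MA_{:,I}\Diag{y_I}]$.

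By the elementary range--kernel duality, this is equivalent to $g_I\in\Ran[\Diag{y_I}A_{:,I}^\top M^\top]$, i.e.\ $g_I=\Diag{y_I}A_{:,I}^\top M^\top\lambda$ for some $\lambda\in\R^r$. Extending trivially by zero on $I^c$ (legal because $y_{I^c}=0$) yields $\nabla\phi(x)\circ y=\Diag{y}A^\top M^\top\lambda$, which is exactly \eqref{Oct_28_1}. For the two equivalent reformulations, I would note that any $\mu\in\R^m$ decomposes as $\mu=M^\top\lambda+N^\top\nu$ because $[M;N]$ has full row rank; since $\Diag{y}A^\top N^\top=(NA\Diag{y})^\top=0$, the $N^\top\nu$ part contributes nothing to $\Diag{y}A^\top\mu$, so membership in $\Ran[\Diag{y}A^\top]$ and in $\Ran[\Diag{y}A^\top M^\top]$ are the same. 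The final reformulation via the projection onto $\Ker[A\Diag{y}]$ is just $\Ran[B^\top]^\perp=\Ker[B]$ applied with $B=A\Diag{y}$.

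The main obstacle I anticipate is keeping the bookkeeping clean: verifying that $NA_{:,I}=0$ from the definition of the regular representation, using it to decouple the two defining conditions of the tangent cone, and then invoking the symmetry of the cone to pass from an inequality to an equality. Once these routine but non-obvious steps are handled, the rest is standard linear algebra and no further regularity on $y$ is needed.
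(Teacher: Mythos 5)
Your proposal is correct and follows essentially the same route as the paper's proof: both reduce the variational inequality to testing against directions supported on $\supp(y)$ (for which the quadratic constraint $NA(h\circ h)=0$ is automatic because $NA\Diag{y}=0$), observe that these directions form a linear subspace so the inequality becomes orthogonality to $\Ker[MA\Diag{y}]$, and then invoke range--kernel duality together with $\Diag{y}A^\top N^\top=0$ to identify $\Ran[\Diag{y}A^\top M^\top]$ with $\Ran[\Diag{y}A^\top]$. The only cosmetic difference is that you make the cone's symmetry under $h\mapsto -h$ and the index-set decoupling $NA_{:,I}=0$ explicit, whereas the paper achieves the same effect by truncating an arbitrary $h$ to $h\circ\1_y$.
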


\begin{proof}
\noindent{\bf Necessity.} Suppose $y\in \PA$ is a first-order stationary point. For any $h\in \R^n$ such that $MA(y\circ h)=0,$ we have that $MA(y\circ (h\circ \1_y))=0.$ Because $NA\Diag{y}=0,$ we have that $NA\((h\circ \1_y)\circ (h\circ \1_y)\)=0.$ Thus, $h\circ\1_y\in \T_{\PA}(y).$ From Definition~\ref{LCPN1stKKT}, we have that $\<\nabla \phi(x)\circ y,h\circ \1_y\>\geq 0,$ which implies $\<\nabla \phi(x)\circ y,h\>\geq 0.$ Since this inequality holds for any $h\in \R^n$ such that $MA(y\circ h)=0,$ we have that $\nabla \phi(x)\circ y\in \Ran[ \Diag{y}A^\top M^\top ]=\Ran[ \Diag{y}A^\top ].$ Thus, there exists $\lambda\in \R^{r}$ such that (\ref{Oct_28_1}) holds.

\medskip

\noindent{\bf Sufficiency.} Suppose (\ref{Oct_28_1}) holds. Then for any $h\in \T_{\PA}(y),$ we have that 

$$\<\nabla \phi(x)\circ y,h\>=\<\(A^\top M^\top \lambda\)\circ y,h\>=\<\lambda,MA(y\circ h)\>=0,$$ where the last equality follows from the fact that $MA(y\circ h)=0$ 
for $h\in\T_{\PA}(y)$ (see Proposition~\ref{Prop-tcone}).
\end{proof}

\begin{lemma}\label{2ndlem}
Suppose $y\in \PA$ has regular representation $(M,N,r)$ and $x=y\circ y.$ Then $y$ is a second-order stationary point of (\ref{LCPH}) if and only if there exists $\lambda\in \R^{m}$ such that (\ref{LCPKKT}) holds and 
\begin{equation}\label{Oct_28_6}
(d\circ \1_y)^\top \nabla^2 \phi(x) (d\circ \1_y)\geq 0,
\end{equation}
for any $d\in \R^n$ satisfying $A(d\circ \1_y)=0.$
\end{lemma}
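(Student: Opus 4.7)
The plan is to handle both directions by combining the explicit tangent cone and second-order tangent set formulas of Propositions~\ref{Prop-tcone}--\ref{Prop-2tset} with a splitting $\R^m = M^\top\R^r \oplus N^\top\R^{m-r}$ of the multiplier space. Since $f(y) = \phi(y\circ y)$, a direct computation yields
\begin{equation*}
\nabla f(y)^\top w + h^\top \nabla^2 f(y)\, h \;=\; 2\,\nabla\phi(x)^\top\bigl(y\circ w + h\circ h\bigr) + 4\,(y\circ h)^\top \nabla^2\phi(x)\,(y\circ h),
\end{equation*}
and for $h\in \T_{\PA}(y)$ and $w\in \T^2_{\PA}(y,h)$ one has $A(y\circ w+h\circ h)=0$ (the $M$-part from the second-order tangent set definition, the $N$-part because $NA\Diag{y}=0$ and $NA(h\circ h)=0$). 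Hence any multiplier $\lambda$ may be inserted into the first term without changing its value. This identity is the main computational engine.

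For sufficiency, first-order stationarity follows from Lemma~\ref{1stlem} after writing $\lambda = M^\top\lambda_0 + N^\top\eta$: since $A_{:,i}$ lies in the row space of $M$ for $i\in\supp(y)$, one has $(A^\top N^\top\eta)_i=0$ there, so complementarity forces $(\nabla\phi(x)-A^\top M^\top\lambda_0)\circ y = 0$. For the second-order inequality, set $g:=\nabla\phi(x)-A^\top\lambda\geq 0$; complementarity gives $g_i=0$ on $\supp(y)$, so $g^\top(y\circ w)=0$ and the first term becomes $2\sum_i g_i h_i^2\geq 0$. Letting $d:=y\circ h$, the tangent cone formula gives $A(d\circ\1_y)=A(y\circ h)=0$, and the hypothesis (\ref{Oct_28_6}) makes the second term nonnegative.

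For the necessity of (\ref{Oct_28_6}), given $d$ with $A(d\circ\1_y)=0$, I would define $h$ by $h_i := d_i/y_i$ on $\supp(y)$ and $h_i := 0$ otherwise, so that $y\circ h = d\circ\1_y$. Since $\supp(h)\subseteq\supp(y)$, the relation $NA\Diag{y}=0$ forces every column of $NA$ indexed by $\supp(y)$ to vanish, giving $NA(h\circ h)=0$ and hence $h\in \T_{\PA}(y)$; a short computation via Lemma~\ref{1stlem} also shows $\nabla f(y)^\top h=0$. Choosing $w$ supported on $\supp(y)$ with $MA(y\circ w)=-MA(h\circ h)$ (solvable by full row rank of $MA\Diag{y}$) puts $w\in \T^2_{\PA}(y,h)$, and the identity above collapses the expression to $4(d\circ\1_y)^\top\nabla^2\phi(x)(d\circ\1_y)\geq 0$.

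The hard part is promoting $\lambda_0$ to a full KKT multiplier $\lambda = M^\top\lambda_0 + N^\top\eta$ satisfying dual feasibility $g\geq 0$ on $\supp(y)^c$; this is where the second-order hypothesis is crucial. I would argue by contradiction via Farkas' lemma: infeasibility of the linear system $A^\top N^\top\eta \leq \nabla\phi(x)-A^\top M^\top\lambda_0$ on $\supp(y)^c$ yields a certificate $v\in\R^n_+$ with $\supp(v)\subseteq\supp(y)^c$, $NAv=0$, and $v^\top(\nabla\phi(x)-A^\top M^\top\lambda_0)<0$. The pivotal step is the componentwise square root $h^*_i := \sqrt{v_i}$, which converts the linear constraint $NAv=0$ into the quadratic constraint $NA(h^*\circ h^*)=NAv=0$; together with $y\circ h^*=0$ this yields $h^*\in \T_{\PA}(y)$ with $\nabla f(y)^\top h^*=0$. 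Taking $w$ supported on $\supp(y)$ with $MA(y\circ w)=-MAv$ places $w\in \T^2_{\PA}(y,h^*)$, and evaluating the main identity gives the second-order quantity as $2 v^\top(\nabla\phi(x)-A^\top M^\top\lambda_0)<0$, the desired contradiction. This Farkas-plus-square-root construction -- leveraging the nonnegativity of the infeasibility certificate to realize a genuine tangent direction -- is the principal obstacle.
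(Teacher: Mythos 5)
Your proof is correct and follows essentially the same route as the paper's: the same expansion of $\nabla f(y)^\top w+h^\top\nabla^2 f(y)h$, the same test directions ($h$ supported on $\supp(y)$ for the Hessian condition, $\sqrt{v}$ supported on $\supp(y)^c$ for dual feasibility), and the same Farkas-based recovery of the $N^\top$-component of the multiplier. The only cosmetic difference is that you run the Farkas step as a proof by contradiction restricted to $\supp(y)^c$, whereas the paper verifies the Farkas condition directly for general $z\in\R^n_+$ with $NAz=0$ after splitting $z=z'+z''$.
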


\begin{proof}
\bigskip

\noindent{\bf Necessity.} From Proposition~\ref{Prop-2tset}, Definition~\ref{LCPN2ndKKT} and Lemma~\ref{1stlem}, we know that $y\in \PA$ is a second-order stationary point if and only if there exists $\lambda'\in \R^{r}$ such that (\ref{Oct_28_1}) and the following condition holds:
\begin{equation}\label{Oct_28_7}
\<\nabla \phi(x),h\circ h+y\circ w\>+2(y\circ h)^\top \nabla^2 \phi(x)(y\circ h)\geq 0,
\end{equation} 
for any $h\in \T_{\PA}(y)$ such that $\nabla f(y)^\top h=0$ and $w\in \T^2_{\PA}(y,h).$ 

For any $h\in \T_{\PA}(y),$ we have that $MA(y\circ h)=0$ and $NA(h\circ h)=0.$ Since
the mapping $w\rightarrow MA(y\circ w)$ is surjective, we may choose $w\in \R^n$ such that $MA(y\circ w+h\circ h)=0$ and $\supp(w)\subset \supp(y).$ Because $w=w\circ \1_y$ and $NA\DD(\1_y) = 0$, 
 we know that $VNA(h\circ w)=0$ and $WNA(w\circ w)=0.$ Thus, $w\in \T^2_{\PA}(y,h).$ From (\ref{Oct_28_1}) and $MA\(y\circ h\)=0$, we have that
\begin{equation}\label{Oct_28_8}
\nabla f(y)^\top h=\< 2\nabla \phi(x)\circ y,h \>=2\<\(A^\top M^\top\lambda'\)\circ y,h\>=2\<\lambda',MA\(y\circ h\)\>=0.
\end{equation}
Therefore, $\(h,w\)$ satisfies all the conditions of (\ref{Oct_28_7}). We get the following inequality
\begin{eqnarray}
0 &\leq& \<\nabla \phi(x),h\circ h+y\circ w\>+2(y\circ h)^\top \nabla^2 \phi(x)(y\circ h)
 \nonumber \\
&=&\<\nabla \phi(x),h\circ h\>+\<\(A^\top M^\top \lambda'\)\circ y,w \>+2(y\circ h)^\top \nabla^2 \phi(x)(y\circ h) 
 \nonumber \\ 
&=&\<\nabla \phi(x),h\circ h\>+\<\lambda',MA\(y\circ w\) \>+2(y\circ h)^\top \nabla^2 \phi(x)(y\circ h)
 \nonumber \\  
&=&\<\nabla \phi(x),h\circ h\>-\<\lambda',MA\(h\circ h\) \>+2(y\circ h)^\top \nabla^2 \phi(x)(y\circ h)
 \nonumber \\ 
&=& \<\nabla \phi(x)-A^\top M^\top \lambda',h\circ h\>+2(y\circ h)^\top \nabla^2 \phi(x)(y\circ h),
\label{Oct_28_9}
\end{eqnarray} 
where the first equality comes from (\ref{Oct_28_1}) and the third equality comes from that $MA(y\circ w+h\circ h)=0.$ Now we are going to choose a specific $h\in \T_{\PA}(y)$ to deduce (\ref{Oct_28_6}).

First, consider any $d\in \R^n$ such that $A(d\circ \1_y)=0.$ We have that $d\circ \1_y=y\circ h'$ for some vector $h'\in \R^n$ such that $\supp(h')\subset\supp(y).$ Because $MA(y\circ h')=0$ and $NA(h'\circ h')=NA(h'\circ \1_y \circ h'\circ \1_y)=0.$ We have that $h'\in \T_{\PA}(y).$ Substituting $h'$ into (\ref{Oct_28_9}) and using (\ref{Oct_28_1}), we have that $\<\nabla \phi(x)-A^\top M^\top \lambda',h'\circ h'\>=0$ and hence $2(y\circ h')^\top \nabla^2 \phi(x)(y\circ h')\geq 0.$ Since $d\circ \1_y=y\circ h',$ we have that $2(d\circ \1_y)^\top \nabla^2 \phi(x)(d\circ \1_y)\geq 0,$ which verifies (\ref{Oct_28_6}).

Next, we will construct $\lambda\in \R^m$ such that \eqref{LCPKKT} holds. Consider any $z\in \R^n_+$ such that $NAz=0.$ We decompose $z=z'+z''$ such that $\supp(z')\subset \supp(y)$ and $\supp(z'')\cap \supp(y)=\emptyset.$ It is easy to see that $z',z''\in \R^n_+.$ Because $NA\Diag{y}=0,$ we have that $NA z'=0$ and so $NA (\sqrt{z''}\circ \sqrt{z''}) = NA z''=NA z-NA z'=0.$ 
Because $\supp(z'')\cap \supp(y)=\emptyset,$ we have that $y\circ z''=0$ and $MA(y\circ \sqrt{z''})=0.$ Thus, $\sqrt{z''}\in \T_{\PA}(y).$ 
Substituting $\sqrt{z''}$ into (\ref{Oct_28_9}) as ``$h$" and using the fact that $y\circ z''=0,$ we get $\<\nabla \phi(x)-A^\top M^\top \lambda',z''\>\geq 0.$ From $\supp(z')\subset \supp(y)$ and (\ref{Oct_28_1}), we have that $\<\nabla \phi(x)-A^\top M^\top \lambda',z'\>= 0.$ This two results together imply that $\<\nabla \phi(x)-A^\top M^\top \lambda',z\>\geq  0.$ Since this holds for any $z\in \R^n_+$ such that $NAz=0,$ from Farka's lemma \cite[section 6.3]{LNP}, there exists $\lambda''\in \R^{m-r}$ such that 
\begin{equation}\label{23_Sept18_1}
\nabla \phi(x)-A^\top M^\top \lambda'-A^\top N^\top \lambda''\in \R^n_+.
\end{equation}
From (\ref{Oct_28_1}) and $N A \Diag{y}=0,$ we have that $\(\nabla \phi(x)-A^\top M^\top \lambda'-A^\top N^\top \lambda''\)\circ y=0.$ By choosing $\lambda=M^\top \lambda'+N^\top \lambda'',$ we know that (\ref{LCPKKT}) is satisfied.

\medskip

\noindent{\bf Sufficiency.} From (\ref{LCPKKT}), we know that $\(\nabla \phi(x)-A^\top \lambda\)\circ y=0.$ Because $[M;N]$ is full rank, without loss of generality, we write $\lambda=M^\top \lambda'+N^\top \lambda''.$ 
Because $NA\Diag{y}=0,$ we have that $\(\nabla \phi(x)- A^\top M^\top \lambda'\)\circ y=0.$ Therefore, from Lemma~\ref{1stlem}, $y$ is a first-order stationary point of (\ref{LCPH}). For any $h\in \T_{\PA}(y)$ satisfying $\nabla f(y)^\top h=0$ and any $w\in \T^2_{\PA}(y,h),$ we have that
\begin{eqnarray}
&& \hspace{-7mm}\<\nabla \phi(x),h\circ h+y\circ w\>=\<\nabla \phi(x),h\circ h\>+\< \( A^\top M^\top\lambda'\)\circ y,w \> 
\nonumber\\
&& \hspace{-7mm}=\<\nabla \phi(x),h\circ h\>+\< \lambda',MA\(y\circ w\) \>
\nonumber \\
&&\hspace{-7mm}=\<\nabla \phi(x),h\circ h\>-\< \lambda',MA\(h\circ h\) \>-\< \lambda'',NA\(h\circ h\) \>\\
&&\hspace{-7mm}=\<\nabla \phi(x)-A^\top \lambda,h\circ h\>\geq 0,\qquad
\label{Oct_29_1}
\end{eqnarray}
where the first equality comes from (\ref{Oct_28_1}), the third equality comes from (\ref{PAtcone}) and (\ref{PA2tset}). 
Note that the last inequality holds because $\nabla \phi(x) - A^\top\lambda \geq 0.$
From (\ref{Oct_28_6}) and $A(y\circ h)=0$, we have that $(y\circ h)^\top \nabla^2 \phi(x)(y\circ h)\geq 0$. This together with (\ref{Oct_29_1}) implies that
\begin{equation}\label{Oct_29_3}
\nabla f(y)^\top w+h^\top \nabla^2 f(y)h=2\<\nabla \phi(x),h\circ h+y\circ w\>+4(y\circ h)^\top \nabla^2 \phi(x)(y\circ h)\geq 0.
\end{equation}
Therefore, $y$ is a second-order stationary point of (\ref{LCPH}).
\end{proof}

{The above two lemmas together imply the following result about the non-convex landscape of (\ref{LCPH}).}

\begin{proposition}\label{2ndprop}
If $y\in \PA$ is a second-order stationary point of (\ref{LCPH}), then $x=y\circ y$ is a first-order stationary point of (\ref{LCP}).
\end{proposition}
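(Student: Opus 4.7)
The plan is to derive this proposition as essentially an immediate corollary of Lemma~\ref{2ndlem}. The key observation is that the three conditions in (\ref{LCPKKT}) that appear in Lemma~\ref{2ndlem} are \emph{exactly} the first-order stationarity conditions for (\ref{LCP}) as given in Definition~\ref{LCP1stKKT}. So once we invoke the necessity direction of Lemma~\ref{2ndlem}, we obtain a multiplier $\lambda \in \R^m$ witnessing precisely what is needed for (\ref{LCP}).

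First, I would fix $y \in \PA$ a second-order stationary point of (\ref{LCPH}), pick a regular representation $(M,N,r)$ of $y$ (as per Definition~\ref{Regrep}), and set $x = y \circ y$. Primal feasibility $Ax = b$ with $x \in \R^n_+$ is immediate: $A(y\circ y) = b$ holds because $y\in \PA$, and $x = y\circ y \geq 0$ componentwise. Next, applying the necessity direction of Lemma~\ref{2ndlem} to $y$ produces a vector $\lambda \in \R^m$ such that the three conditions in (\ref{LCPKKT}) hold at $x$, namely primal feasibility, dual feasibility $\nabla \phi(x) - A^\top \lambda \in \R^n_+$, and complementarity $(\nabla \phi(x) - A^\top \lambda) \circ x = 0$.

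Matching these to Definition~\ref{LCP1stKKT}, we see that $x$ satisfies exactly the first-order necessary conditions of (\ref{LCP}) with multiplier $\lambda$, hence $x$ is a first-order stationary point of (\ref{LCP}).

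There is really no substantive obstacle, since all the technical work — stratifying the argument into the primal, dual, and complementarity pieces, and in particular invoking Farka's lemma to build the component $\lambda''$ of the multiplier orthogonal to $\Ran[A\Diag{y}]$ — has already been carried out inside the necessity portion of Lemma~\ref{2ndlem}. The only thing to emphasize in writing the proof is that the second-order inequality (\ref{Oct_28_6}) is not needed at all for this conclusion; only the first-order piece (\ref{LCPKKT}) extracted from Lemma~\ref{2ndlem} is used. In particular, the proposition is really a landscape statement: second-order stationarity in the parametrized formulation is strong enough to produce a valid KKT-type multiplier for the original convex-polyhedral problem, bypassing any constraint qualification on (\ref{LCPH}).
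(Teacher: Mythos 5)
Your proposal is correct and follows essentially the same route as the paper: the paper's own proof is a one-line invocation of the necessity direction of Lemma~\ref{2ndlem}, which supplies a multiplier $\lambda$ satisfying (\ref{LCPKKT}), and these are precisely the conditions of Definition~\ref{LCP1stKKT}. Your additional remarks (that primal feasibility is immediate from $y\in\PA$ and that the second-order inequality (\ref{Oct_28_6}) plays no role in this corollary) are accurate but do not change the argument.
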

\begin{proof}
From Lemma~\ref{2ndlem}, we know that $x$ satisfies (\ref{LCPKKT}) in Definition~\ref{LCP1stKKT}. Thus we know that $x$ is a first-order stationary point of (\ref{LCP}). 
\end{proof}

\begin{remark}\label{w2ndrem}
Note that from Lemma~\ref{2ndlem}, a second-order stationary point of (\ref{LCPH}) is exactly a {\bf weak} second-order critical point of (\ref{LCP}) that was introduced by N. {Gould} \cite[subsection 4.2]{gould2006introduction}. In the next subsection, we illustrate how to verify the first-order and second-order optimality conditions.
\end{remark}
\subsection{Verifying optimality conditions}\label{subsec-verify}
Given a vector $y\in \R^n,$ its primal feasibility can be quickly confirmed by evaluating 
$\|A (y\circ y)-b\|$ to check if it is (numerically) zero. Verifying condition (\ref{Oct_28_6}) boils down to assessing the positive definiteness of a matrix with dimension $|\supp(y)|\times |\supp(y)|$, a task that can be performed in polynomial time. The more challenging aspects lie in the first-order condition (\ref{Oct_28_1}), dual feasibility, and complementarity of (\ref{LCPKKT}), primarily because the multiplier $\lambda$ is unknown. 
Remarkably, if  $y\in \PA$ is smooth\footnote{In this case, one can choose $M=I$ and {condition} (\ref{Oct_28_1}) is equivalent to the complementarity.}, then the linear system in the complementarity from (\ref{LCPKKT}) yields a unique least-squares solution $\lambda$ and one only have to check the dual feasibility using the multiplier $\lambda$. However, when $y$ is singular, the system {has} multiple least-squares solutions, which makes the determination of the correct $\lambda$ far from being straightforward. Actually, a method for recovering the multiplier is hidden within the proofs of Lemma~\ref{1stlem} and \ref{2ndlem}, which guide us to partition the multiplier into two components and to recover them sequentially.

Given $y\in \PA$ and $x=y\circ y$, from Lemma~\ref{LDL},  we have the LDL decomposition $A\Diag{x}A^\top=L\DD(d)L^\top,$ where $d\in \R^m_+$ and $L\in \R^{m\times m}$ is a lower triangular matrix such that $\dd(L)=e$. This computation can be executed using the {\sc Matlab} function $\texttt{ldl}.$ { It is worth noting that the {\sc Matlab} function $\texttt{ldl}$ typically performs a permutation on the matrix to reduce fill-ins. But 
this won't affect our manifold decomposition because the permutation only changes the order of affine constraints in $Ax=b,$ which is not related to the geometry of $\PA.$} It's worth noting that $y$ is regular if and only if $A\Diag{x}A^\top$ is positive definite, which is equivalent to $d>0$. 

\bigskip

\noindent{\bf Verifying the first-order stationarity.}

\medskip

\noindent From Lemma~\ref{1stlem}, we know that checking the first-order stationarity of $y$ is equivalent to checking that the orthogonal  projection of $\phi(x)\circ y$ onto 
 $\Ker[A\Diag{y}]$ is zero. The projection has the following explicit formula
\begin{equation}\label{eqproj}
\P_{\Ker[A\Diag{y}]} \(\nabla\phi(x)\circ y\)=\nabla\phi(x)\circ y-\Diag{y}A^\top \lambda,
\end{equation}
such that $\lambda\in \R^m$ is a solution of the following positive semidefinite linear system
\begin{equation}\label{linsys}
L\DD(d)L^\top \lambda=A\Diag{x}A^\top \lambda=A\Diag{y} (\nabla \phi(x)\circ y),
\end{equation}
which has a solution because the right hand side lies in the range space of $A\Diag{y}.$ The above linear system has a unique solution if and only if $y$ is regular. One particular solution of (\ref{linsys}) is given by
\begin{equation}\label{linsoln}
\lambda'=L^{-\top }\DD(d)^+L^{-1}A\Diag{y} (\nabla \phi(x)\circ y),
\end{equation}
where $\DD(d)^+$ is the pseudoinverse of $\DD(d)$ by taking the reciprocal of the nonzero diagonal elements. The solution set of (\ref{linsys}) is given as follows:
\begin{equation*}\label{linsolnset}
{\rm Soln}(y):=\left\{ \lambda'+\lambda'':\ L\DD(d)L^\top\lambda''=0,\ \lambda''\in \R^m \right\}.
\end{equation*}

\medskip

\noindent{\bf  Verify complementarity and dual feasibility.}

\medskip

\noindent 
From (\ref{eqproj}) and (\ref{linsys}), we know that if the first-order necessary condition holds, then the solution set ${\rm Soln}(y)$ is the set of multipliers satisfying the complementarity (\ref{LCPKKT}). The remaining task is to check whether there exists a multiplier inside ${\rm Soln}(y)$ satisfying the dual feasibility (\ref{LCPKKT}), which is equivalent to checking whether the optimal value of the following optimal problem is zero:
\begin{equation}\label{drecov0}
\min\left\{ \frac{1}{2} \left\|\P_{\R^n_-}\(\nabla \phi(x)-A^\top\lambda'-A^\top \lambda''\)\right\|^2
:\ L\DD(d)L^\top\lambda''=0,\ \lambda''\in \R^m \right\}.
\end{equation}
From the definition of regular representation $(M,N,r)$ in (\ref{Regrep}), one particular choice of $(M,N)$ is $M=I_{\supp(d),:}L^\top$ and $N=I_{[m]\setminus \supp(d),:}L^{-1}.$  Problem (\ref{drecov0}) can be further simplified as
\begin{equation}\label{drecov}
\min\left\{\frac{1}{2}\left\| \P_{\R^n_-} \(\nabla \phi(x)-A^\top\lambda'-A^\top N^\top \mu\)\right\|^2 \;:\ \mu\in \R^{m-r} \right\},
\end{equation}
whose dual problem is as follows:
\begin{equation}\label{drecovd}
\max\left\{ -\< \nabla \phi(x)-A^\top\lambda',{z} \>-\frac{1}{2}\|{z}\|^2:\  NA{z}=0,\ {z}\in \R^n_+ \right\}.
\end{equation}
Problem (\ref{drecov}) is an unconstrained optimization problem whose objective function is convex and Lipschitz-continuously differentiable. We can use the (regularized) semi-smooth Newton method in \cite{li2020efficient} to solve it efficiently. Notably, in practice, $m-r$ is usually much smaller than $m$, so problem (\ref{drecov}) is usually easy to solve. Problem (\ref{drecovd}) is maximizing a strongly-concave function in a convex polyhedron, which can be done by an interior point method in polynomial time. {The following proposition says that when the dual feasibility fails, the solution of (\ref{drecovd}) will give us a descent direction.

\medskip
\begin{proposition}\label{escprop}
Suppose $\lambda'$ is defined as in (\ref{linsoln}) and $z=h\circ h$ is a feasible solution of (\ref{drecovd}) with a positive function value. Let $w$ be the least square solution of the linear system $MA\(y\circ w+h\circ h\)=0$ and $d:=y\circ w+h\circ h.$ We have that $\<\nabla \phi(x),d \><0$ and for $t>0$ sufficient small, $x+t\cdot d\in \mathcal{C}_{A,b}.$
\end{proposition}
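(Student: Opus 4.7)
The plan is to reduce the proposition to two clean observations: first, that $Ad=0$ (so $d$ is a tangent direction to the affine subspace $\{x:Ax=b\}$), and second, that $\<\nabla\phi(x),d\>$ coincides with the dual objective value of (\ref{drecovd}) at $z$. Once these are in place, both conclusions follow directly.

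First I would note that $MA\Diag{y}$ has full row rank by the definition of the regular representation, so the linear system $MA(y\circ w+h\circ h)=0$ is consistent and its least-squares solution $w$ in fact satisfies the equation exactly. I would then verify $Ad=0$ component-by-component through the decomposition $[M;N]$. The $M$-block $MAd=0$ is exactly the defining equation of $w$. For the $N$-block, $NAd=NA(y\circ w)+NA(h\circ h)=0$, because $NA\Diag{y}=0$ (part of the regular representation) kills the first term while $NA(h\circ h)=0$ is the feasibility of $z=h\circ h$ in (\ref{drecovd}). Since $[M;N]$ is invertible, $Ad=0$; equivalently $A(y\circ w)=-A(h\circ h)$.

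Next I would compute $\<\nabla\phi(x),d\>$. The proposition is invoked inside the dual-feasibility check, so the first-order stationarity has already been verified, and (\ref{eqproj})--(\ref{linsys}) together with Lemma~\ref{1stlem} give $\nabla\phi(x)\circ y=\Diag{y}A^\top\lambda'$ for the $\lambda'$ in (\ref{linsoln}). Therefore
\begin{align*}
\<\nabla\phi(x),d\>
&=\<\nabla\phi(x)\circ y,w\>+\<\nabla\phi(x),h\circ h\>\\
&=\<\lambda',A(y\circ w)\>+\<\nabla\phi(x),h\circ h\>\\
&=-\<\lambda',A(h\circ h)\>+\<\nabla\phi(x),h\circ h\>\\
&=\<\nabla\phi(x)-A^\top\lambda',z\>,
\end{align*}
where the third equality uses $A(y\circ w)=-A(h\circ h)$ from the previous step. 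Since the objective value of (\ref{drecovd}) at $z$ is positive, $-\<\nabla\phi(x)-A^\top\lambda',z\>-\tfrac{1}{2}\|z\|^2>0$, which forces $\<\nabla\phi(x),d\>\leq -\tfrac{1}{2}\|z\|^2<0$ (note that $z\neq 0$, since $z=0$ would yield a zero objective).

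For the feasibility claim, $A(x+td)=b+tAd=b$ is immediate from $Ad=0$. For non-negativity I split coordinates along $\supp(y)$. If $i\notin\supp(y)$, then $y_i=0$ so $x_i=0$ and $d_i=h_i^2$, giving $(x+td)_i=th_i^2\geq 0$. If $i\in\supp(y)$, then $x_i=y_i^2>0$, so $(x+td)_i>0$ for every $t$ smaller than a positive threshold depending on $x_i$ and $d_i$; taking the minimum of the finitely many thresholds yields a uniform $t_0>0$ with $x+td\in\mathcal{C}_{A,b}$ for all $t\in(0,t_0]$. The main obstacle, modest though it is, lies in recognizing that the $N$-component of $Ad$ vanishes automatically: the construction of $w$ only addresses the $M$-component, and one must combine the dual feasibility of $z$ with the identity $NA\Diag{y}=0$ to weave the two separately defined objects $w$ and $h$ into a genuinely feasible descent direction.
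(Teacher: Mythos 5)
Your proof follows the same overall route as the paper's: establish $Ad=0$ blockwise through the $[M;N]$ decomposition, reduce $\<\nabla \phi(x),d\>$ to the quantity $\<\nabla \phi(x)-A^\top\lambda',z\>$ controlled by the (positive) dual objective value, and verify nonnegativity of $x+t\cdot d$ coordinatewise along $\supp(y)$. The $Ad=0$ argument and the feasibility argument are correct and essentially identical to the paper's.

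The one genuine issue is in the middle step. To pass from $\<\nabla\phi(x)\circ y,w\>$ to $\<\lambda',A(y\circ w)\>$ you invoke $\nabla\phi(x)\circ y=\Diag{y}A^\top\lambda'$, justified by saying that first-order stationarity of $y$ "has already been verified." But first-order stationarity is not among the hypotheses of Proposition~\ref{escprop}, so as written your argument establishes a weaker statement than the one asserted (and the paper's proof does not need this assumption). What you actually need is only $\<\nabla\phi(x)\circ y-\Diag{y}A^\top\lambda',\,w\>=0$, and this holds unconditionally: by (\ref{eqproj}), $\nabla\phi(x)\circ y-\Diag{y}A^\top\lambda'=\P_{\Ker[A\Diag{y}]}\(\nabla\phi(x)\circ y\)\in\Ker[A\Diag{y}]$, while the least-squares (minimum-norm) solution $w$ of $MA\Diag{y}w=-MA(h\circ h)$ lies in $\Ran[\Diag{y}A^\top M^\top]\subseteq\Ran[\Diag{y}A^\top]=\Ker[A\Diag{y}]^\perp$, so the two vectors are orthogonal. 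This is the real content of the "least square solution" hypothesis on $w$ --- you used it only to get consistency and exactness of the linear system, but its essential role is to place $w$ in the row space of $A\Diag{y}$ so that the cross term vanishes. With that substitution your computation goes through verbatim and the remainder of your argument stands.
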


\begin{proof}
From (\ref{linsoln}), $\lambda'$ is the least square solution of the linear system (\ref{linsys}). This implies that $\lambda'=M^\top \gamma$ for some $\gamma\in \R^r.$ We have that
\begin{multline}
\< \nabla \phi(x),h\circ h+y\circ w \>=\< \nabla \phi(x)-A^\top M^\top \gamma,h\circ h+y\circ w \>\\
=\< \nabla \phi(x)-A^\top \lambda',z \>+\< \(\nabla \phi(x)-A^\top \lambda'\)\circ y, w \>=\< \nabla \phi(x)-A^\top \lambda',z \><0,
\end{multline}
where the first equality comes from $MA\(y\circ w+h\circ h\)=0,$ the third equality comes from (\ref{eqproj}) and the fact that $w$ is the least square solution of the linear system $MA\(y\circ w+h\circ h\)=0,$ which implies that $w\in {\rm Ker}[AD_y]^\perp$ and the inequality follows from that the objective function value of (\ref{drecovd}) at $z$ is positive. Because $MA\(y\circ w+h\circ h\)=0,$ $NA D_y=0$ and $NA\(h\circ h\)=0,$ we have that $Ad=0$ and so $A(x+t\cdot d)=b$ for any $t\in \R.$ Because $x=y\circ y$ and $h\circ h\geq 0,$ it is easy to see that when $t>0$ is sufficiently small, $x+t\cdot d\in \R^n_+.$ Therefore, $d$ is a descent direction. 
\end{proof}

} 

{
\begin{remark}\label{donotuse}
In practice, we can use the method mentioned in this Subsection~\ref{subsec-verify} to recover the Lagrangian multiplier and check the KKT conditions. In the next section, we will design a hybrid algorithm combining Riemannian optimization and projected gradient method where the latter can also be used to recover the Lagrangian multiplier. Although the projected gradient step requires solving a larger problem ($m$ dimentional) compared to (\ref{drecov}), the Riemannian gradient descent steps ensure that we won't compute the projected gradient step too frequently. We will elaborate this in detail in the next section. In our implementation, we don't actually use the method in the current subsection to verify the optimality conditions because the Lagrangian multiplier coming from projected gradient step can already {be} used to verify the optimality conditions. {One} interesting future direction is to design a Riemannian optimization method without projected gradient step, where the method in Subsection~\ref{subsec-verify} is applied.
\end{remark}
}

This section has delved into the theoretical analysis of problem (\ref{LCPH}), covering both the conditions for optimality and the methods for their verification. In the subsequent section, we will study the manifold decomposition of $\PA$ which is the basis for our later algorithmic design.

\section{Manifold decomposition of $\PA$}\label{sec-manidcmp}

\subsection{Manifold decomposition based on regular representation}\label{subsec-regdecomp}

We define the following {equivalence} {relation} on $\PA.$
\begin{definition}\label{equidef}
Two points $y_1,y_2\in \PA$ are considered equivalent, denoted as $y_1\sim y_2,$ if they share the same regular representation. For any $y\in \PA,$ define the {equivalence} class $\M(y):=\left\{z\in \PA:\ z\sim y\right\}.$
\end{definition}

We have the following result about the decomposition of $\PA.$

\begin{proposition}\label{equitheo}
There are finitely many  {equivalence} classes $\M(y)$ in $\PA,$ each of which is a Riemannian manifold in $\R^n$ adopting the Euclidean metric $\<\cdot,\cdot\>$ with the following formula
\begin{equation}\label{submani}
\M(y)=\left\{ z\in \R^n:\ MA (z\circ z)=Mb,\ \textrm{rank}\(MA\Diag{z}\)=r,\ \supp(z)\subset \I \right\},
\end{equation}
where $\I:=\cup_{z\sim y} \supp(z).$ In addition, $\M(y)\cap \R^n_+$ is connected. 
\end{proposition}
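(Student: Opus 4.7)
The proposition has three parts: finiteness of the quotient, the explicit formula (\ref{submani}) (which determines the manifold structure), and connectedness of the nonnegative slice. Finiteness is immediate, since the equivalence class of $y$ is determined by the subspace $\Ran[A\Diag{y}]\subset \R^m$, which depends only on $\supp(y)$; since $[n]$ has finitely many subsets, only finitely many such subspaces can occur.

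The bulk of the work is to establish (\ref{submani}). The inclusion ``$\subset$'' is routine: if $z\sim y$, then $(M,N,r)$ is also a regular representation of $z$, so $\rr(MA\Diag{z})=r$ and $NA\Diag{z}=0$; combined with $A(z\circ z)=b$ and $Nb=0$ this yields $MA(z\circ z)=Mb$, and $\supp(z)\subset \I$ holds by the definition of $\I$. For the reverse inclusion the key identity is
\begin{equation*}
\Ran[A_{:,\I}]=\Ran[M^\top],
\end{equation*}
which I would prove by noting that every column $A_i$, $i\in\I$, lies in some $\Ran[A\Diag{z'}]=\Ran[M^\top]$ with $z'\sim y$, while the reverse containment follows from $\supp(z')\subset \I$ for every $z'\sim y$. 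Granting this, any $z$ satisfying the right-hand side of (\ref{submani}) has $\Ran[A\Diag{z}]\subset \Ran[M^\top]$ (from $\supp(z)\subset \I$), and the rank-$r$ hypothesis together with $\dim \Ran[M^\top]=r$ promotes this inclusion to equality. Consequently $NA\Diag{z}=0$ (since $NM^\top=0$), so $NA(z\circ z)=0=Nb$; combining with $MA(z\circ z)=Mb$ and the full rank of $[M;N]$ yields $A(z\circ z)=b$, so $z\in\PA$ with regular representation $(M,N,r)$, hence $z\sim y$.

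To obtain the manifold structure I would restrict to the coordinate subspace $E_{\I}:=\{z\in\R^n:\supp(z)\subset\I\}\cong \R^{|\I|}$ and view $\M(y)$ as the zero set of the smooth map $F:E_{\I}\to\R^r$, $F(z)=MA(z\circ z)-Mb$, intersected with the open subset $\{z:\rr(MA\Diag{z})=r\}$ (open because $r$ is the maximal possible rank of an $r$-rowed matrix). The Jacobian $h\mapsto 2MA(z\circ h)$ has rank $\rr(MA\Diag{z})=r$ on this open set, so $F$ is a submersion there, and the regular value theorem yields a smooth embedded submanifold of $\R^n$ of dimension $|\I|-r$ equipped with the induced Euclidean metric. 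For connectedness of $\M(y)\cap \R^n_+$, pick finitely many $z_1,\ldots,z_k\in\M(y)\cap \R^n_+$ whose supports cover $\I$, form $\bar{x}:=\frac{1}{k}\sum_{i=1}^k z_i\circ z_i$, and set $\bar{z}:=\sqrt{\bar{x}}$; then $\supp(\bar{z})=\I$ and $\bar{z}\in \M(y)\cap \R^n_+$. Any other $z\in \M(y)\cap \R^n_+$ is joined to $\bar{z}$ by the continuous path $t\mapsto \sqrt{(1-t)(z\circ z)+t\bar{x}}$, which stays in $\R^n_+$, preserves the affine constraint, and (using the key identity) remains in $\M(y)$ throughout. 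The main obstacle will be the reverse inclusion in (\ref{submani}): the rank condition alone must be shown to upgrade the a priori containment $\Ran[A\Diag{z}]\subset \Ran[M^\top]$ to equality, and the aggregate identity $\Ran[A_{:,\I}]=\Ran[M^\top]$ is precisely what makes this possible.
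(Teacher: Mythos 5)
Your proof is correct and follows essentially the same route as the paper's: the key fact $\Ran[A_{:,\I}]\subset\Ran[M^\top]$ (equivalently $NA_{:,\I}=0$), which you derive column-by-column and the paper derives via a convex combination $w\in{\rm conv}\{z\circ z: z\in\M(y)\}$ with $\supp(w)=\I$, the LICQ/submersion argument on the coordinate subspace $E_\I$, and square roots of convex combinations of the $z\circ z$ for connectedness. The only cosmetic difference is that the paper joins two arbitrary points of $\M(y)\cap\R^n_+$ directly by $t\mapsto\sqrt{ty_1\circ y_1+(1-t)y_2\circ y_2}$ rather than routing through a full-support hub.
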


\begin{proof}
Consider $y\in \PA,$ with regular representation $(M,N,r)$ and $x=y\circ y.$ From  Definition~\ref{Regrep}, $y_1\sim y_2$ if $\supp(y_1)=\supp(y_2).$ This implies that there are at most $2^n$ {equivalence} classes in $\PA.$ Then we have the following result:
\begin{equation}\label{submani0}
\M(y)\subset \left\{ z\in \R^n:\ MA (z\circ z)=Mb,\ {\rm rank}\(MA\Diag{z}\)=r,\ \supp(z)\subset \I \right\}.
\end{equation}
Moreover, for any $w\in {\rm conv}\left\{ z\circ z:\ z\in \M(y) \right\},$ from
{the} linearity of the operator $x\rightarrow A\Diag{x},$ we have that $N A \Diag{w}=0$ and $\rr(A\Diag{w})\geq r.$ Thus, $\sqrt{w}$ also has the regular representation $(M,N,r).$ Because there exists $w\in {\rm conv}\left\{ z\circ z:\ z\in \M(y) \right\}$ such that $\supp(w)=\I,$ we have that for any $z\in \PA$ such that $\supp(z)\subset \I,$ $N A\Diag{z}=0.$ Therefore, we have the following result
\begin{equation}\label{submani1}
\left\{ z\in \R^n:\ MA (z\circ z)=Mb,\ {\rm rank}\(MA\Diag{z}\)=r,\ \supp(z)\subset \I \right\}\subset \M(y).
\end{equation}
Combining (\ref{submani0}) and (\ref{submani1}), we have that (\ref{submani}) holds. From (\ref{submani}), $\M(y)$ is the Cartesian product of a submanifold in $\R^\I$ satisfying the LICQ property {everywhere}\footnote{{The LICQ property implies that the neighbourhood of the point is diffeomorphic to an open set in the Euclidean space through the implicit function theorem.}} and the zero-degree manifold $\{ \vec{0} \}$ in $\R^{[n]\setminus \I}.$ Therefore, $\M(y)$ is a Riemannian manifold in $\R^n.$ 

The remaining task is to show that $\M(y)\cap \R^n_+$ is connected. For any $y_1,y_2\in \M(y)\cap \R^n_+,$ from the above analysis, we have that for any $t\in[0,1],$ $\sqrt{ty_1\circ y_1+(1-t)y_2\circ y_2}\in \M(y)\cap \R^n_+.$ Because this continuous curve connects $y_1$ and $y_2,$ $\M(y)\cap \R^n_+$ is connected.
\end{proof}

\begin{remark}\label{remconnec}
Proposition~\ref{equitheo} implies that the intersection of each submanifold and 
$\R^n_+$ is a connected set. Because the signs of each element of $y$ have no influence of the feasibility and function value of (\ref{LCPH}), {the potential disconnectedness of each submanifold cause no issue to our Riemannian optimization algorithm.}
\end{remark}

\subsection{Some examples}\label{subsec-examples}
Since our manifold decomposition is abstract, {we will} use two examples to illustrate it. 

\medskip

\noindent{\bf Simplex.}
Perhaps the simplest example is the simplex $\C_{A,b}:=\left\{x\in \R^n_+:\ {\bf 1}_n^\top x=1\right\}.$ In this case, $\PA:=\left\{y\in \R^n:\ y^\top y=1\right\}$ is a unit sphere that is widely used in Riemannian optimization. {The  simplex to sphere parametrization has been used more than two decades ago in \cite[chapter 4]{helmke2012optimization}.} For any $y\in \PA,$ because it is regular, it has a consistent regular representation $(1,[\;],1).$ Thus, our manifold decomposition only 
consists of a single manifold, which is $\PA$ itself. Actually, we can also decompose $\PA$ according to the support of its elements, which would result in an explicit manifold decomposition. However, if we decompose $\PA$ in this way, the number of submanifolds will be exponentially large because there are in total $2^n-1$ support sets in $\PA.$ Therefore, our decomposition in Definition~\ref{equidef} avoids unnecessary separation of a manifold.

\medskip

\noindent{\bf Birkhoff polytope.} 
Let's consider a more complicated example, which is the following Birkhoff polytope \cite{birkhoff1946three}:
\begin{equation}\label{DST}
\B_n:=\left\{X\in \R^{n\times n}_+:\ X{\bf 1}_n = {\bf 1}_n,\ X^\top {\bf 1}_n = {\bf 1}_n \right\}.
\end{equation}
The Hadamard-parametrized polyhedron of $\B_n$ is as follows:
\begin{equation}\label{DST}
\BH_n:=\left\{Y\in \R^{n\times n}:\ \dd(YY^\top) = {\bf 1}_n,\ \dd(Y^\top Y)_{1:n-1} = {\bf 1}_{n-1} \right\},
\end{equation}
where we have removed one redundant constraint to ensure Assumption~\ref{ass1}. Now we move on to study its manifold decomposition.
For any $Y\in \BH_n,$ we can use a bipartite graph to represent its sparsity pattern.
\begin{definition}\label{bipardefi}
For any $Y\in \BH_n,$ define a bipartite graph $\BG(Y):=\([n],[n],\E\)$ such that for any $(i,j)\in [n]\times [n], $ $(i,j)\in \E$ if and only if $Y_{ij}\neq 0.$ The underlying sparsity pattern of $Y$ is the bi-adjacency matrix of $\BG(Y).$
\end{definition}
With the above definition, we have the following result connecting the smoothness of $Y$ and the connectedness of $\BG(Y).$ We put its proof in the appendix.

\begin{proposition}\label{biparprop}
For any $Y\in \BH_n,$ it is smooth if and only if $\BG(Y)$ is connected.
\end{proposition}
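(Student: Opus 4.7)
The plan is to translate LICQ at $Y$ into a linear condition indexed by the edges of $\BG(Y)$, and then read off the two implications from graph connectedness. First I would compute the $2n-1$ active constraint gradients explicitly: the row constraint $\sum_l Y_{il}^2 = 1$ has gradient $2 e_i (e_i^\top Y)$ (a matrix whose only nonzero row is the $i$th row of $2Y$), while the column constraint $\sum_k Y_{kj}^2 = 1$ for $j \in [n-1]$ has gradient $2(Y e_j) e_j^\top$. Taking a linear combination with coefficients $\alpha \in \R^n$ (rows) and $\beta \in \R^{n-1}$ (columns), the $(k,l)$ entry of the combination equals $2(\alpha_k + \beta_l) Y_{kl}$, with the convention $\beta_n := 0$. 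Hence LICQ fails at $Y$ precisely when there exists nonzero $(\alpha,\beta)$ with $\beta_n = 0$ and $\alpha_k + \beta_l = 0$ for every edge $(k,l) \in \E$ of $\BG(Y)$.

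For the ``connected $\Rightarrow$ smooth'' direction I would use that the relation $\alpha_k + \beta_l = 0$ propagates along paths: once $\alpha_{k_0}$ is fixed at some value $c$, every $\beta_l$ with $l$ adjacent to $k_0$ equals $-c$, every $\alpha_k$ adjacent to such $l$ equals $c$, and so on. Connectedness of $\BG(Y)$ therefore forces a single such constant $c$ on the whole graph; the pinning $\beta_n = 0$ then kills $c$, leaving only the zero solution, so LICQ holds.

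For the converse I would first remark that no vertex of $\BG(Y)$ is isolated, since an empty row or column would make the corresponding sum-of-squares constraint read $0 = 1$; in particular every connected component contains both left and right vertices. If $\BG(Y)$ is disconnected, choose any component $C$ that does not contain the right vertex $n$, which is possible because $n$ lies in a unique component while there are at least two. Setting $\alpha_k = 1$ on the left vertices of $C$, $\beta_l = -1$ on the right vertices of $C$, and every other coordinate to zero, produces a nonzero vector satisfying both $\beta_n = 0$ and the edge condition, so LICQ fails.

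The main obstacle is purely bookkeeping: keeping the row/column asymmetry straight, remembering that dropping the column-$n$ constraint is encoded precisely as the pinning $\beta_n = 0$, and verifying that in the disconnected case a component avoiding $n$ really exists. Once these are handled, the equivalence is essentially the standard fact that the kernel of the signed vertex-edge relation of a bipartite graph is one-dimensional per connected component.
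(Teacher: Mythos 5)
Your proposal is correct and follows essentially the same route as the paper: both reduce smoothness to injectivity of the adjoint Jacobian $(\lambda,\mu)\mapsto \DD(\lambda)Y+Y\DD([\mu;0])$, whose kernel is exactly the set of $(\alpha,\beta)$ with $\beta_n=0$ and $\alpha_k+\beta_l=0$ on every edge, and the disconnected-implies-singular construction (constants $\pm 1$ on a component avoiding the right vertex $n$) is the paper's construction verbatim. The only cosmetic difference is in the converse direction, where you propagate the edge relations along paths (the content of the paper's Lemma~\ref{lemconnec}) instead of the paper's support-partition contradiction; both arguments are sound.
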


Proposition~\ref{biparprop} tells us that determining the singularity of a given point $Y\in \BH_n$ only requires checking the connectedness of its underlying sparsity pattern. Note that this can be done in $\O(n+|\supp(Y)|)$  time by Depth-First Search or Breadth-First Search. This is much cheaper than applying the LDL decomposition to the matrix $A\DD(x)A^\top,$ which has
the complexity of $\O\(n^3\).$ The following proposition constructs the manifold decomposition of $\BH_n$ explicitly. We also put its proof in the appendix.

\begin{proposition}\label{manidecB}
Consider $Y\in \BH_n$ such that $\BG(Y)$ has connected components $\sqcup_{i=1}^k \H_i$ for any $i\in [k]$ such that the vertex set $\V\( \H_i \)=\(\I_i,\J_i\)$ and $[n]=\sqcup_{i=1}^k \I_i=\sqcup_{i=1}^k \J_i.$ Then for any $Z\in \BH_n,$ $Z\in \M(Y)$ if and only if $\BG(Z)$ has the same connected components as $\BG(Y).$ 
\end{proposition}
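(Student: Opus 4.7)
The strategy is to reduce the proposition to a purely graph-theoretic fact about the kernel of a bipartite incidence matrix, via the following translation of Definition~\ref{Regrep}. Since the rows of $M$ and $N$ form bases of $\Ran[A\Diag{Y}]$ and its orthogonal complement, we have $Z\sim Y$ in $\BH_n$ if and only if $\Ran[A\Diag{Y}]=\Ran[A\Diag{Z}]$, equivalently $\Ker[(A\Diag{Y})^\top]=\Ker[(A\Diag{Z})^\top]$. The plan is therefore to show that this kernel is completely determined by the partition of $[n]\sqcup[n]$ into connected components of $\BG(Y)$.

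First I would write the kernel explicitly. Let the rows of $A$ correspond to the $n$ row-sum constraints $R_1,\ldots,R_n$ and the $n-1$ retained column-sum constraints $C_1,\ldots,C_{n-1}$, and adopt the convention $v_{C_n}:=0$. Then
\[
\Ker[(A\Diag{Y})^\top]=\left\{v\in\R^{2n-1}:\ v_{R_i}+v_{C_j}=0 \text{ for every edge }(i,j)\in\BG(Y)\right\}.
\]
Propagating these edge-constraints within a connected component $\H_\ell$ of $\BG(Y)$ with vertex set $(\I_\ell,\J_\ell)$ forces $v_{R_i}$ to take a common value $\alpha_\ell$ on $\I_\ell$ and $v_{C_j}=-\alpha_\ell$ on $\J_\ell$, with $\alpha_{\ell_0}=0$ for the unique component $\H_{\ell_0}$ containing $C_n$. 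The upshot is that $\Ker[(A\Diag{Y})^\top]$ has dimension $k-1$ and is entirely determined by the partition of vertices into components: it is insensitive to which edges sit within each component, so long as every component remains connected.

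With this characterization in hand, both directions follow cleanly. For the reverse direction, if $\BG(Z)$ has the same connected components as $\BG(Y)$, the edge-constraints for $\BG(Z)$ cut out exactly the same subspace as those for $\BG(Y)$, hence $Z\sim Y$ and $Z\in\M(Y)$. For the forward direction, if $Z\in\M(Y)$ the two kernels coincide, and I would argue by contradiction that any discrepancy in components is forbidden. A crossing edge of $\BG(Z)$ between two distinct $\BG(Y)$-components $\H_a\neq\H_b$ would impose an additional linear constraint $v_{R_i}+v_{C_j}=0$ not seen by $\BG(Y)$, killing some $v$ that lies in the $\BG(Y)$-kernel (for instance, take $v=+1$ on the row-vertices and $-1$ on the column-vertices of a component not containing $C_n$, which exists whenever $k\geq 2$, and $0$ elsewhere). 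Conversely, if some $\BG(Y)$-component became disconnected in $\BG(Z)$, the $\BG(Z)$-kernel would acquire an extra free parameter and strictly enlarge, contradicting $\rr(A\Diag{Y})=\rr(A\Diag{Z})$.

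The main obstacle is the careful bookkeeping around the component that contains $C_n$: because $v_{C_n}$ is pinned to zero, that component contributes no free parameter and needs separate treatment both in the dimension count and in the choice of the witness kernel element (one must pick a test component \emph{not} containing $C_n$, which is why the argument is stated for $k\geq 2$, the regular case $k=1$ being already covered by Proposition~\ref{biparprop}). Beyond this, the argument is standard bipartite incidence-matrix linear algebra, very much in the spirit of the proof sketch of Proposition~\ref{biparprop}.
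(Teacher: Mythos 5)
Your proof is correct and follows essentially the same route as the paper's: both identify membership in $\M(Y)$ with equality of $\Ran[A\Diag{Y}]^\perp=\Ker[{\bf J}_Y^*]$, compute that kernel as the space of vectors taking a common value $\alpha_\ell$ (negated on the column side) on each connected component with the component containing the dropped column constraint pinned to zero — this is exactly the paper's Lemma~\ref{lemconnec} applied componentwise — and conclude that this subspace and the component partition determine one another, with your forward direction actually spelling out the injectivity step that the paper leaves implicit. One small correction: in the crossing-edge argument the witness kernel element must be supported on one of the two components actually joined by the crossing edge (whichever of the two does not contain $C_n$), not on an arbitrary component avoiding $C_n$, since otherwise the new edge constraint reads $0+0=0$ and your witness survives.
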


For examples like the simplex and Birkhoff polytope, we are able to construct the manifold decomposition explicitly due to their special structures. However, this analysis is difficult to be extended to a general convex polyhedron. In the next section, we will introduce a globally convergent Riemannian optimization method for solving the general problem (\ref{LCP}). Specifically, details like the regular representation $(M,N,r)$, the maximal support $\I$ of $\M(y)$ as in (\ref{submani}), and the total number of submanifolds are not explicitly required for our algorithm's operations.

\section{Algorithm}\label{Sec-alg}
In this section, we will consider finding a first-order stationary point of (\ref{LCP}) by solving (\ref{LCPH}) as an auxiliary problem. Our strategy is to apply manifold decomposition to $\PA,$ mentioned in the previous section and employ Riemannian optimization techniques on the submanifolds. Detailed algorithmic design and convergence analysis will be presented in the subsequent subsections. Throughout this section, we let $(M,N,r)$ be a regular representation of $y$ and $x=y\circ y.$
\subsection{Riemannian gradient descent (RGD)}
Suppose we have some iterate $y\in \PA,$ which is also on the submanifold $\M(y)$ whose explicit formula might be unknown. When applying Riemannian gradient descent, we need {to compute the Riemannian gradient and retraction. Here we introduce some basic material on Riemannian optimization.} For more detailed information about Riemannian optimization, please refer to the textbooks \cite{intromani,absil2008optimization}.

\bigskip

\noindent{\bf Tangent space.} Consider $y\in \PA$ with regular representation $(M,N,r),$ $x=y\circ y$ and $\I:=\cup_{z\in \M(y)} \supp(z)$ to be the maximal support. Because of the LICQ property, the tangent space of $\M(y)$ at $y$ is given as follows:
\begin{equation}\label{tansp}
{\rm T}_y \M(y)=\left\{ h\in \R^n:\ MA (y\circ h)=0,\ \supp(h)\subset \I \right\}.
\end{equation}
{The collection of all the tangent spaces $\cup_{z\in \M(y)} \{(z,h) \,:\, h\in {\rm T}_z \M(y)\}$ is known as the tangent bundle and is denoted as ${\rm T}\M(y).$}
With the manifold decomposition structure of $\PA,$ we will discuss our Riemannian optimization approach in the following subsections.

\medskip

\noindent{\bf Riemannian gradient.} Because $\supp\(2\nabla \phi(x)\circ y\)\subset \supp(y)\subset \I$ and $\Ker[MA\Diag{y}]=\Ker[A\Diag{y}],$ we get the following formula:
\begin{eqnarray}
\g_{\M(y)} f(y) & :=& \P_{{\rm T}_y \M(y)}\( 2\nabla \phi(x)\circ y \)=\P_{\Ker[A\Diag{y}]}\( 2\nabla \phi(x)\circ y \)
\nonumber\\
&=& 2\(\nabla \phi(x)\circ y-\Diag{y}A^\top \lambda\),
\label{projeq}
\end{eqnarray}
where $\lambda$ is any solution of the linear system (\ref{linsys}) with one particular choice to be (\ref{linsoln}). From (\ref{projeq}) and Lemma~\ref{1stlem}, we have the following proposition.

\begin{proposition}\label{rgrad}
For any $y\in \PA,$ it is a first-order stationary point if and only if $\g f(y)=0,$ which is further equivalent to the complementarity with the multiplier $\lambda$ coming from the linear system (\ref{linsys}).
\end{proposition}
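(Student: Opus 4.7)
The plan is to chain together the two identities already displayed in equation (\ref{projeq}) with the algebraic reformulation of first-order stationarity given in Lemma~\ref{1stlem}. Specifically, Lemma~\ref{1stlem} tells us that $y\in\PA$ is a first-order stationary point of (\ref{LCPH}) if and only if the Euclidean projection $\P_{\Ker[A\Diag{y}]}(\nabla\phi(x)\circ y)$ vanishes, while (\ref{projeq}) identifies $\g_{\M(y)} f(y)$ with exactly twice this projection. Hence the first equivalence $y\text{ is first-order stationary}\iff \g f(y)=0$ should follow immediately, and the bulk of the argument reduces to justifying why the tangent-space projection onto ${\rm T}_y\M(y)$ coincides with the projection onto $\Ker[A\Diag{y}]$ (this uses that $\supp(2\nabla\phi(x)\circ y)\subset\supp(y)\subset\I$, together with (\ref{tansp})).

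For the second equivalence, the idea is to unpack the formula
\begin{equation*}
\g_{\M(y)} f(y) = 2\bigl(\nabla\phi(x)\circ y - \Diag{y} A^\top \lambda\bigr),
\end{equation*}
where $\lambda$ is any solution of (\ref{linsys}). Setting $\g f(y) = 0$ gives $(\nabla\phi(x) - A^\top\lambda)\circ y = 0$, which is precisely the complementarity condition in (\ref{LCPKKT}). Conversely, if complementarity holds with some $\lambda$ from (\ref{linsys}), then $\nabla\phi(x)\circ y = \Diag{y} A^\top\lambda \in \Ran[\Diag{y}A^\top]$, so the projection onto $\Ker[A\Diag{y}]$ is zero and hence $\g f(y)=0$.

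One small subtlety I would want to address carefully is that the linear system (\ref{linsys}) may admit multiple solutions $\lambda$ when $y$ is singular (i.e., when $r<m$), so one should verify that $\Diag{y} A^\top \lambda$ is independent of the choice of solution. This is immediate because any two solutions of (\ref{linsys}) differ by an element of $\Ker[A\Diag{x}A^\top]=\Ker[\Diag{y}A^\top]^\perp{}^{\perp}$, and in particular $\Diag{y}A^\top(\lambda_1-\lambda_2)=0$ whenever $A\Diag{x}A^\top(\lambda_1-\lambda_2)=0$ (by taking the inner product with $\lambda_1-\lambda_2$). I expect this well-definedness check to be the only point requiring genuine care; the rest is bookkeeping that directly invokes (\ref{projeq}) and Lemma~\ref{1stlem}.
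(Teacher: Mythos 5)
Your argument is correct and follows essentially the same route as the paper, which derives this proposition directly from the projection formula (\ref{projeq}) together with Lemma~\ref{1stlem}. The well-definedness check that $\Diag{y}A^\top\lambda$ does not depend on the choice of solution of (\ref{linsys}) (via $(\lambda_1-\lambda_2)^\top A\Diag{x}A^\top(\lambda_1-\lambda_2)=\|\Diag{y}A^\top(\lambda_1-\lambda_2)\|^2=0$) is a worthwhile addition that the paper leaves implicit.
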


\noindent{\bf Retraction.} Apart from the Riemannian gradient provided in (\ref{projeq}), we also need a retraction mapping to ensure that the next iterate is also on the current manifold (see 3.6 of \cite{intromani}). A retraction ${\Re:} {{\rm T} \M(y)}\rightarrow \M(y)$ is a smooth mapping such that for any $z\in \M(y)$ and $h\in {\rm T}_z \M(y)$, it satisfies
\begin{itemize}
\item $\Re(z,0)=z$, ${\rm D}\Re(z,0)[h]=h$ for any $h\in {\rm T}_z \M(y).$
\end{itemize}
Notably, a retraction mapping is not unique. For specific manifolds such as the unit sphere and Stiefel manifold \cite[chapter 7]{intromani}, there {exist} retraction mappings with analytical formulas. For a general manifold defined from equality constraints with LICQ property, we can use the Newton retraction proposed by Zhang in \cite{zhang2020newton} to compute the retraction iteratively. In detail, the Newton retraction for {$\Re(y,-t\cdot \g f(y))$} is the Gauss-Newton method applied to solve the nonlinear system $M A (z\circ z)=Mb$ with the initial point set to be 
{$y-t\, \g f(y).$} Suppose $z_k$ is the $k$th iteration of the Gauss-Newton method, then 
$z_{k+1}$ is updated as follows: 
\begin{multline}\label{retraceq}
2MA \DD(z_k\circ z_k)A^\top M^\top \lambda_k=Mb-MA (z_k\circ z_k),\ z_{k+1}=z_k+\DD(z_k)A^\top M^\top \lambda_k.
\end{multline}
If $\supp(z_k)\subset \supp(y),$ we also have that $\supp(z_{k+1})\subset \supp(y).$ From (\ref{projeq}), we have that $\supp\( y+t\cdot \g f(y) \)\subset \supp(y).$ Therefore, the Gauss-Newton method preserves the zero entries of $y.$ 

Because $\supp(z_k)\subset \supp(y),$ we have that $N A\DD(z_k)=0$ and $Nb=0.$ Similar to the  (\ref{linsys}), by using the LDL decomposition, $A\DD(z_k\circ z_k)A^\top=L_k\DD(d_k)L_k^\top,$ the Newton retraction iteration can be equivalently written as 
\begin{equation}\label{retraceq1}
2L_k\DD(d_k)L_k \lambda_k=b-A (z_k\circ z_k),\ z_{k+1}=z_k+\DD(z_k)A^\top  \lambda_k,
\end{equation}
where one particular choice of $\lambda_k$ is $L_k^{-\top }\DD(d_k)^{+}L_k^{-1}\(b-A (z_k\circ z_k)\)/2.$

\begin{remark}\label{remretrac}
Equations (\ref{projeq}) and (\ref{retraceq1}) indicate that we can calculate the projection and retraction mappings without the knowledge of the maximal support $\I$ or the regular representation $(M,N,r)$. This representation is concealed within the LDL decomposition of $A\DD(z_k\circ z_k)A^\top$. Importantly, when the stepsize is sufficiently small, the preservation of nonzero entries ensures that the new point remains on the current submanifold.

It is crucial to note that Newton retraction is conceptually the limit point of the Gauss-Newton method. Attaining this limit point exactly in practice is generally not feasible due to the presence of a nonzero residual. However, given Gauss-Newton method's local quadratic convergence and the proximity of points on the tangent space to the manifold before retraction, the residual of the Newton retraction can typically reach the machine precision in just a few iterations. This means that we can practically consider the iterate as the limit point. In our tests, even setting the residual tolerance of the retraction to values like $10^{-8}$, which is far from the machine precision, did not pose any numerical issues for our algorithm.

While we acknowledge there is a slight discrepancy between the theory developed and practical implementation due to our use of the (inexact) Newton retraction, we believe it might be overly cautious to limit ourselves to straightforward manifolds such as spheres or orthogonal matrices with explicit retractions. Exploring the convergence behavior of Riemannian optimization methods with inexact retraction mappings is a promising avenue for future research.
\end{remark}

With the Riemannian gradient and retraction mapping, we state and prove the following proposition on the sufficient decrease of function value.

\begin{proposition}\label{gdprop}
Let $\delta\in (0,1)$ be a given constant.
For any $y\in \PA$ such that $\g f(y)\neq 0,$ there exists $\beta_{y,1},\ c_{y,1}>0$ such that for any $z\in \M(y)\cap B_{\beta_{y,1}}(y)$,
\begin{equation}\label{grades}
{\rm RGD:}\ f\( \Re\(z,-t \g f(z)\) \)\leq f(z)-t\cdot\delta\| \g f(z) \|^2,\ \forall t\in [0,c_{y,1}].
\end{equation}
\end{proposition}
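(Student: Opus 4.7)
The plan is to establish a standard Armijo-type sufficient decrease lemma adapted to the Riemannian setting. Since LICQ holds everywhere on $\M(y)$ by Proposition~\ref{equitheo}, the Newton retraction $\Re$ from \eqref{retraceq1} is a bona fide smooth retraction on a neighborhood of the zero section of $T\M(y)$ around $y$. Moreover, the Riemannian gradient $\g f(z)$ depends continuously on $z \in \M(y)$ via the projection formula \eqref{projeq} and the LDL decomposition. Using $\g f(y) \neq 0$ together with these continuity properties, I would first choose constants $\beta^{\ast}, c > 0$ such that (i) $\Re(z, -t\g f(z))$ is well-defined for $(z,t) \in (\M(y) \cap \overline{B_{\beta^{\ast}}(y)}) \times [0,c]$, (ii) $\|\g f(z)\| \geq \tfrac{1}{2}\|\g f(y)\|$ on $\M(y) \cap \overline{B_{\beta^{\ast}}(y)}$, and (iii) the intersection $\M(y) \cap \overline{B_{\beta^{\ast}}(y)}$ is compact (achieved by shrinking $\beta^{\ast}$ so that the open full-rank condition persists throughout the closed ball, reducing the intersection to a closed subset of a compact ball).

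Next I would define the auxiliary scalar function $\Phi(z,t) := f(\Re(z, -t\g f(z)))$ and compute its partial derivative at $t=0$. The retraction axioms $\Re(z,0) = z$ and $D\Re(z,0)[h] = h$ yield $\partial_t \Phi(z, 0) = \langle \nabla f(z), -\g f(z)\rangle$. Since $\g f(z) = \P_{T_z \M(y)}\nabla f(z) \in T_z \M(y)$, the orthogonal projection identity gives $\langle \nabla f(z), \g f(z)\rangle = \|\g f(z)\|^2$, so $\partial_t \Phi(z,0) = -\|\g f(z)\|^2$. The fundamental theorem of calculus then yields
$$\Phi(z,t) = f(z) - t\|\g f(z)\|^2 + \int_0^t \bigl[\partial_t \Phi(z,s) - \partial_t \Phi(z, 0)\bigr] ds.$$

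To close the argument, I would shrink the stepsize tolerance to absorb the integral into the leading linear term. Since $\partial_t \Phi$ is continuous on the compact domain $\bigl(\M(y) \cap \overline{B_{\beta^{\ast}}(y)}\bigr) \times [0,c]$, it is uniformly continuous there; combined with the uniform lower bound $\|\g f(z)\|^2 \geq \tfrac{1}{4}\|\g f(y)\|^2 > 0$, this lets me pick $c_{y,1} \in (0,c]$ small enough that $|\partial_t \Phi(z,s) - \partial_t \Phi(z,0)| \leq (1-\delta)\|\g f(z)\|^2$ for all $z \in \M(y) \cap B_{\beta^{\ast}}(y)$ and $s \in [0, c_{y,1}]$. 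Setting $\beta_{y,1} := \beta^{\ast}$ and substituting this bound into the displayed identity gives the desired inequality $f(\Re(z, -t\g f(z))) \leq f(z) - t\delta \|\g f(z)\|^2$.

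The main obstacle I anticipate is justifying the smoothness, and hence continuity, of $\Phi$ (and its $t$-derivative) on the compact neighborhood. This is nontrivial because the Newton retraction is specified implicitly as the limit of the Gauss-Newton iteration \eqref{retraceq1} rather than by a closed-form expression. The fix is standard: under LICQ the defining equations of the retraction admit a locally smooth solution by the implicit function theorem, and the Gauss-Newton iteration is guaranteed to recover this solution (up to machine precision) for any tangent vector in a neighborhood of the origin, as the author notes in Remark~\ref{remretrac}. A secondary subtlety is that $\M(y)$ may fail to be closed in $\R^n$ due to the open rank condition in Proposition~\ref{equitheo}, but this is resolved by shrinking $\beta^{\ast}$ until $\rank(MA\DD(z))=r$ holds throughout $\overline{B_{\beta^{\ast}}(y)}$, so that the intersection with $\M(y)$ becomes the zero set of a closed system of equations inside a closed ball.
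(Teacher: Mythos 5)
Your proof is correct, and it reaches the stated inequality by a route that differs in its technical execution from the paper's. The paper integrates the Riemannian gradient along the geodesic $\gamma$ in $\M(y)$ joining $z$ to $\Re(z,-t\,\g f(z))$, and controls the result with three auxiliary estimates: a bound on the geodesic length by $4t\|\g f(y)\|$, a bound on the oscillation $\|\g f(\gamma(s))-\g f(y)\|$, and a bound on $\< \g f(y),\Re(z,-t\g f(z))-z+t\g f(y)\>$ quantifying how closely the retraction tracks the straight step; everything is measured against the fixed reference gradient $\g f(y)$, and indeed the paper's final line delivers the decrease as $-t\delta\|\g f(y)\|^2$ rather than the $-t\delta\|\g f(z)\|^2$ appearing in the statement. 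Your argument instead works with the scalar function $\Phi(z,t)=f(\Re(z,-t\,\g f(z)))$, computes $\partial_t\Phi(z,0)=-\|\g f(z)\|^2$ from the retraction axioms and the orthogonal-projection identity, and absorbs the integral remainder via uniform continuity of $\partial_t\Phi$ on a compact set together with the lower bound $\|\g f(z)\|\geq \tfrac12\|\g f(y)\|$. What your route buys is twofold: it avoids geodesics entirely (the paper implicitly needs a geodesically convex neighborhood for $\gamma$ to exist on a possibly open, non-complete submanifold), and it lands exactly on the constant $\|\g f(z)\|^2$ in the statement. What it costs is the need to justify joint continuity of $\partial_t\Phi$ in $(z,s)$, which you correctly reduce to smoothness of the retraction on the tangent bundle plus continuity of $\g f$ (the latter holding because $MA\DD(z)$ has constant rank $r$ on $\M(y)$, so the kernel projection varies continuously); both proofs lean equally on the implicit-function-theorem justification that the Newton retraction is a genuine smooth retraction, a point the paper itself only addresses informally in Remark~\ref{remretrac}. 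Your handling of the non-closedness of $\M(y)$ via the openness of the rank condition is also sound and is a detail the paper's proof does not spell out.
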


\begin{remark}\label{Rline-search}
In Proposition~\ref{gdprop}, $c_{y,1}$ might be unknown. In practice, we can use the following Armijo linesearch with backtracking rate of $0.5$ \cite[section 3]{numerical} to adaptively find a step size:
\begin{equation}\label{linsearch1}
\min\left\{k\in \mathbb{N}:\ f\( \Re\(y,-(1/2)^k \g f(y)\) \)\leq f(y)-(1/2)^k\cdot\delta\| \g f(y) \|^2\right\}.
\end{equation}
Proposition~\ref{gdprop} ensures that the stepsize $(1/2)^k\geq \min\{1,c_{y,1}/2\}.$
\end{remark}

Proposition~\ref{rgrad} and Proposition~\ref{gdprop} show that we can use RGD to decrease the function value if the complementarity is violated. However, the first-order stationarity of (\ref{LCPH}) is not enough to ensure $x=y\circ y$ to be a first-order stationary point of (\ref{LCP}) because we have to further ensure the dual feasibility. A simple example is that for a convex function $\phi$, a first-order stationary point of (\ref{LCP}) is also a global minimizer, but a first-order stationary point of (\ref{LCPH}) can be a global maximizer. 
Besides, a sufficient decrease is not assured when $\| \g f(y) \|$ is large because $\beta_{y,1}$ and $c_{y,1}$ may not have a uniform lower bound. 
The aforementioned inconsistency is caused by the possible unboundedness of $f(y)$ and $\M(y)$'s curvature, with the latter issue being non-negligible due to the potential openness of the submanifold $\M(y)$, as also seen in fixed-rank manifolds \cite[section 7.5]{intromani}. In the next subsection, we will introduce the projected gradient method to tackle these issues.


\subsection{Projected gradient descent (PGD)}
As mentioned in the previous subsection, RGD may suffer from small stepsize issue because of the singularity of $\PA.$ One simple way to overcome this difficulty is to combine RGD with the projected gradient method \cite{birgin2000nonmonotone}, which updates the iteration as follows:
\begin{equation}\label{PG}
x_{k+1}:=\P_{\C_{A,b}}(x_k-t_k \nabla \phi(x_k)).
\end{equation}
This amounts to solving the following convex projection problem:
\begin{equation}\label{ProjPG}
\min\left\{\frac{1}{2}\|x-\( x_k-t_k \nabla \phi(x_k) \)\|^2 \;:\ Ax=b,\ x\in \R^n_+ \right\},
\end{equation}
whose dual problem is given by
\begin{equation}\label{ProjPGD}
\min_{\lambda\in \R^m}\Big\{ \frac{1}{2} \|\P_{\R^n_+}\big(x_k-t_k \nabla \phi(x_k)+A^\top \lambda\big)\|^2
-\<\lambda,b\> \Big\}.
\end{equation}
The latter can be solved by a (regularized) semi-smooth Newton method (see algorithm SSNCG1 of \cite{li2020efficient}). We need the following proposition, which originates from \cite{birgin2000nonmonotone}.

\begin{proposition}(Lemma 2.1 and Lemma 2.2 of \cite{birgin2000nonmonotone})\label{PGdefi}
For any $t>0$ and $x\in \C_{A,b},$ define $g_t(x):=\P_{\C_{A,b}}\( x-t \nabla \phi(x) \)-x$ and $h_t(x)=\| g_t(x) \|/t.$ We have that (1) $\< \nabla \phi(x),g_t(x) \>\leq -\| g_t(x) \|^2/t$; (2) $g_t(x)=0$ if and only if $x$ is a first-order stationary point of (\ref{LCP}); (3) $h_t(x)$ is monotonically nonincreasing with respect to $t.$
\end{proposition}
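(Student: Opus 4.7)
My plan is to prove the three assertions in the order given, since each builds on the variational characterization of the metric projection onto the convex polyhedron $\mathcal{C}_{A,b}$.

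For part (1), I would start from the defining variational inequality of $\P_{\mathcal{C}_{A,b}}$: for any $v\in\R^n$ and any $u\in \mathcal{C}_{A,b}$,
\begin{equation*}
\left\langle \P_{\mathcal{C}_{A,b}}(v)-v,\ u-\P_{\mathcal{C}_{A,b}}(v)\right\rangle \ge 0.
\end{equation*}
Apply this with $v = x - t\nabla \phi(x)$ and $u = x$ (legal since $x\in\mathcal{C}_{A,b}$). Writing $\P_{\mathcal{C}_{A,b}}(v) = x + g_t(x)$, the left-hand side becomes $\langle g_t(x)+t\nabla\phi(x),\ -g_t(x)\rangle \ge 0$, which rearranges to exactly $\langle \nabla\phi(x),g_t(x)\rangle \le -\|g_t(x)\|^2/t$.

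For part (2), the direction ``$g_t(x)=0 \Rightarrow x$ stationary'' follows by feeding $\P_{\mathcal{C}_{A,b}}(x-t\nabla\phi(x)) = x$ back into the variational inequality: one gets $\langle -t\nabla\phi(x), u-x\rangle\le 0$ for every $u\in\mathcal{C}_{A,b}$, which is the first-order optimality condition for \eqref{LCP} and is equivalent to the existence of a KKT multiplier $\lambda$ as in \eqref{LCPKKT} by Farkas' lemma (used for instance in the proof of Lemma~\ref{2ndlem}). The converse is immediate: if $x$ is stationary, then $x$ itself satisfies the variational inequality defining $\P_{\mathcal{C}_{A,b}}(x-t\nabla\phi(x))$, and by uniqueness of the projection $g_t(x)=0$.

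Part (3) is where the only real work lies. The goal is to show that if $0<s\le t$, then $\|g_t(x)\|/t \le \|g_s(x)\|/s$. The cleanest route is to use the fact that the proximal operator of the indicator $\iota_{\mathcal{C}_{A,b}}$ is firmly nonexpansive, equivalently that $\P_{\mathcal{C}_{A,b}}-\mathrm{Id}$ is a nonexpansive operator scaled by a cocoercive constant. Concretely, I would write the variational inequalities satisfied at the two parameters $s$ and $t$, namely
\begin{equation*}
\langle g_t(x)+t\nabla\phi(x),\ x+g_s(x)-x-g_t(x)\rangle\ge 0,\quad \langle g_s(x)+s\nabla\phi(x),\ x+g_t(x)-x-g_s(x)\rangle\ge 0,
\end{equation*}
add them, and use $\langle g_t(x)-g_s(x),g_t(x)-g_s(x)\rangle\ge 0$ to derive $\langle \nabla\phi(x), g_s(x)-g_t(x)\rangle\cdot(t-s) \ge \|g_t(x)-g_s(x)\|^2 \ge 0$ after suitable reorganization. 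Combined with the sign information from part (1), this yields $\|g_t(x)\|^2/t^2 \le \|g_s(x)\|^2/s^2$. The main obstacle I anticipate is precisely this algebraic manipulation: it is easy to get the directions of the inequalities wrong, so I would be careful to track signs, perhaps by first establishing the intermediate claim that $\|g_t(x)\|$ is nondecreasing in $t$ (which follows from a similar ``add the two VIs'' trick) and then noting that $\|g_t(x)\|/t$ is nonincreasing because the proximal step $g_t(x)$ scales sublinearly in $t$ against a convex feasible set.
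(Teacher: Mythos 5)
The paper offers no proof of this proposition --- it is imported verbatim from Lemmas 2.1 and 2.2 of the cited reference --- so there is no in-paper argument to compare against; I assess your plan on its own terms. Parts (1) and (2) are correct and complete: the variational inequality $\langle v-\P_{\C_{A,b}}(v),\,u-\P_{\C_{A,b}}(v)\rangle\le 0$ for all $u\in\C_{A,b}$, tested at $u=x$ with $v=x-t\nabla\phi(x)$, gives (1) at once, and (2) is the standard equivalence between a fixed point of the projected-gradient map, the condition $\langle\nabla\phi(x),u-x\rangle\ge 0$ for all $u\in\C_{A,b}$, and the KKT system \eqref{LCPKKT} via Farkas' lemma.

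Part (3), however, has a genuine gap as written. Set $d=-\nabla\phi(x)$ and let $0<s<t$. Adding the two cross-tested variational inequalities, as you propose, yields $(t-s)\langle d,\,g_t(x)-g_s(x)\rangle\ge\|g_t(x)-g_s(x)\|^2\ge 0$, i.e.\ only that $t\mapsto\langle d,g_t(x)\rangle$ is nondecreasing. Combining this with part (1) cannot close the argument: part (1) is a one-sided bound, $\langle d,g_t(x)\rangle\ge\|g_t(x)\|^2/t$, and chaining it with $\langle d,g_t(x)\rangle\ge\langle d,g_s(x)\rangle\ge\|g_s(x)\|^2/s$ produces inequalities pointing the wrong way --- no comparison between $\|g_t(x)\|/t$ and $\|g_s(x)\|/s$ follows. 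The standard (Gafni--Bertsekas) argument instead uses the two cross-tested inequalities \emph{separately}, each followed by Cauchy--Schwarz: testing the inequality at parameter $t$ against $u=x+g_s(x)$ gives $t\langle d,\,g_t-g_s\rangle\ge\|g_t\|\,(\|g_t\|-\|g_s\|)$, while testing the one at parameter $s$ against $u=x+g_t(x)$ gives $s\langle d,\,g_t-g_s\rangle\le\|g_s\|\,(\|g_t\|-\|g_s\|)$. The second estimate together with $\langle d,g_t-g_s\rangle\ge 0$ forces $\|g_t\|\ge\|g_s\|$ (your intermediate claim, which is therefore a consequence rather than a separate lemma), and then sandwiching $\langle d,g_t-g_s\rangle$ between $\|g_t\|(\|g_t\|-\|g_s\|)/t$ and $\|g_s\|(\|g_t\|-\|g_s\|)/s$ and dividing out $\|g_t\|-\|g_s\|$ (the case $\|g_t\|=\|g_s\|$ being trivial since $t\ge s$) gives $\|g_t\|/t\le\|g_s\|/s$. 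Your closing justification that $g_t(x)$ ``scales sublinearly in $t$ against a convex feasible set'' is circular --- it restates the claim. With the two Cauchy--Schwarz estimates inserted, your VI-based route is exactly the classical proof and goes through.
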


With Proposition~\ref{PGdefi}, we have the following result on the sufficient decrease property of the projected gradient method.
\begin{proposition}\label{PGprop}
Let $\delta\in (0,1)$ be a given constant. For any $x=y\circ y\in \C_{A,b}$ that is not a stationary point of (\ref{LCP}), there exists $\beta_{y,2},c_{y,2}>0$ such that for any $u\in \C_{A,b}\cap B_{\beta_{y,2}}(x)$,
\begin{equation}\label{PGdes}
{\rm PGD:}\ \phi\( \P_{\C_{A,b}}\( u-t \nabla \phi(u) \) \)\leq \phi(u)-t\cdot\delta h_t(u)^2
\quad \forall\; t\in (0,c_{y,2}].
\end{equation}
\end{proposition}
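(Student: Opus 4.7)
The plan is to prove the standard sufficient decrease inequality for projected gradient by combining local Lipschitz smoothness of $\nabla\phi$ (which follows from Assumption~\ref{ass1}) with the key inner-product bound from Proposition~\ref{PGdefi}(1). Let me abbreviate $x_+(u,t):=\P_{\C_{A,b}}(u-t\nabla\phi(u))=u+g_t(u)$.

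First I would localize the smoothness. Since $\phi$ is continuously differentiable, there is a closed ball $\bar B_{\rho}(x)$ on which $\nabla\phi$ is uniformly bounded by some $G>0$ and Lipschitz continuous with some constant $L>0$. Next I would choose $c_{y,2}$ and $\beta_{y,2}$ so that both $u$ and $x_+(u,t)$ stay inside $\bar B_{\rho}(x)$ for all $u\in \C_{A,b}\cap B_{\beta_{y,2}}(x)$ and $t\in(0,c_{y,2}]$. The key bound for this is the non-expansiveness of $\P_{\C_{A,b}}$, which yields
\begin{equation*}
\|g_t(u)\|=\|\P_{\C_{A,b}}(u-t\nabla\phi(u))-\P_{\C_{A,b}}(u)\|\leq t\|\nabla\phi(u)\|\leq tG,
\end{equation*}
so picking $\beta_{y,2}+c_{y,2}G\leq\rho$ is sufficient.

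Then I would apply the descent lemma obtained from Lipschitz smoothness of $\nabla\phi$ on $\bar B_{\rho}(x)$: for any $u\in \C_{A,b}\cap B_{\beta_{y,2}}(x)$ and $t\in(0,c_{y,2}]$,
\begin{equation*}
\phi(x_+(u,t))\leq \phi(u)+\langle\nabla\phi(u),g_t(u)\rangle+\tfrac{L}{2}\|g_t(u)\|^2.
\end{equation*}
Invoking Proposition~\ref{PGdefi}(1), the middle term is at most $-\|g_t(u)\|^2/t$, so
\begin{equation*}
\phi(x_+(u,t))-\phi(u)\leq -\Bigl(\tfrac{1}{t}-\tfrac{L}{2}\Bigr)\|g_t(u)\|^2.
\end{equation*}

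Finally, I would shrink $c_{y,2}$ further if needed so that $c_{y,2}\leq 2(1-\delta)/L$; then for every $t\in(0,c_{y,2}]$ we have $1/t-L/2\geq 1/t-(1-\delta)/t=\delta/t$, and since $h_t(u)^2=\|g_t(u)\|^2/t^2$ by Proposition~\ref{PGdefi}, the previous inequality rearranges exactly to (\ref{PGdes}). The main obstacle, which is mild, is simply the bookkeeping to ensure that the projected iterate stays in the Lipschitz neighborhood uniformly in $u$ and $t$; the non-expansiveness estimate $\|g_t(u)\|\leq tG$ handles this cleanly. Note that the non-stationarity hypothesis on $x$ is not needed for the inequality itself (it holds trivially at stationary points) but is what gives a genuine decrease via $h_t(u)>0$ for $u$ near $x$, by continuity of $g_t$.
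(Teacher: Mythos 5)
There is a genuine gap in your argument: you assert that because $\phi$ is continuously differentiable, $\nabla\phi$ is Lipschitz continuous on a closed ball around $x$. That implication is false — $C^1$ does not give a locally Lipschitz gradient (consider $\phi(x)=|x|^{3/2}$ near the origin), and Assumption~\ref{ass1} only grants continuous differentiability. Your entire derivation hinges on the descent lemma $\phi(u+g_t(u))\leq \phi(u)+\langle\nabla\phi(u),g_t(u)\rangle+\tfrac{L}{2}\|g_t(u)\|^2$, which is exactly the estimate that is unavailable without a Lipschitz gradient. Under the paper's hypotheses you only get a modulus-of-continuity bound $\sup_{s}\|\nabla\phi(\gamma(s))-\nabla\phi(u)\|\leq \omega(t)$ with $\omega(t)\to 0$, yielding an error term of order $\omega(t)\,\|g_t(u)\|$ rather than $\|g_t(u)\|^2/t$ times a constant; since $\|g_t(u)\|$ can itself be $o(t)$, this error need not be dominated by a fixed fraction of $\|g_t(u)\|^2/t$ in general.

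This is precisely where the non-stationarity of $x$ — which you dismiss as unnecessary for the inequality — does real work in the paper's proof. Because $x$ is not stationary, $h_1(x)>0$, and by continuity of $h_1$ together with the monotonicity $h_t(u)\geq h_1(u)$ from Proposition~\ref{PGdefi}(3), the quantity $\|g_t(u)\|/t=h_t(u)$ is bounded \emph{below} by a positive constant for $u$ near $x$ and $t\leq 1$. Hence one can shrink $\beta_{y,2}$ and $c_{y,2}$ so that the uniform-continuity error satisfies $\sup_{s\in[0,1]}\|\nabla\phi(\gamma(s))-\nabla\phi(u)\|\leq (1-\delta)\|g_t(u)\|/t$, and then integrating $\langle\nabla\phi(\gamma(s)),\gamma'(s)\rangle$ along the segment and invoking Proposition~\ref{PGdefi}(1) gives the decrease $-\delta\|g_t(u)\|^2/t$. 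To repair your proof either strengthen the hypothesis to $\nabla\phi$ locally Lipschitz (which the paper does not assume), or replace the descent lemma by the integral/uniform-continuity argument just described, which requires the lower bound on $h_t(u)$ supplied by non-stationarity.
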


\begin{remark}\label{Pline-search}
In Proposition~\ref{PGprop}, $c_{y,2}$ is unknown in practice. Similar to the RGD method, we can perform the following Armijo linesearch to the PGD method:
\begin{equation}\label{linsearch2}
\min\left\{k\in \mathbb{N}^+:\ \phi\( \P_{\C_{A,b}}\( x-(1/2)^k \nabla \phi(x) \) \)\leq \phi(x)-(1/2)^k\cdot\delta h_{(1/2)^k}(x)^2\right\}.
\end{equation}
Proposition~\ref{PGprop} ensures that the step size $(1/2)^k\geq \min\{1,c_{y,2}/2\}.$
\end{remark}

Although PGD has a nice sufficient decrease property, solving problem (\ref{ProjPGD}) might be expensive especially when (\ref{LCP}) is primal degenerate. In the next subsection, we will design a hybrid method that uses the RGD whenever the iterate is not close to a singular 
point of $\PA$ and employs the PGD otherwise. As the set of singular points of $\PA$ is ``sparse''
in $\PA$, we expect that the more expensive PGD step will be used infrequently.


\subsection{A hybrid method with convergence guarantee}
In this subsection, we will design our algorithm, which is a combination of the RGD and PGD methods. The main idea is to use the former  to decrease the function value when the iteration 
is on the smooth part of $\PA$ and switch to the projected gradient method when it becomes ``nearly singular". We define the near-singularity as follows.

\begin{definition}\label{defi-sing}
Consider $y\in \M(y)$ with regular representation $(M,N,r)$ and $x=y\circ y.$ Let $L\DD(d)L^\top$ be the LDL decomposition of $A\DD(x)A^\top.$ Suppose $d$ is sorted as $d_{i_1}\geq d_{i_2}\geq \cdots\geq d_{i_r}>0=d_{i_{r+1}}=\cdots=d_{i_n}.$ For any $\sigma>0,$ we say that $y$ is $\sigma-$singular if $d_{i_r}<\sigma.$ Otherwise, we say that $y$ is $\sigma-$regular, and define $\M_\sigma(y):=\left\{z\in \M(y):\ z{\rm\  is\ } 
\mbox{$\sigma-$regular}\right\}.$
\end{definition}

From Lemma~\ref{LDL}, we know that the vector $d$ is unique in the LDL decomposition of $A\DD(x)A^\top.$ Thus, the above definition is only a property about $y$ and it does not depend on the manifold decomposition of $\PA$.
 Now with Definition~\ref{defi-sing}, we can state our algorithm as follows.

\begin{description}
\item [Algorithm 1:] Choose $y_0\in \PA,$  and $\delta,\epsilon,\sigma\in (0,1).$ Set $x_0=y_0\circ y_0$ and $k=0.$
\item[Step 1.] 
{(a)} If $y_k$ is $\sigma-$regular and $\| \g f(y_k) \|>\epsilon$, then $y^+\leftarrow\Re_{y_k}\(y_k,-t_k \g f(y_k)\)$ satisfies (\ref{grades}) for some $t_k$ coming from the linesearch procedure (\ref{linsearch1}). Set $x^+\leftarrow y^+\circ y^+.$ Goto Step 2.

{(b)} If $y_k$ is $\sigma-$singular or $\| \g f(y_k) \|\leq \epsilon$, then $x^+\leftarrow \P_{\C_{A,b}}(x_k-t_k\nabla \phi(x_k))$ satisfies (\ref{PGdes}) for some $t_k$ coming from
the linesearch procedure (\ref{linsearch2}). Set $y^+\leftarrow \sqrt{x^+}.$ Goto Step 2.
\item[Step 2.] Set $x_{k+1}\leftarrow x^+,$ $y_{k+1}\leftarrow y^+,$ $k\leftarrow k+1.$ Goto Step 1.
\end{description}

In Algorithm 1, we apply RGD when the current iteration is $\sigma-$regular and the Riemannian gradient norm is larger than some threshold $\epsilon.$ These two conditions are important for the good behaviour of RGD. If any one of the two conditions is not satisfied, we use the PGD method. 
In practice, we choose $\epsilon$ and $\sigma$ to be $10^{-10}$ to avoid using the PGD method
 too frequently. Another point to note is that the RGD step keeps the iteration on the current submanifold. However, the PGD step may switch the iteration point to another submanifold. 
Thus the PGD step allows us to adaptively identify a submanifold of $\PA$ while guaranteeing the decrease in the function value. {Because Gauss-Newton method only has local convergence guarantee, we will first randomly generate a point in $\R^n$ and project it onto the convex polyhedron $\mathcal{C}_{A,b}$ to get $x_0.$ Such step can be computed by the semi-smooth Newton method or the code \texttt{PPROJ} \cite{hager2016projection}. Then we factorize $x_0=y_0\circ y_0$ to get the initial point $y_0\in \PA.$}

In Algorithm 1, we intentionally use a simple algorithmic framework to facilitate our convergence analysis. In order to achieve good practical performance, it is important to use some numerical techniques to accelerate our algorithm. For example, when choosing the stepsize, compared with the naive Armijo linesearch mentioned in Remark~\ref{Rline-search} and Remark~\ref{Pline-search}, it would be better to use the Barzilai-Borwein stepsize together with a non-monotone linesearch \cite{birgin2000nonmonotone}.

Also, {in order to terminate our algorithm, we will} check the KKT conditions of the current iteration and terminate our algorithm when the following KKT residuals are smaller than some tolerance:
\begin{eqnarray}\label{KKTresidual}
&&{\rm Rp}:=\max\left\{\frac{\|Ax-b\|}{1+\|b\|},\frac{\|\P_{\R^n_-}(x)\|}{1+\|x\|}\right\},\ {\rm Rd}:=\frac{\|\P_{\R^n_-}\(\nabla \phi(x)-A^\top \lambda\)\|}{1+\| \nabla \phi(x) \|},
\nonumber \\ 
&& {\rm Rc}:=\frac{|\< \nabla \phi(x)-A^\top \lambda,x \>|}{1+\| \nabla \phi(x) \|}.
\end{eqnarray}

{To check the above KKT conditions, we have to compute the Lagrangian multiplier $\lambda.$ One choice is the method discussed in Subsection~\ref{subsec-verify}. However, through our numerical experiments, we find that it is more convenient to use the multiplier computed from the projected gradient step as the Lagrangian multiplier. In our implementation, we 
compute the multiplier to check the KKT residuals based on  the following two cases:
\begin{description}
\item [Case 1.] If the current iteration is smooth, the unique Lagrangian multiplier $\lambda$ will be obtained from the linear system of the projection mapping (\ref{eqproj}). 
\item [Case 2.] If the current iteration is singular, the Lagrangian multiplier $\lambda$ will be obtained from the optimal solution of the projected gradient subproblem (\ref{ProjPGD}).
\end{description}
}

{The above Case 2 is essentially using projected gradient method to recover the dual variable. Since projected gradient method is not used in every iteration, we will periodically switch to projected gradient, say once every 20 iterations for degenerate problems, to check KKT conditions so that our algorithm can find a solution of required accuracy in time.}

Now we state and prove the convergence result of Algorithm 1.

\begin{theorem}\label{conv}
For any $\delta,\sigma,\epsilon\in (0,1)$ and $y_0\in \PA.$ Suppose $\{y_k\}_{k\in \mathbb{N}^+}$ is the sequence generated from Algorithm 1 and $x_k=y_k\circ y_k.$ Then for any accumulation point $y^*$ of $\{y_k\}_{k\in \mathbb{N}^+}$,  $x^*:=y^*\circ y^*$ is a first-order stationary point of (\ref{LCP}).
\end{theorem}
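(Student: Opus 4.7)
The plan is to argue by contradiction, assuming $x^*=y^*\circ y^*$ is not a first-order stationary point of (\ref{LCP}). The skeleton is the standard ``sufficient-decrease vanishes'' argument: both the RGD linesearch (\ref{linsearch1}) and the PGD linesearch (\ref{linsearch2}) enforce a non-negative decrease, so $\{f(y_k)\}$ is non-increasing. Since $\phi$ is continuous and $y_{k_j}\to y^*$ along a subsequence, $f(y_k)\downarrow f(y^*)$, and hence $f(y_k)-f(y_{k+1})\to 0$. At each $k_j$ the algorithm executes either RGD (branch~(a)) or PGD (branch~(b)), so at least one of the two branches is triggered at infinitely many $k_j$, and it suffices to rule out each scenario.

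For the PGD scenario, the assumed non-stationarity of $x^*$ and Proposition~\ref{PGdefi}(2) give $h_1(x^*)>0$. Applying Proposition~\ref{PGprop} at $y^*$ furnishes a neighborhood of $x^*$ and a constant $c_{y^*,2}>0$ on which the sufficient-decrease inequality holds for every sufficiently small $t$. For large $j$ the iterate $x_{k_j}$ lies in this neighborhood, so the backtracking linesearch terminates with $t_{k_j}\ge \min\{1,c_{y^*,2}\}/2$. Combining this with $h_{t_{k_j}}(x_{k_j})\ge h_1(x_{k_j})\to h_1(x^*)>0$ (using Proposition~\ref{PGdefi}(3) and $t_{k_j}\le 1/2$) bounds the per-step decrease below by a fixed positive constant, contradicting $f(y_k)-f(y_{k+1})\to 0$.

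For the RGD scenario, I pass to a further subsequence on which $\supp(y_{k_j})=S$ is constant (possible since there are only $2^n$ supports); then all $y_{k_j}$ share a common regular representation $(M,N,r)$ and the stratum $\M(y_{k_j})$ is the same for every $j$. The $\sigma$-regularity of each $y_{k_j}$ forces the positive eigenvalues of $A\DD(x_{k_j})A^\top$ to stay bounded below by $\sigma$; by continuity of the spectrum, $A\DD(x^*)A^\top$ still has rank exactly $r$, and therefore $\Ran[A\DD(y^*)]=\Ran[A\DD(y_{k_j})]$. Together with $\supp(y^*)\subset S$, this verifies the formula in Proposition~\ref{equitheo} and places $y^*$ in the same stratum as the $y_{k_j}$. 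Since $\|\g f\|$ is continuous on this stratum and the RGD branch requires $\|\g f(y_{k_j})\|>\epsilon$, we obtain $\|\g f(y^*)\|\ge \epsilon$, so Proposition~\ref{gdprop} applies at $y^*$ and the linesearch bound in Remark~\ref{Rline-search} gives $t_{k_j}\ge \min\{1,c_{y^*,1}/2\}$. The per-step decrease is then at least a fixed positive multiple of $\epsilon^2$, again contradicting $f(y_k)-f(y_{k+1})\to 0$.

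The main obstacle is the manifold-matching step in the RGD scenario, which is precisely where an ``apocalypse'' could in principle occur since the strata are typically not closed. The essential role of the $\sigma$-regularity criterion built into the algorithm is to preclude rank drops in the limit: without it, $y^*$ could live on a strictly lower-dimensional stratum and Proposition~\ref{gdprop} would fail to produce a uniform stepsize bound. Once the manifold-matching is established through the spectrum-continuity argument, the remainder of the proof is a routine linesearch accounting on each branch.
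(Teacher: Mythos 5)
Your proposal follows essentially the same route as the paper's proof: argue by contradiction, split according to which branch of Step 1 fires infinitely often, use the finiteness of the stratification to pass to a subsequence lying on a single stratum, and invoke the uniform stepsize bounds from Propositions~\ref{gdprop} and~\ref{PGprop}. Your bookkeeping differences are harmless: arguing that the per-step decrease must tend to zero is equivalent to the paper's conclusion that $\phi(x^*)=-\infty$, and passing to a constant-support subsequence is a finer (but still finite, hence valid) refinement of the paper's constant-stratum subsequence.

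The one step that is not justified as written is the rank-preservation claim in the RGD case. You assert that $\sigma$-regularity forces the positive \emph{eigenvalues} of $A\DD(x_{k_j})A^\top$ to stay bounded below by $\sigma$, but Definition~\ref{defi-sing} bounds the nonzero pivots $d_i$ of the LDL factorization, not the eigenvalues. For a unit lower triangular $L$ with large off-diagonal entries, the smallest nonzero eigenvalue of $L\DD(d)L^\top$ can be far smaller than $\min\{d_i:\,d_i>0\}$ (e.g.\ in $2\times 2$ with $d=(1,1)$ and subdiagonal entry $M$, the small eigenvalue is of order $1/M^2$), so ``continuity of the spectrum'' alone does not rule out a rank drop in the limit. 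The repair is exactly the paper's Lemma~\ref{sigmaregular}: the trace identity $\langle I, A\DD(x_k)A^\top\rangle \ge \sigma\left(\|L_k\|^2-(m-r)\right)$ bounds $\|L_k\|$, one then extracts convergent subsequences $L_k\to L^*$ (unit lower triangular, hence invertible) and $d_k\to d^*$ with exactly $r$ entries at least $\sigma$, and concludes that $A\DD(x^*)A^\top=L^*\DD(d^*)L^{*\top}$ has rank $r$, i.e.\ $\M_\sigma(y)$ is closed and $y^*$ lies on the same stratum. With that substitution your argument is complete and coincides with the paper's; you are in fact slightly more explicit than the paper in verifying $\|\g f(y^*)\|\ge\epsilon$ before applying Proposition~\ref{gdprop}.
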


\begin{proof}
Assume on the contrary that $x^*$ is not a first-order stationary point of (\ref{LCP}). 
Then, according to Proposition~\ref{PGdefi}, $h_1(x^*) = \norm{ g_1(x^*)} > 0$.
Decompose $\{i_k:\ k\in \mathbb{N}\}=\I\sqcup \J$ such that $\I$ corresponds to the iterations for RGD
 and $\J$ corresponds to the iterations for PGD. 
We have two cases to consider.

\medskip

\noindent{\bf Case 1:} $|\J|=\infty$.\quad
From Proposition~\ref{PGprop} and the linesearch strategy mentioned in Remark~\ref{Pline-search}, we have that there exists $N\in \mathbb{N}^+$ and a positive constant $\alpha\in (0,1)$ such that for any $k\in \J$ and $k>N,$ the stepsize $t_k\in (\alpha,1]$ and $h_{t_k}(x_k)\geq h_{1}(x_k)\geq h_{1}(x^*)/2.$ Note that we have used the fact that $h_t(x)$ is monotonically nonincreasing from Proposition~\ref{PGdefi} and the continuity of $h_1(x).$ From (\ref{PGdes}), we have that 
\begin{equation}\label{Sep_24_5}
\phi(x_{k+1})\leq \phi(x_k)-\delta \alpha h_{1}(x^*)^2/2.
\end{equation}
From (\ref{Sep_24_5}) and the monotonically non-increasing property of Algorithm 1, we have that $\phi(x^*)={\inf}\left\{ \phi(x_k):\ k\in \J \right\}=-\infty,$ which contradicts the fact that $\phi(x^*)\in \R.$

\bigskip
\noindent{\bf Case 2:} $|\I|=\infty.$\quad
From Proposition~\ref{equitheo}, there are only finitely many submanifolds in the manifold decomposition of $\PA.$ Thus, there exists $\I_1\subset \I$ such that for any $k\in \I_1,$ $y_k\in \M(y_l),$ for some $l\in \I_1.$ From Step 1 in Algorithm 1, we have that for any $k\in \I_1,$ $\|\g f(y_k)\|>\epsilon$ and $y_k\in \M_\sigma(y_l)$. From Lemma~\ref{sigmaregular}, we have that $y^*\in \M_\sigma(y_l).$ From Proposition~\ref{gdprop}, we have that there exists $N\in \mathbb{N}^+$ such that for any $k\geq N$ and $k\in \I_1$, the stepsize obtained by the linesearch strategy in Remark~\ref{Rline-search} satisfies $t_k\in (\alpha,1]$ for some 
positive constant $\alpha.$ From (\ref{grades}), we have that
\begin{equation}\label{Sep_24_6}
f(y_{k+1})\leq f(y_k)- \alpha\delta \epsilon^2.
\end{equation}
Similar to Case 1, we obtain $\phi(x^*)=-\infty,$ which contradicts the fact that $\phi(x^*)\in \R.$
\end{proof}

\begin{remark}
In this section, we have stated our hybrid method together with its convergence analysis. Actually, from our convergence analysis, our algorithm still converges if we use the PGD step in Step 1(a) of Algorithm 1. Our tests have shown that the RGD step always preserves the support of the iterate, that is, some entries will approach zero without reaching it exactly. On the other hand, the PGD step can reduce these values to exactly zeros, thereby modifying the iterate's support. To enhance the vector's sparsity early on, we switch to the PGD step occasionally, perhaps every ten steps even in Step 1(a). This technique is particularly useful for problems where most of the entries at the optimal solution are zeros. 
\end{remark}

One might naturally question whether our algorithm offers any benefit over purely using
projected gradient method. Although it's not yet proven theoretically, our numerical tests indicate that the Gauss-Newton method employed in the retraction mapping 
converges faster than the semi-smooth Newton method 
for solving (\ref{ProjPGD}), especially for degenerate problems. One potential reason is that the point on the tangent space before retraction is usually very close to the manifold especially when the algorithm is about to converge and the stepsize becomes small. In this case, the initial point is inside the quadratically convergent region and the residual is already very small. So after one or two iterations, the Gauss-Newton method's residual will reach machine precision. On the contrary, for the projected gradient method, the semismooth Newton method employed to solve the  subproblem \eqref{ProjPGD} generally does not enjoy the quadratic convergence property in the early iterations, and worse still, there is no quadratic convergence when the subproblem
is degenerate.

Despite the advantages of RGD, we do not claim that Algorithm 1 will consistently outperforms the projected gradient method. 
Our goal is to present it as a viable alternative, or as an acceleration, to the well established projected gradient method. Ideally, our algorithm should exhibit superior performance in some interesting applications. In the following section, we will conduct numerical experiments 
to compare the performance our approach with the projected gradient method.



\section{Numerical experiments}\label{Sec-numer}

In this section, we conduct numerical experiments to verify the effectiveness of Algorithm 1. As mentioned in the previous section, our algorithm is closely related to the projected gradient method so we will compare Algorithm 1 with it. 
Instead of implementing the projected gradient method ourselves, we compare with the well-developed solver \texttt{PASA} in the package \texttt{SuiteOPT}\footnote{Source codes from \href{https://people.clas.ufl.edu/hager/software/}{https://people.clas.ufl.edu/hager/software/}.} by Hager and Zhang \cite{hager2023algorithm}, which 
 is a software designed to solve optimization problems over a convex polyhedral set. 
According to \cite{hager2023algorithm}, \texttt{PASA} is the product of over 
20 years of cumulative research effort.
It combines a non-monotone projected gradient method and an active set algorithm that explores the faces of the convex polyhedron \cite{di2023stationarity}. We use the KKT residual in (\ref{KKTresidual}) to measure the level of first-order stationarity of their outputs. We set the maximum running time to be 36000 seconds and the tolerance ${\rm tol}=10^{-6}.$ {When solving the linear systems in the projection and retraction steps, we will first use the preconditioned conjugate gradient (PCG) method {with diagonal preconditioing and only switch to using the LDL decomposition of the coefficient matrix as the preconditioner when the PGG method cannot reach the tolerance of ${\rm tol}/10^3$ within say, 20 iterations. We will continue to use the old LDL decomposition as the preconditioner for solving the subsequent linear systems until the PCG method 
triggers another need to recompute the LDL decomposition.}
This strategy has been explored in our previous feasible method for solving general low-rank SDP problems \cite{tang2024feasible}.} All the experiments are run using Matlab R2021b on a Workstation with a Intel(R) Xeon(R) CPU E5-2680 v3 @ 2.50GHz Processor and 128GB RAM.

\subsection{Convex quadratic programming}
Consider the following standard form convex quadratic programming problem (CQP)
\begin{equation}\label{CQP}
\min\left\{ x^\top Qx/2+c^\top x:\ Ax=b,\ x\in \R^{n}_+ \right\},
\end{equation}
where $Q\in \S^n_+,$ $A\in \R^{m\times n},$ $b\in \R^m$ and $c\in \R^n$ are given data. We randomly generate (CQP) problems following the numerical tests in \cite{ding2023squared}. For completeness, we restate their generation procedure in our paper. The matrix Q is randomly generated with eigenvalues log-uniformly distributed in $[\kappa,1]$ for some $\kappa\in (0,1),$ and eigenvectors oriented randomly. The vector $c$ is set to be $-Qx^{(1)}$ such that $x^{(1)}$ has  i.i.d entries from normal distribution $N(0,1)$. The matrix $A$ and primal feasible point $x^{(2)}$  have entries drawn randomly  from the uniform distribution in $[0,1].$ The vector $b$ is set as $Ax^{(2)}.$ The {\sc Matlab} codes for the data generation is as follows:
\begin{center}
\texttt{ rng('default'); l=log(kappa); u=0; egv = exp(unifrnd(l,u,n,1)); S=randn(n,n); S = S+S'; [P,S] = eig(S); Q = (P.*egv')*P'; x1 = randn(n,1); c = -Q*xs; A = rand(m,n); x2 = rand(n,1); b=A*x2; }
\end{center}

Apart from \texttt{PASA}, we also compare with Gurobi version 10.0.3 \cite{gurobi}\footnote{\href{https://www.gurobi.com/downloads/gurobi-optimizer-release-notes-v10-0-3/}{https://www.gurobi.com/downloads/gurobi-optimizer-release-notes-v10-0-3/}.}. We use the following parameters for Gurobi:
\begin{center}
\texttt{ par.Presolve = 0; par.TimeLimit = 36000; par.OptimalityTol = 1e-6; }
\end{center}
{Note that we don't use the presolving procedure in Gurobi because we find that for our test problems, the presolving procedure usually increases the total running time of Gurobi slightly without making any simplification of the problem.}

{
\begin{center}
\begin{tiny}
\begin{longtable}{|c|c|cccccc|}
\caption{Comparison of Algorithm 1, \texttt{PASA} and Gurobi for random convex quadratic programming problems.}
\label{TCQP_random}
\\
\hline
problem & algorithm & Rp & Rd & Rc& obj & time & iter/ldl/pg \\ \hline
\endhead
n = 2000 & Algorithm 1 & 1.91e-16 & 9.99e-07 & 1.35e-09 & -1.5549714e+02& 1.80e+00 & 165/ 4/ 0 \\
m = 500 & Gurobi & 2.39e-13 & 0.00e+00 & 8.56e-09 & -1.5549714e+02& 6.00e+00 & \\
$\kappa=0.10$ & PASA & 3.77e-15 & 4.20e-07 & 1.45e-07 & -1.5549714e+02& 2.28e+00 & \\
\hline
n = 2000 & Algorithm 1 & 4.33e-11 & 8.91e-07 & 6.57e-10 & -1.2811790e+02& 2.46e+00 & 262/ 5/ 0 \\
m = 500 & Gurobi & 1.13e-13 & 0.00e+00 & 9.99e-09 & -1.2811790e+02& 6.87e+00 & \\
$\kappa=0.01$ & PASA & 3.69e-15 & 3.53e-07 & 2.17e-07 & -1.2811790e+02& 2.74e+00 & \\
\hline

n = 2000 & Algorithm 1 & 1.44e-12 & 8.40e-07 & 9.02e-09 & -4.8472049e+01& 1.27e+01 & 500/10/ 0  \\
m = 1000 & Gurobi & 3.46e-13 & 0.00e+00 & 7.87e-09 & -4.8472049e+01& 9.96e+00 & \\
$\kappa=0.10$ & PASA & 7.40e-15 & 7.58e-08 & 2.04e-08 & -4.8472049e+01& 7.18e+00 & \\
\hline
n = 2000 & Algorithm 1 & 8.28e-15 & 9.80e-07 & 5.89e-11 & -6.3029604e+01& 1.02e+01 & 176/11/ 0  \\
m = 1000 & Gurobi & 1.66e-13 & 0.00e+00 & 3.02e-09 & -6.3029604e+01& 1.01e+01 & \\
$\kappa=0.01$ & PASA & 2.14e-14 & 4.50e-07 & 3.44e-07 & -6.3029604e+01& 1.01e+01 & \\
\hline

n = 5000 & Algorithm 1 & 1.48e-15 & 9.64e-07 & 4.37e-09 & -3.6460458e+02& 1.56e+01 & 178/ 4/ 0  \\
m = 1250 & Gurobi & 1.53e-13 & 0.00e+00 & 2.34e-08 & -3.6460459e+02& 8.61e+01 & \\
$\kappa=0.10$ & PASA & 5.59e-15 & 1.05e-07 & 1.38e-08 & -3.6460459e+02& 3.20e+01 & \\
\hline

n = 5000 & Algorithm 1 & 2.29e-16 & 9.19e-07 & 5.29e-09 & -3.0196284e+02& 2.19e+01 & 255/ 5/ 0  \\
m = 1250 & Gurobi & 1.57e-13 & 0.00e+00 & 8.64e-08 & -3.0196284e+02& 8.66e+01 & \\
$\kappa=0.01$ & PASA & 3.76e-15 & 1.45e-07 & 2.38e-08 & -3.0196284e+02& 5.20e+01 & \\
\hline

n = 5000 & Algorithm 1 & 3.93e-15 & 7.99e-07 & 5.53e-07 & -1.3398205e+02& 4.50e+01 & 250/ 9/ 0  \\
m = 2500 & Gurobi & 9.88e-14 & 0.00e+00 & 2.55e-09 & -1.3398205e+02& 1.55e+02 & \\
$\kappa=0.10$ & PASA & 1.14e-14 & 5.35e-08 & 5.34e-08 & -1.3398205e+02& 1.08e+02 & \\
\hline

n = 5000 & Algorithm 1 & 1.09e-13 & 9.29e-07 & 5.22e-09 & -1.5828515e+02& 4.15e+01 & 214/10/ 0  \\
m = 2500 & Gurobi & 2.52e-13 & 0.00e+00 & 1.97e-08 & -1.5828515e+02& 1.43e+02 & \\
$\kappa=0.01$ & PASA & 1.22e-14 & 3.04e-07 & 1.24e-07 & -1.5828515e+02& 1.34e+02 & \\
\hline

n = 10000 & Algorithm 1 & 2.13e-14 & 3.58e-07 & 7.08e-10 & -7.6102357e+02& 5.61e+01 & 201/ 5/ 0 \\
m = 2500 & Gurobi & 5.48e-14 & 0.00e+00 & 1.36e-08 & -7.6102357e+02& 7.76e+02 &  \\
$\kappa=0.10$ & PASA & 1.27e-14 & 1.48e-07 & 7.59e-08 & -7.6102357e+02& 2.49e+02 &  \\
\hline

n = 10000 & Algorithm 1 & 6.96e-12 & 9.15e-07 & 7.94e-10 & -6.2019982e+02& 1.02e+02 & 306/ 5/ 0  \\
m = 2500 & Gurobi & 1.17e-13 & 0.00e+00 & 4.05e-08 & -6.2019982e+02& 7.49e+02 & \\
$\kappa=0.01$ & PASA & 2.29e-14 & 2.71e-07 & 2.36e-07 & -6.2019982e+02& 3.28e+02 & \\
\hline

n = 10000 & Algorithm 1 & 3.20e-16 & 2.96e-07 & 5.74e-10 & -2.9256608e+02& 2.33e+02 & 310/10/ 0 \\
m = 5000 & Gurobi & 2.31e-14 & 0.00e+00 & 8.82e-09 & -2.9256608e+02& 1.29e+03 & \\
$\kappa=0.10$ & PASA & 1.08e-14 & 2.85e-07 & 4.85e-07 & -2.9256608e+02& 7.79e+02 & \\
\hline

n = 10000 & Algorithm 1 & 1.12e-13 & 9.39e-07 & 7.17e-10 & -3.3885347e+02& 2.22e+02 & 295/ 9/ 0 \\
m = 5000 & Gurobi & 7.01e-14 & 1.19e-13 & 1.93e-08 & -3.3885347e+02& 1.26e+03 & \\
$\kappa=0.01$ & PASA & 1.73e-14 & 1.99e-07 & 1.10e-07 & -3.3885347e+02& 8.70e+02 & \\
\hline
\end{longtable}
\end{tiny}
\end{center}
}

From the data in Table~\ref{TCQP_random}, it is clear that all three methods can solve the problems to the required accuracy. Among them, Algorithm 1 is often the 
most efficient, particularly 
on larger problems. While the time saved isn't huge, it's still impressive, considering that Gurobi is a well-established commercial solver known for its quick solutions to convex quadratic programming problems. It's worth mentioning that we didn't put any restrictions on the number of threads Gurobi could use, which means that it could take full advantage of parallel computation with up to 24 threads on our workstation. 

Despite Algorithm 1 performing better than Gurobi in our tests, it would be premature to claim 
that it is a superior algorithm for all convex quadratic programming instances. The reason Algorithm 1 did so well in these instances is largely because the random matrix $Q$ is fully dense, which tends to slow down Gurobi's interior point method due to the time-consuming dense Cholesky factorization of an $n$ by $n$ matrix. Also, the problems we generated randomly don't usually have degeneracy issues, thus allowing Algorithm 1, which is a first-order method, to perform efficiently. {Our algorithm never switches to the projected gradient step because the optimal solution of a randomly generated problem is typically smooth and the diagonal elements in the LDL decomposition of \eqref{eqproj}
is bounded away from zero.} On the other hand, when dealing with problems where the matrices $Q$ and $A$ are highly sparse and have conducive structures, Gurobi is likely to far surpass our algorithm. We've included several examples from the Maros-M\'esz\'aros benchmark dataset \cite{maros1999repository} to illustrate this point in Table~\ref{TCQP_MM}.

{
\begin{center}
\begin{tiny}
\begin{longtable}{|c|c|cccccc|}
\caption{Comparison of Algorithm 1, \texttt{PASA} and Gurobi for selected problems from Maros-M\'esz\'aros benchmark dataset.}
\label{TCQP_MM}
\\
\hline
problem & algorithm & Rp & Rd & Rc& obj & time & iter/ldl/pg \\ \hline
\endhead
AUG2DCQP & Algorithm 1 & 5.16e-13 & 4.48e-14 & 4.38e-13 & 4.5649190e+04& 7.37e-01 & 10/11/ 1 \\
n=20200 & Gurobi & 3.86e-15 & 0.00e+00 & 1.84e-06 & 4.5649190e+04& 1.93e-01 & \\
m=10000 & PASA & 3.90e-15 & 2.42e-17 & 7.78e-07 & 4.5649190e+04& 3.75e-01 & \\
\hline
AUG2DQP & Algorithm 1 & 6.24e-15 & 8.25e-07 & 5.34e-09 & 4.4254158e+04& 1.65e+00 & 55/22/ 5 \\
n=20200 & Gurobi & 4.32e-15 & 0.00e+00 & 1.51e-06 & 4.4254158e+04& 1.81e-01 & \\
m=10000 & PASA & 8.22e-15 & 1.32e-08 & 5.84e-07 & 4.4254158e+04& 2.20e-01 & \\
\hline
BOYD1 & Algorithm 1 & 1.73e-14 & 9.99e-07 & 6.79e-07 & -6.3853592e+01& 1.05e+02 & 3265/410/66 \\
n=93261 & Gurobi & 2.05e-14 & 0.00e+00 & 7.60e-08 & -6.3853593e+01& 1.20e+00 & \\
m=18 & PASA & 1.89e-14 & 1.60e-07 & 3.86e-07 & -6.3853593e+01& 6.10e+00 & \\
\hline
CONT-100 & Algorithm 1 & 3.47e-13 & 1.33e-08 & 8.36e-08 & -4.6329622e+02& 1.55e+00 & 15/ 8/ 1 \\
n=10197 & Gurobi & 6.03e-14 & 0.00e+00 & 6.27e-09 & -4.6329622e+02& 1.96e-01 & \\
m=9801 & PASA & 7.04e-14 & 2.73e-09 & 1.67e-07 & -4.6329622e+02& 3.56e-01 & \\
\hline
CONT-101 & Algorithm 1 & 7.04e-14 & 6.27e-07 & 1.70e-07 & 3.7629251e+01& 3.22e+00 & 46/21/ 0 \\
n=9900 & Gurobi & 1.28e-10 & 0.00e+00 & 6.81e-08 & 3.7629251e+01& 2.18e-01 & \\
m=9801 & PASA & 2.30e-12 & 4.13e-08 & 4.62e-08 & 3.7629251e+01& 6.54e-01 & \\
\hline
CONT-200 & Algorithm 1 & 1.24e-12 & 2.21e-08 & 9.85e-07 & -9.1288126e+02& 1.27e+01 & 13/ 7/ 1 \\
n=40397 & Gurobi & 1.32e-13 & 0.00e+00 & 6.95e-08 & -9.1288126e+02& 7.90e-01 & \\
m=39601 & PASA & 2.80e-13 & 3.99e-07 & 1.26e-04 & -9.1288126e+02& 3.09e+00 & \\
\hline
CONT-201 & Algorithm 1 & 1.37e-13 & 2.06e-08 & 6.77e-07 & 7.3420291e+01& 4.92e+01 & 80/39/ 1 \\
n=39800 & Gurobi & 1.26e-09 & 0.00e+00 & 6.80e-08 & 7.3420291e+01& 1.03e+00 & \\
m=39601 & PASA & 3.64e-10 & 1.22e-07 & 5.28e-06 & 7.3420286e+01& 8.48e+00 & \\
\hline
\end{longtable}
\end{tiny}
\end{center}
}

From Table~\ref{TCQP_MM}, we can see that Gurobi performs significantly better than Algorithm 1, solving most of the problems within 1 second. This is {within} our expectation because the selected problems in the Maros-M\'esz\'aros benchmark dataset are very sparse and the matrices $Q$'s are diagonal. In this case, the per-iteration complexity of the interior point method is comparable to that of Algorithm 1. Remarkably, \texttt{PASA} also behaves quite well and is usually several times faster than Algorithm 1. One reason is that \texttt{PASA} makes use of low-rank update of Cholesky decomposition that also exploits the sparsity \cite{hager2016projection,davis2009dynamic}. Its efficiency in solving sparse convex quadratic problems has already been shown in the numerical results of \cite{hager2023algorithm}. Actually, as mentioned in \cite{hager2023algorithm}, the algorithms implemented in \texttt{PASA} have been developed for over 20 years.
Compared with them, our implementation of Algorithm 1 is still rather primitive and we believe that it has ample room for improvements
by using similar sparse matrix computation techniques
as those in \texttt{PASA}.

Our goal in this paper is not in developing a general solver that beats the existing ones. 
Rather, we are more interested to demonstrate that through Hadamard parametrization and the uncovered geometric structures, Riemannian based
optimization methods can be competitive in solving 
some specific non-convex programming problems over
a polyhedral set, which has recently become increasingly popular and found various applications. In the next subsection, we will discuss such an example.


\subsection{Gromov-Wasserstein learning}
In this paper, we are particularly interested in computing the following Gromov-Wasserstein distance (GWD)
\begin{equation}\label{GWDs}
{\rm GW}\(A,B,\mu,\nu\):=\min\Big\{ \frac{\mu^\top (A\circ A)\mu+\nu^\top (B\circ B)\nu}{2}-\<X,AXB\>:\ X\in \Pi(\mu,\nu) \Big\},
\end{equation}
where $A\in \S^m,$ $B\in \S^m$ are symmetric distance matrices, $\mu\in \R^m_+,$ $\nu\in \R^n_+$ are given probability vectors and the set $\Pi(\mu,\nu)$ is defined as follows:
\begin{equation}\label{Pi}
\Pi(\mu,\nu):= \left\{X\in \R^{m\times n}_+\,:\ X{\bf 1}_n=\mu,\ X^\top {\bf 1}_m=\nu\right\}.
\end{equation}
Problem (\ref{GWDs}) was first proposed by M{\'e}moli in \cite{memoli2011gromov} and has wide range of applications including graph partition and alignment \cite{xu2019gromov}, molecule analysis \cite{titouan2019sliced} and generative modeling \cite{bunne2019learning}. 

In our experiments, we consider the Gromov-Wasserstein distance in graph matching, where matrices $A$ and $B$ are adjacent matrices of some simple undirected graphs. We generate the graphs that imitate K-NN (K-nearest neighbors) graphs with certain randomness following the procedure mentioned in subsection 5.1 of \cite{xu2019gromov}. For completeness, we restate the data generation procedure. We generate the source graph $G(\V_s,\E_s)$ and target graph $G(\V_t,\E_t)$ as follows. For every $v\in \V_s,$ we randomly select $K\sim{\rm Poisson}\(0.1\times |\V_s|\)$ neighbors from $\V_s\setminus v,$ each of which has $w\sim{\rm Poisson}(10)$ interactions to $v.$ The target graph $G\(\V_t,\E_t\)$ is constructed by adding $q\%$ noise nodes and generating $q\%$ noisy edges via the simulation method. We choose $\mu={\bf 1}_m/m$ 
and $\nu={\bf 1}_{n}/n.$

{
\begin{center}
\begin{tiny}
\begin{longtable}{|c|c|cccccc|}
\caption{Comparison of Algorithm 1 and \texttt{PASA} for computing the Gromov-Wasserstein distance in graph matching problems. $m=|\V_s|,$ $p=q\%.$}
\label{TGWD}
\\
\hline
problem & algorithm & Rp & Rd & Rc& obj & time & iter/ldl/pg \\ \hline
\endhead
m = 100 & Algorithm 1 & 3.18e-11 & 6.79e-07 & 1.09e-09 & 7.2024465e-01 & 4.50e-01 & 61/ 7/ 9 \\
p = 1.00e-01 & PASA & 3.71e-15 & 7.25e-16 & 1.45e-18 & 7.2047603e-01& 3.48e-01 & \\
\hline
m = 100 & Algorithm 1 & 3.21e-11 & 4.75e-10 & 2.39e-12 & 1.6689367e+00 & 3.74e-01 & 72/29/12 \\
p = 3.00e-01 & PASA & 3.08e-15 & 2.28e-13 & 1.44e-15 & 1.6682743e+00& 2.13e-01 & \\
\hline
m = 500 & Algorithm 1 & 6.49e-11 & 8.12e-07 & 3.10e-10 & 7.7489653e-01 & 3.29e+00 & 59/19/12 \\
p = 1.00e-01 & PASA & 2.52e-11 & 1.31e-14 & 1.80e-16 & 7.7489646e-01& 1.47e+01 & \\
\hline
m = 500 & Algorithm 1 & 4.14e-11 & 3.52e-08 & 1.76e-11 & 1.8476054e+00 & 8.20e+00 & 85/68/23 \\
p = 3.00e-01 & PASA & 2.86e-17 & 1.34e-09 & 7.57e-14 & 1.8475701e+00& 4.45e+01 & \\
\hline
m = 1000 & Algorithm 1 & 8.55e-11 & 3.59e-07 & 2.34e-11 & 7.7506635e-01 & 1.74e+01 & 77/35/14 \\
p = 1.00e-01 & PASA & 1.43e-17 & 4.40e-15 & 4.25e-18 & 7.7506581e-01& 1.90e+02 & \\
\hline
m = 1000 & Algorithm 1 & 6.75e-13 & 7.31e-07 & 3.24e-10 & 1.8546431e+00 & 5.24e+01 & 185/260/38 \\
p = 3.00e-01 & PASA & 5.73e-18 & 4.51e-10 & 1.64e-14 & 1.8546203e+00& 5.43e+02 & \\
\hline
m = 1500 & Algorithm 1 & 5.61e-11 & 5.61e-07 & 1.75e-12 & 7.8224560e-01 & 7.33e+01 & 101/52/17 \\
p = 1.00e-01 & PASA & 9.21e-13 & 2.13e-13 & 1.09e-16 & 7.8224486e-01& 6.21e+02 & \\
\hline
m = 1500 & Algorithm 1 & 5.19e-11 & 7.69e-07 & 4.26e-12 & 1.8720793e+00 & 1.58e+02 & 237/348/67 \\
p = 3.00e-01 & PASA & 8.25e-12 & 5.04e-14 & 1.88e-17 & 1.8720697e+00& 2.21e+03 & \\
\hline
m = 2000 & Algorithm 1 & 9.01e-11 & 9.85e-07 & 7.59e-11 & 7.8735106e-01 & 1.39e+02 & 100/28/16 \\
p = 1.00e-01 & PASA & 3.06e-18 & 2.01e-15 & 4.96e-19 & 7.8734838e-01& 2.12e+03 & \\
\hline
m = 2000 & Algorithm 1 & 9.86e-11 & 8.33e-07 & 4.96e-12 & 1.8848184e+00 & 3.79e+02 & 161/209/58 \\
p = 3.00e-01 & PASA & - & - & - & -& - & \\
\hline
\end{longtable}
\end{tiny}
\end{center}
}

We compare our Algorithm 1 with \texttt{PASA} for solving the 
GW problem \eqref{GWDs} arising from graph matching problems. We did not include Gurobi because it cannot handle problems
with nonconvex objective functions. The results are presented in Table~\ref{TGWD}.
From the table, we can see that Algorithm 1 can solve all the problems to the required accuracy, while \texttt{PASA} cannot solve the last instance within 10 hours. Moreover, Algorithm 1 is  faster than \texttt{PASA} for larger problems with $m\geq 500$. {For this experiment, the projected gradient steps of Algorithm 1 is much greater than that of the previous two examples. The reason is that all the problems are degenerate and the number of nonzero entries is usually smaller than than the number of constraints.}
 Note that although we set the tolerance of \texttt{PASA} to be $10^{-6}$ using \texttt{pasadata.pasa.grad\_tol = 1e-6}, the KKT residuals of the outputs of \texttt{PASA} are usually close to machine precision. One reason is that this solver combines the projected gradient method and an active set method. Thus, once it identifies the correct active set of a certain stationary point, it can find that stationary point very accurately.


Apart from (GWD), our algorithm can also compute the following more complicated Gromov-Wasserstein berycenter (GWBC) problem:
\begin{equation}\label{GWBC}
\min\left\{ \sum_{i=1}^k \lambda_i {\rm GW}\(C,C_i,\mu,\nu_i\):\ C\in \R^{m\times m} \right\},
\end{equation}
where $\lambda_i\geq 0$ such that $\sum_{i=1}^k \lambda_i=1,$ $C_i\in \R^{m\times n_i}$,
$i\in[k]$ are given distance matrices,
 $\mu\in \R^m_+$, 
 $\nu_i\in \R^{n_i}_+$, $i\in[k]$, are given probability vectors.
  Problem (\ref{GWBC}) was proposed by Peyr{\'e} et al. in \cite{peyre2016gromov}, where it was applied in shape interpolation and quantum chemistry. In problem (\ref{GWDs}), the variable $C$ is the barycenter of the data matrices $C_1,\ldots,C_k.$ Notably, computing $ {\rm GW}\(C,C_i,\mu,\nu_i\)$ involves another optimization problem with variables $X_i\in \Pi\(\mu,\nu_i\)$ for $i\in[k]$. After combining the outer and inner problems, we get the following non-convex optimization problem over a convex polyhedron whose objective function is a cubic polynomial:
\begin{equation}\label{GWBCs}
\min\left\{ \sum_{i=1}^k \lambda_i\( \frac{\mu^\top (C\circ C)\mu+\nu_i^\top 
(C_i\circ C_i)\nu_i}{2}-\<X_i,CX_iC_i\> \) \,:\, 
 \begin{array}{l} 
 C\in \R^{m\times m}, \\
  X_i\in \Pi\(\mu,\nu_i\) \;\forall\; i\in[k] 
  \end{array}\right\}.
\end{equation}
Problem (\ref{GWDs}) is very challenging to solve and currently the state-of-the-art approach is
based on the block-coordinate minimization method, with its subproblems solved by a Sinkhorn-type method \cite{xu2019scalable}. Observe that when $X_i$'s are fixed, the variable $C$ has the closed form solution 
$C=\frac{1}{\mu\mu^\top} \circ \sum_{i=1}^k \lambda_iX_i C_i X_i^\top,$ where
$\frac{1}{\mu\mu^\top}$ denotes the elementwise reciprocal of $\mu\mu^\top$.
Thus we can substitute this formula into \eqref{GWBCs} to get  the following simplified problem
after some manipulations:

\begin{multline}\label{GWBCss}
\min\Bigg\{ \sum_{i=1}^k \frac{1}{2} \lambda_i\nu_i^\top (C_i\circ C_i)\nu_i
-\frac{1}{2} \(\frac{1}{\mu}\)^\top
\( \Big(\sum_{i=1}^k \lambda_iX_i C_i X_i^\top\Big) \circ \Big(\sum_{i=1}^k \lambda_iX_i C_i X_i^\top\Big) \)
\frac{1}{\mu}:\\ 
\mu \in \R^m_+,\;
X_i\in \Pi\(\mu,\nu_i\),\ i\in [k] \Bigg\}.
\end{multline}
In the above, $\frac{1}{\mu}$ denotes the elementwise reciprocal of $\mu$.
Problem (\ref{GWBCss}) is an optimization problem over a convex polyhedral set, so we can directly use our Riemannian based optimization method to find a stationary point of it. 
{Note that for \eqref{GWBCss}, the total dimension of the variables
is $m + m\sum_{i=1}^k n_i$, which can be quite large even just for $m=n_i=1000$ for all $i\in[k]$.}

One application of Gromov-Wasserstein barycenter is multi-graph matching problem, where we match a source graph to many target graphs simultaneously. This problem has been tested in \cite{xu2019scalable} using protein-protein interaction (PPI) network\footnote{The dataset is available on \href{https://www3.nd.edu/~cone/MAGNA++/}{https://www3.nd.edu/~cone/MAGNA++/}.} of yeast with its noisy version \cite{vijayan2015magna++}. In our experiment, we test the same problem on the same dataset. We match the original PPI network with its first $k$ ($k=2,3,4,5$) noisy versions with $5\%,10\%,15\%,20\%,25\%$ noisy edges. We set $\lambda_i=1/k$ and $\mu,v_i$ to be ${\bf 1}_n/n,$ where $n$ is the uniform size of all the networks. Because of the significant non-convexity of problem (\ref{GWBCss}), we find that the outputs of Algorithm 1 and \texttt{PASA} are sometimes quite different, making it difficult for us to compare their efficiency. Concerning this issue, we use the {\sc Matlab}
 solver based on block-coordinate minimization\footnote{Source codes from \href{https://github.com/gpeyre/2016-ICML-gromov-wasserstein}{https://github.com/gpeyre/2016-ICML-gromov-wasserstein}.} in \cite{peyre2016gromov} to warm-start Algorithm 1 and \texttt{PASA}. Because this solver doesn't have a name, we call it \texttt{BCM}. \texttt{BCM} solves an entropy-regularized version of (\ref{GWBC}), where we choose the regularization parameter to be $10^{-3}.$ We find that the parameter $10^{-3}$ works reasonably well and a smaller value will cause some numerical issue for \texttt{BCM}.  Note that \texttt{BCM} solves (\ref{GWBCs}) instead of (\ref{GWBCss}) and the multiplier is not returned. Therefore, we will only use the variable $X_i$'s from \texttt{BCM} and apply one step of projected gradient to recover the multiplier. We use the code \texttt{PPROJ} in \texttt{SuiteOPT} to compute the projection onto the polyhedral set. By doing this, we can show the output information of \texttt{BCM} to demonstrate that both Algorithm 1 and \texttt{PASA} further improve the solution of \texttt{BCM}.

{

\begin{center}
\begin{tiny}
\begin{longtable}{|c|c|cccccc|}
\caption{Comparison of Algorithm 1, \texttt{PASA}, and \texttt{BCM} for computing Gromov-Wasserstein barycenter in multi-graph matching problems. All the graphs have $1004$ vertices.}
\label{TGWBC}
\\
\hline
problem & algorithm & Rp & Rd & Rc& obj & time & iter/ldl/pg \\ \hline
\endhead
3 graphs& Algorithm 1 & 9.94e-11 & 4.69e-07 & 1.98e-09 & 1.8562315e-04& 1.73e+01 & 50/18/ 5 \\
& Sinkhorn & 1.20e-16 & 4.93e-05 & 7.59e-08 & 1.8725382e-04& 3.88e+01 & \\
& PASA & 2.29e-18 & 4.02e-10 & 1.05e-12 & 1.8562316e-04& 1.08e+02 & \\
\hline
4 graphs & Algorithm 1 & 1.11e-10 & 6.08e-07 & 1.46e-09 & 2.6363669e-04& 2.24e+01 & 60/24/ 6 \\
& Sinkhorn & 1.41e-16 & 4.01e-05 & 8.74e-08 & 2.6468280e-04& 1.13e+02 & \\
& PASA & 1.55e-18 & 1.48e-08 & 7.70e-15 & 2.6407071e-04& 1.07e+02 & \\
\hline
5 graphs & Algorithm 1 & 7.12e-11 & 2.15e-07 & 4.23e-10 & 3.4174061e-04& 2.70e+01 & 90/64/ 8 \\
& Sinkhorn & 2.86e-17 & 1.15e-04 & 5.64e-07 & 3.5201978e-04& 1.11e+02 & \\
& PASA & 2.53e-17 & 9.05e-07 & 5.15e-11 & 3.4231615e-04& 3.96e+02 & \\
\hline
6 graphs & Algorithm 1 & 1.33e-10 & 2.08e-07 & 5.28e-10 & 4.1902994e-04& 3.65e+01 & 93/75/ 8 \\
& Sinkhorn & 7.52e-18 & 9.79e-05 & 6.88e-07 & 4.2649783e-04& 1.27e+02 & \\
& PASA & 9.17e-18 & 4.85e-18 & 1.75e-19 & 4.1732142e-04& 2.46e+02 & \\
\hline
\end{longtable}
\end{tiny}
\end{center}
}

From Table~\ref{TGWBC}, we can see that both Algorithm 1 and \texttt{PASA} improve the outputs of \texttt{BCM}, which don't reach the accuracy of $10^{-6}.$ Algorithm 1 is significantly faster than \texttt{PASA} for all the examples. However, in terms of accuracy, neither Algorithm 1 nor \texttt{PASA} is consistently better than the other in that the function value of Algorithm 1 can be both slightly smaller or slightly larger than those of \texttt{PASA}. This is within our expectation because problem (\ref{GWBCss}) is {non-convex.} 


\section{Conclusion}\label{Sec-conc}
{In this work, we study the method of Hadamard parametrization for optimization problems 
over a convex polyhedron. In order to study the optimality conditions without the 
LICQ condition, we derive  explicit formulas of the tangent cone and second-order tangent set at a given point in the parametrized polyhedron, which is an algebraic variety. In order to extend Riemannian optimization method to solve the new problem on an algebraic variety, we propose a manifold decomposition method that decomposes the parametrized polyhedron into a finite disjoint union of Riemannian manifolds. We also design a globally convergent hybrid algorithm that adaptively chooses a submanifold and applies Riemannian gradient descent on it. {Our hybrid algorithm uses a projected gradient method to jump from one submanifold to another when the Riemannian gradient descent method stalls or when a nearly singular iterate is encountered.} Numerical experiments are conducted to demonstrate the efficacy
of our algorithm as compared to other state-of-the-art algorithms.
Our algorithm is a first-order method that only uses the function value and gradient information of the objective function. In the future, we will study how to incorporate second-order information in our algorithmic framework. Also, currently, we have not {established} any convergence rate for our algorithm due to the non-smooth nature of the algebraic variety. It would be interesting to study the convergence rate of optimization algorithms on an algebraic variety. 
}

\appendix

%
%
%
%
%

\section{Some useful results}

\begin{lemma}\label{LDL}
Suppose $A\in \R^{n\times n}$ is a symmetric positive semidefinite matrix, then it has an LDL decomposition $A=L\DD(d)L^\top$ such that $L$ is a lower triangular matrix with $\dd(L)=e,$ and $d\in \R^n_+.$ Moreover, the vector $d$ is unique.
\end{lemma}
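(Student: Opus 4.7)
The plan is to prove both existence and uniqueness by induction on $n$, with the base case $n = 1$ being trivial: take $L = [1]$ and $d = [A_{11}]$. For the inductive step, partition $A = \begin{pmatrix} a_{11} & b^\top \\ b & C \end{pmatrix}$ and split into two cases according to whether $a_{11} > 0$ or $a_{11} = 0$.

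When $a_{11} > 0$, I would use the classical Schur complement construction: set $d_1 = a_{11}$, define the subdiagonal of the first column of $L$ to be $b/a_{11}$, and apply the induction hypothesis to the Schur complement $C - bb^\top/a_{11}$, which is standardly PSD of size $(n-1)$. When $a_{11} = 0$, I would first invoke the well-known consequence of positive semidefiniteness that a zero diagonal entry forces the entire corresponding row and column to vanish (quickly seen from the $2\times 2$ principal minor $\det\begin{pmatrix} 0 & b_i \\ b_i & c_{ii} \end{pmatrix} = -b_i^2 \geq 0$). Hence $b = 0$, so I can set $d_1 = 0$, leave the subdiagonal of the first column of $L$ unspecified (choose zero for concreteness), and apply induction to $C$, which is PSD as a principal submatrix of $A$.

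For uniqueness of $d$, I would proceed with the same case split. Comparing the $(1,1)$ entries of $A$ and $LDL^\top$ forces $d_1 = A_{11}$. When $d_1 > 0$, the first column of $L$ is then forced by $A_{i,1} = L_{i,1} d_1$, and the lower-right block satisfies the Schur complement equation, whose $d$-vector is unique by the induction hypothesis. When $d_1 = 0$, the subdiagonal of the first column of $L$ is multiplied entrywise by $d_1 = 0$ in the expansion of $LDL^\top$ and drops out, so the lower-right block of the identity $A = LDL^\top$ reduces to an LDL decomposition of $C$ whose multiplier vector $(d_2, \ldots, d_n)$ is unique by induction.

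The only delicate point is precisely this degenerate case $a_{11} = 0$: here $L$ itself need not be unique, so the uniqueness claim for $d$ has to be extracted from the observation that the ambiguous entries of $L$ are annihilated by the zero pivot and hence invisible to the recursion. Everything else is a direct unwinding of the standard Cholesky argument adapted to allow zero pivots.
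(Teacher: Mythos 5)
Your proof is correct, but it takes a genuinely different route from the paper's. For existence, the paper simply cites a reference (Theorem 2.2.6 of the cited semidefinite optimization notes), whereas you give a self-contained induction on $n$ with the standard Schur-complement recursion, correctly handling the degenerate pivot $a_{11}=0$ via the observation that a zero diagonal entry of a PSD matrix kills its whole row and column. For uniqueness, the paper's argument is a short algebraic cancellation: from $L_1\DD(d_1)L_1^\top=L_2\DD(d_2)L_2^\top$ it forms $L_2^{-1}L_1\DD(d_1)=\DD(d_2)L_2^\top L_1^{-\top}$, notes that the left side is lower triangular with diagonal $d_1$ and the right side is upper triangular with diagonal $d_2$, and reads off $d_1=d_2$ — no case analysis and no induction. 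Your inductive uniqueness argument is also valid, and you correctly isolate the one delicate point (when $d_1=0$ the first column of $L$ is not determined, but it is annihilated by the zero pivot so the recursion on $d$ is unaffected); the trade-off is that your route is longer and case-based, while being fully self-contained, whereas the paper's is a two-line global argument that relies on an external existence result.
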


\begin{proof}
We only prove the uniqueness of $d$ because the existence of LDL decomposition of $A$ comes from Theorem 2.2.6 in \cite{so2007semidefinite}. Suppose $A$ has two LDL decompositions, i.e.,$A=L_1\DD(d_1)L_1^\top=L_2\DD(d_2)L_2^\top.$ Because $L_1,L_2$ have diagonal entries being $1$'s, they are invertible. We have that 
\begin{equation}\label{linalg1}
L_2^{-1}L_1\DD(d_1)=\DD(d_2)L_2^\top L_1^{-\top},
\end{equation}
 where $L_2^{-1}L_1$ is a lower triangular matrix such that $\dd\(L_2^{-1}L_1\)=e$ and $L_2^\top L_1^{-\top}$ is an upper triangular matrix such that $\dd( L_2^\top L_1^{-\top} )=e.$
  From (\ref{linalg1}), we know that $d_1=\dd(L_2^{-1}L_1\DD(d_1))=\dd(\DD(d_2)L_2^\top L_1^{-\top})=d_2.$
\end{proof}

\begin{lemma}\label{sigmaregular}
For any $\sigma>0$ and $y\in \PA$ with regular representation $(M,N,r)$ and $x=y\circ y,$ the set $\M_\sigma(y)$ defined in Definition~\ref{defi-sing} is closed.
\end{lemma}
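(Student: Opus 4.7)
The plan is to establish closedness by showing that any convergent sequence $\{z_k\}\subset \M_\sigma(y)$ with limit $z^*$ satisfies $z^*\in \M_\sigma(y).$ Set $x_k:=z_k\circ z_k$, $x^*:=z^*\circ z^*$, and $B(z):=A\DD(z\circ z)A^\top$, so that $B(z_k)\to B(z^*).$ By continuity and closedness, $MA(z^*\circ z^*)=Mb$, $\supp(z^*)\subset \I$, and $NA\DD(z^*)=0$, which yields $\rr(A\DD(z^*))\leq r.$ The non-trivial tasks are to upgrade this rank inequality to equality and to verify $d_{i_r}(B(z^*))\geq \sigma.$

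The central structural observation is that the LDL pivot pattern is constant on $\M(y).$ Every $z\in \M(y)$ satisfies $NA\DD(z)=0$ (shown in the proof of Proposition~\ref{equitheo}) together with $\rr(A\DD(z))=r,$ so $\Ker[B(z)]=\Ran[N^\top]$ for all such $z.$ On the other hand, for any symmetric PSD matrix $B$ with LDL decomposition $B=L\DD(d)L^\top,$ an inductive Schur-complement argument --- whenever a diagonal pivot of the running Schur complement is zero, positive semidefiniteness forces the whole corresponding row to vanish --- shows that $d_k>0$ iff $e_k\notin \mathrm{span}(e_1,\ldots,e_{k-1})+\Ker[B].$ Applying this to $B(z)$ for $z\in \M(y),$ the pivot index set
\[
I:=\bigl\{k\in [m]:\ e_k\notin \mathrm{span}(e_1,\ldots,e_{k-1})+\Ran[N^\top]\bigr\}
\]
is independent of $z$ and has cardinality $r.$ Furthermore, another unrolling of LDL (using that at non-pivot steps the Schur-complement row is already zero, so the step is trivial) reveals that the principal submatrix $B(z)_{I,I}$ is positive definite and its $r$ LDL pivots coincide with the $r$ positive entries of $d(B(z)).$ Hence $z$ is $\sigma$-regular iff the smallest LDL pivot of the PD matrix $B(z)_{I,I}$ is at least $\sigma.$

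Closedness then follows from standard continuity. Along the sequence, $B(z_k)_{I,I}\to B(z^*)_{I,I},$ each $B(z_k)_{I,I}$ is PD, and $\det B(z_k)_{I,I}=\prod_{i\in I} d_i(B(z_k))\geq \sigma^r.$ Thus $\det B(z^*)_{I,I}\geq \sigma^r>0$ and $B(z^*)_{I,I}$ is itself PD. Since the LDL decomposition is continuous on the open cone of PD matrices, the pivots of $B(z^*)_{I,I}$ are the limits of those of $B(z_k)_{I,I}$ and therefore each $\geq \sigma.$ The existence of a PD principal submatrix of size $r$ gives $\rr(A\DD(z^*))\geq r;$ combined with the earlier inequality we get $\rr(A\DD(z^*))=r,$ which together with the already-verified conditions yields $z^*\in \M(y).$ The pivot identification then produces $d_{i_r}(B(z^*))\geq \sigma,$ so $z^*\in \M_\sigma(y).$

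The main obstacle I expect is the rigorous establishment of the pivot characterization $d_k>0\Leftrightarrow e_k\notin \mathrm{span}(e_1,\ldots,e_{k-1})+\Ker[B]$ and the ensuing identification of the positive pivots of $B$ with those of $B_{I,I};$ both require careful inductive arguments on the LDL algorithm applied to rank-deficient PSD matrices, possibly using the pseudoinverse form of the Schur complement. Once these are in place, the remaining steps --- continuity of LDL on the open cone of PD matrices, the uniform determinant lower bound, and upper semicontinuity of rank --- are routine.
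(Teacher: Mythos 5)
Your proof is correct, but it takes a genuinely different route from the paper's. The paper argues by compactness of the LDL \emph{factors} along the sequence: after fixing the convention that the columns of $L_k$ at zero pivots are unit vectors, a trace bound shows $\{L_k\}$ and $\{d_k\}$ are bounded, so a subsequence converges to an LDL factorization of $A\DD(x^*)A^\top$; the uniqueness of the pivot vector (Lemma~\ref{LDL}) then identifies the limiting $d^*$, which inherits the pattern ``exactly $r$ nonzero entries, all at least $\sigma$'' from the subsequence. You instead isolate the structural pivot set $I$ (constant on $\M(y)$ because $\Ker\big[A\DD(z\circ z)A^\top\big]=\Ran[N^\top]$ there), reduce $\sigma$-regularity to positive definiteness plus a pivot bound on the fixed $r\times r$ block $B(z)_{I,I}$, and close with the determinant bound $\det B(z_k)_{I,I}\ge\sigma^r$ and continuity of LDL on the PD cone. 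The two facts you flag as obstacles are both true and cleanly provable: writing $B(z)=C^\top C$ with $C=\DD(z)A^\top$, the LDL of $B(z)$ is Gram--Schmidt on the columns $c_1,\dots,c_m$ of $C$, so $d_k$ is the squared distance from $c_k$ to ${\rm span}(c_1,\dots,c_{k-1})$; this yields $d_k>0\Leftrightarrow e_k\notin{\rm span}(e_1,\dots,e_{k-1})+\Ker[B]$, and since the greedily selected columns $\{c_j\}_{j\in I\cap[k-1]}$ span the same subspace as $c_1,\dots,c_{k-1}$, it also identifies the positive pivots of $B(z)$ with the LDL pivots of $B(z)_{I,I}=C_{:,I}^\top C_{:,I}$, which is PD because $\{c_j\}_{j\in I}$ is linearly independent. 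In terms of trade-offs: the paper's argument is shorter but must manage the non-uniqueness and potential unboundedness of the $L$ factor for singular PSD matrices; yours eliminates the $L$ factor entirely and localizes the analysis to a positive definite block where continuity of the factorization is automatic, at the cost of proving the two auxiliary LDL lemmas. Either version is a complete proof of Lemma~\ref{sigmaregular}.
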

\begin{proof}
Consider any $\{y_k\}_{k\in \mathbb{N}^+}\in \M_\sigma(y)$ such that $\lim_{k\rightarrow \infty} y_k=y^*.$ Let $x_k=y_k\circ y_k$ for any $k\in \mathbb{N}^+$ and $x^*=y^*\circ y^*.$ Let $L_k\DD(d_k)L_k^\top$ be the LDL decomposition of $A\DD(x_k)A^\top$ with $d_k \geq 0.$ Without loss of generality, assume that for any $j\in [m]\setminus \supp(d_k)$ and $i>j,$ $L_k(i,j)=0.$ This is because the entries of $d_k$ in $[m]\setminus\supp(d_k)$ are zero's, so the corresponding columns of $L_k$ doesn't influence the matrix multiplication. With this assumption, we have that 
\begin{eqnarray}
&& \<I,A\DD(x_k)A^\top\>=\< I,L_k\DD(d_k)L_k^\top \>
\nonumber \\
&\geq& \sigma \(\<I,L_kL_k^\top\>-(m-r)\)=\sigma\(\|L_k\|^2-(m-r)\).
\label{Sep_24_lem_1}
\end{eqnarray}
 Because $x_k\rightarrow x^*,$ $\{ \<I,A\DD(x_k)A^\top\> \}_{k\in \mathbb{N}^+}$ is bounded. From (\ref{Sep_24_lem_1}), we know that $\{L_k\}_{k\in \mathbb{N}^+}$ is also bounded. Moreover, because $\dd\(L_k\)=e,$ we have that $\< I,L_k\DD(d_k)L_k^\top \>\geq e^\top d_k.$ Thus, $\{d_k\}_{k\in \mathbb{N}^+}$ is also bounded. Hence there exists a subsequence $\{i_k\}_{k\in \mathbb{N}^+}$ such that $d_{i_k}\rightarrow d^*$, $L_{i_k}\rightarrow L^*$ and we have that $A\DD(x^*)A^\top=L^*\DD(d^*)L^{*\top}.$ 
 
It is easy to see that $\supp(d^*)=r$ and the smallest nonzero entries of $d^*$ is bounded below by $\sigma.$ Also, $\dd(L^*)=e.$ Moreover,
$N \dd(L^*\DD(d^*)L^{*\top})=\lim_{k\rightarrow\infty}N L_{i_k}\DD(d_{i_k})L_{i_k}^{\top}=0.$ Therefore, we have that $y^*\in \M_\sigma(y).$
\end{proof}

\begin{lemma}\label{lemconnec}
Consider $A\in \R^{m\times n}.$ Define the bipartite graph $H:=\([m]\times [n],\E\)$ whose biadjacency matrix corresponds to the underlying sparsity pattern of $A,$ that is, $(i,j)\in \E$ if and only if $A_{ij}\neq 0.$ Consider the linear operator $\Xi:\R^m\times \R^n\rightarrow \R^{m\times n}$ such that $\Xi(\lambda,\mu)=\DD(\lambda)A+A\DD(\mu).$ If $H$ is connected, then $\Ker\left[ \Xi \right]={\rm span}\( ({\bf 1}_m,-{\bf 1}_n) \).$
\end{lemma}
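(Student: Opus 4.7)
The plan is to observe that the linear operator $\Xi$ has a very simple entrywise description: $[\Xi(\lambda,\mu)]_{ij} = \lambda_i A_{ij} + A_{ij}\mu_j = A_{ij}(\lambda_i + \mu_j)$. Consequently, $(\lambda,\mu) \in \Ker[\Xi]$ if and only if $\lambda_i + \mu_j = 0$ for every edge $(i,j) \in \E$ of the bipartite graph $H$, since these are precisely the indices where $A_{ij} \neq 0$. This reformulation turns the problem into a purely combinatorial statement about propagating values along the edges of $H$.

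First I would pick any vertex, say $1 \in [m]$ on the left side, and fix $\lambda_1 = c$ for some $c \in \R$. Then for any neighbor $j$ of $1$ in $H$, the equation $\lambda_1 + \mu_j = 0$ forces $\mu_j = -c$. Continuing, for any left vertex $i'$ adjacent to such a $j$, we must have $\lambda_{i'} = -\mu_j = c$, and so on. By the connectedness of $H$, this propagation reaches every vertex of $[m] \sqcup [n]$, forcing $\lambda_i = c$ for all $i \in [m]$ and $\mu_j = -c$ for all $j \in [n]$. Hence any element of $\Ker[\Xi]$ is of the form $c(\mathbf{1}_m, -\mathbf{1}_n)$.

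Conversely, $(\mathbf{1}_m, -\mathbf{1}_n)$ clearly lies in $\Ker[\Xi]$ because $\lambda_i + \mu_j = 1 + (-1) = 0$ for all $(i,j)$, giving the reverse inclusion.

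I do not anticipate any substantial obstacle in this proof; the main idea is simply to translate the matrix equation $\DD(\lambda)A + A\DD(\mu) = 0$ into the system of scalar equations $\lambda_i + \mu_j = 0$ on the edges of $H$, and then exploit the connectedness of $H$ via a straightforward BFS/DFS-style propagation argument. The only minor care needed is to note that, since $H$ is bipartite, edges alternate between determining $\mu_j$ values (from known $\lambda_i$) and determining $\lambda_{i'}$ values (from known $\mu_j$), which is exactly what makes all $\lambda_i$ agree and all $\mu_j$ take the opposite common value.
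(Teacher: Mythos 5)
Your proof is correct. It takes a genuinely different, and arguably more transparent, route than the paper's. You reduce the kernel condition to the entrywise identity $[\Xi(\lambda,\mu)]_{ij}=A_{ij}(\lambda_i+\mu_j)$, so that $(\lambda,\mu)\in\Ker[\Xi]$ iff $\lambda_i+\mu_j=0$ on every edge of $H$, and then propagate the common value $c=\lambda_{i_0}$ along paths: since $H$ is bipartite and connected, induction on graph distance forces $\lambda=c\,{\bf 1}_m$ and $\mu=-c\,{\bf 1}_n$ (no consistency check between different paths is needed, because the values already exist and satisfy all edge constraints). The paper instead argues by contradiction: it normalizes a putative extra kernel element by subtracting a multiple of $({\bf 1}_m,-{\bf 1}_n)$ so that it has some zero entries, partitions the indices into supports $\I_1,\I_2,\J_1,\J_2$, shows all four are nonempty, and reads off from the block equation that $A_{\I_1,\J_2}=0$ and $A_{\I_2,\J_1}=0$, which would disconnect $H$. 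Your direct traversal avoids the normalization step and the nonemptiness case analysis, and it makes visible the slightly stronger fact that the kernel restricted to each connected component is at most one-dimensional; the paper's block-decomposition style has the advantage of matching the arguments used for Proposition~\ref{biparprop} and Proposition~\ref{manidecB}, where the same support-partition bookkeeping reappears.
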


\begin{proof}
Because $H$ is connected, it has no isolated vertices, which means that $A$ has no zero rows or columns. Because $\DD({\bf 1}_m)A+A\DD(-{\bf 1}_n)={\bf 0}_{m\times n},$ we have that $({\bf 1}_m,-{\bf 1}_n)\in \Ker\left[ \Xi \right].$ 
Now assume on the contrary that $\Ker\left[ \Xi \right]\neq {\rm span}\( ({\bf 1}_m,-{\bf 1}_n) \)$. Then there exists $\(\lambda,\mu\)\in \R^m\times \R^n$ such that $\DD(\lambda)A+A\DD(\mu)=0$ and $\(\lambda,\mu\)\notin {\rm span}\( ({\bf 1}_m,-{\bf 1}_n) \).$ Without loss of generality, assume that $\(\lambda,\mu\)$ contain some zero entries; if not,  
we can consider a suitable linear combination of $\(\lambda,\mu\)$ and $({\bf 1}_m,-{\bf 1}_n) $ to cancel some entries. 
Let $\I_1=\supp(\lambda),$ $\I_2=[m]\setminus \I_1$ and $\J_1\in \supp(\mu),$ $\J_2=[n]\setminus \J_1.$ Because $\Xi\( \lambda,\mu \)={\bf 0}_{m\times n},$ we have that

\begin{equation}\label{23Nov2_1}
\begin{bmatrix}
\DD(\lambda_{\I_1})A_{\I_1,\J_1}& \DD(\lambda_{\I_1})A_{\I_1,\J_2}\\
{\bf 0}_{|\I_2|\times |\J_1|}& {\bf 0}_{|\I_2|\times |\J_2|}
\end{bmatrix}
+
\begin{bmatrix}
A_{\I_1,\J_1}\DD(\mu_{\J_1})& {\bf 0}_{|\I_1|\times |\J_2|}\\
A_{\I_2,\J_1}\DD(\mu_{\J_1})&{\bf 0}_{|\I_2|\times |\J_2|}
\end{bmatrix}=0.
\end{equation}
We will next proceed to show that  $\I_1,\I_2,\J_1,\J_2$ are all nonempty.
Because $\( \lambda,\mu \)$ contains zero entries, without loss of generality, we may assume that $\I_2\neq \emptyset.$ Now, if $\J_2=\emptyset,$ then $\J_1=[n].$ From (\ref{23Nov2_1}), we get $A_{\I_2,:}={\bf 0}_{|\I_2|\times n},$ which contradicts the fact that $A$ has no zero rows. Therefore, $\J_2\neq\emptyset.$ 
Moreover, 
because $(\lambda,\mu)$ is nonzero, without loss of generality, we may assume that $\I_1\neq \emptyset.$ If $\J_1=\emptyset,$ then $\J_2=[n].$ From (\ref{23Nov2_1}), we get $A_{\I_1,:}={\bf 0}_{|\I_1|\times n},$ which again 
contradicts  the fact that $A$ has no zero rows. Thus, $\J_1\neq \emptyset.$ Therefore we have proved that $\I_1,\I_2,\J_1,\J_2$ are all nonempty. 

From (\ref{23Nov2_1}), we can see that $A_{\I_1,\J_2}={\bf 0}_{|\I_1|\times |\J_2|}$ and $A_{\I_2,\J_1}={\bf 0}_{|\I_2|\times |\J_1|}.$ This implies that $H$ is not connected, and we get a contradiction.
\end{proof}

\section{Proofs of some results}

\subsection{Proof of Proposition~\ref{Prop-tcone}}

\begin{proof}
Since $\T_{\PA}^i(y)\subset \T_{\PA}(y),$ we only have to prove that $\T_{\PA}(y)\subset \widehat{\T}(y)\subset \T_{\PA}^i(y).$

\noindent{\bf Step 1.} $\T_{\PA}(y)\subset \widehat{\T}(y).$ For any $h\in \T_{\PA}(y),$ from (\ref{CTcone}), we have that there exists $\{t_k\}_{\mathbb{N}^+}\subset \R_{+}$ such that $t_k\downarrow 0$ and $\dist(y+t_kh,\PA)=o(t_k).$ Consider $\{w_k\}_{\mathbb{N}^+}\subset \R^n$ such that $\norm{w_k}=o(t_k)$ and $y+t_kh+w_k\in \PA.$ Then we have that 
\begin{eqnarray}
& MA\( (y+t_kh+w_k)\circ (y+t_kh+w_k)  \)-Mb=0 &
\label{Oct_25_1}
\\[3pt]
& NA\( (y+t_kh+w_k)\circ (y+t_kh+w_k)  \)=0. &
\label{Oct_25_2}
\end{eqnarray}
Expanding (\ref{Oct_25_1}), we get $2t_kMA\( y\circ h  \)+o(t_k)=0$, which implies that $MA\( y\circ h  \)=0.$ Expanding (\ref{Oct_25_2}) and using $NA\Diag{y}=0$, we get $t_k^2NA\( h\circ h \)+o(t_k^2)=0,$ which implies that $NA\( h\circ h \)=0.$ Thus, $h\in \widehat{\T}(y)$ and so $\T_{\PA}(y)\subset \widehat{\T}(y).$

\noindent{\bf Step 2.} $\widehat{\T}(y)\subset \T^i_{\PA}(y).$ For any $h\in \widehat{\T}(y)$, since $w\rightarrow MA\( y\circ w \)$ is surjective, we can choose $w\in \R^n$ such that $MA\(h\circ h+y\circ w\)=0.$ Consider the mapping $\gamma:\R \rightarrow \R^n$ such that 
\begin{equation}\label{Oct_25_5}
\gamma(t):=y\circ y+2ty\circ h+t^2 h\circ h+t^2y\circ w.
\end{equation}
From $MA\(h\circ h+y\circ w\)=0$ and $MA\(y\circ h\)=0,$ we have that $MA\gamma(t)=Mb.$ From $NA\(h\circ h\)=0$ and $NA\Diag{y}=0,$ we have that $NA\gamma(t)=0=Nb.$ Thus, $[M;N]A\gamma(t)=[M;N]b$ and so $A\gamma(t)=b.$ From (\ref{Oct_25_5}), there exists $\delta>0$ such that for any $t\in [-\delta,\delta],$ $\gamma(t)\in \R^n_+.$ This implies that for any $t\in [-\delta,\delta],$ $\sqrt{\gamma(t)}\in \PA$, where the square-root denotes coordinate-wise operation. Simple Taylor expansion shows that $\sqrt{\gamma(t)}=y+th+o(t).$ Therefore, $\dist(y+th,\PA)\leq \dist(y+th,\sqrt{\gamma(t)}) =o(t)$ and $h\in \T_{\PA}^i(y).$ This means that $\widehat{\T}(y)\subset \T^i_{\PA}(y).$
\end{proof}

\subsection{Proof of Proposition~\ref{Prop-2tset}}

\begin{proof}
Since $\T^{i,2}_{\PA}(y,h)\subset \T^2_{\PA}(y,h),$ we only have to prove that $\T^2_{\PA}(y,h)\subset \widehat{\T^2}(y,h)\subset \T^{i,2}_{\PA}(y,h).$

\noindent{\bf Step 1.} $\T^2_{\PA}(y,h)\subset \widehat{\T^2}(y,h).$
For any $w\in \T^2_{\PA}(y,h),$ from (\ref{O2tset}), we have that there exists $\{t_k\}_{\mathbb{N}^+}\subset \R_+$ such that $t_k\downarrow 0$ and $\dist\(y+t_kh+t_k^2w/2,\PA\)=o(t_k^2).$ Consider $z_k\in \R^n$ such that $\norm{z_k}=o(t_k^2)$ and $y+t_kh+t_k^2w/2+z_k\in \PA.$ Then we have that
\begin{eqnarray}
&& MA\( \(y+t_kh+t_k^2w/2+z_k\)\circ \(y+t_kh+t_k^2w/2+z_k\)  \)-Mb=0,
\label{Oct_25_6}
\\[3pt]
&& VNA\( \(y+t_kh+t_k^2w/2+z_k\)\circ \(y+t_kh+t_k^2w/2+z_k\)  \)=0,
\label{Oct_25_7}
\\[3pt]
&& WNA\( \(y+t_kh+t_k^2w/2+z_k\)\circ \(y+t_kh+t_k^2w/2+z_k\)  \)=0.
\label{Oct_25_8}
\end{eqnarray}
Expanding (\ref{Oct_25_6}) and using $MA(y\circ h)=0,$ we get $t_k^2MA\( h\circ h+y\circ w \)+o(t_k^2)=0$, which implies that $MA\( h\circ h+y\circ w \)=0.$ Expanding (\ref{Oct_25_7}) and using $NA\Diag{y}=0,$ $NA(h\circ h)=0,$ we get $t_k^3 VNA \( h\circ w \)+o(t_k^3)=0,$ which implies that $VNA \( h\circ w \)=0.$ Expanding (\ref{Oct_25_8}) and using $NA\Diag{y}=0,$ $WNA\Diag{h}=0$ (coming from $VW^\top=0$), we get $t_k^4W\(w\circ w\)/4+o(t_k^4)=0,$ which implies that $WNA\(w\circ w\)=0.$ Therefore, we have that $w\in \widehat{\T^2}(y,h)$ and so $\T^2_{\PA}(y,h)\subset \widehat{\T^2}(y,h)$

\noindent{\bf Step 2.} $\widehat{\T^2}(y,h) \subset \T^{i,2}_{\PA}(y,h).$
Consider $w\in \widehat{\T^2}(y,h).$ Because $MA\Diag{y}$ has full row rank, $VNA\Diag{y}=0$ and $VNA\Diag{h}$ has full row rank, we have that $[MA\Diag{y};VNA\Diag{h}]$ has full row rank. Therefore, we may choose $z\in \R^n$ such that 
\begin{equation}\label{Oct_27_1}
MA\( h\circ w+2y\circ z \)=0,\ VNA\(w\circ w/4+2h\circ z\)=0.
\end{equation}
Because $MA\Diag{y}$ has full row rank, there exists $q\in \R^n$ such that
\begin{equation}\label{Oct_27_2}
MA\( w\circ w/4+2h\circ z+2y\circ q \)=0.
\end{equation}
With above chosen vectors $z$ and $q,$ define the mapping $\gamma:\R\rightarrow \R^n$ such that
\begin{equation}
\gamma(t):=y\circ y+2ty\circ h+t^2\(h\circ h+y\circ w\)+t^3\(h\circ w+2y\circ z\)+t^4\(w\circ w/4+2h\circ z+2y\circ q\),\notag
\end{equation}
which comes from the fourth-order Taylor expansion of the following function of 
$t$:
$$\(y+th+{t^2w}/{2}+t^3z+t^4q\)\circ \(y+th+{t^2w}/{2}+t^3z+t^4q\).$$
From $MA\( y\circ h \)=0,$ $MA\( h\circ h+y\circ w \)=0,$ (\ref{Oct_27_1}) and (\ref{Oct_27_2}), we have that $MA\gamma(t)=Mb.$
From $NA\Diag{y}=0,$ $NA\(h\circ h\)=0,$ $VNA\( h\circ w \)=0$ and (\ref{Oct_27_1}), we have that $VNA\gamma(t)=0.$
 From $WNA\Diag{y}=0,$ $WNA\dd(h)=0$ and $WNA(w\circ w)=0,$ we have that $ WNA\gamma(t)=0.$ Combining $MA\gamma(t)=Mb,$ $VNA\gamma(t)=0$ and $WNA\gamma(t)=0,$ we have $[M;VN;WN]A\gamma(t)=[M;VN;WN]b.$ By using the fact that $[M;N]$ and $[V;W]$ are full rank, we have that $[M;VN;WN]$ is full rank and 
 so $A\gamma(t)=b.$ After some algebraic manipulations, we have that
 \begin{equation}\label{2023_Sep18_1}
 \gamma(t)=\(y+th+{t^2}w/2\)\circ \(y+th+{t^2}w/2\)+2t^3 y\circ z+2t^4 h\circ z+2t^4 y\circ q.
 \end{equation}
From (\ref{2023_Sep18_1}), there exists $\delta>0$ such that for any $t\in [-\delta,\delta],$ $\gamma(t)\in \R^n_+.$ This implies that for any $t\in [-\delta,\delta],$ $\sqrt{\gamma(t)}\in \PA.$ Simple Taylor expansion shows that
  \begin{equation}
 \sqrt{\gamma(t)}=\(y+th+{t^2}w/2\)+O(t^3).
 \end{equation}
 Therefore, $\dist(y+th+t^2w/2,\PA) \leq \dist(y+th+t^2w/2,\sqrt{\gamma(t)}) = o(t^2)$. Hence
 $w\in \T^{i,2}_{\PA}(y,h)$ and so $\widehat{\T^2}(y,h) \subset \T^{i,2}_{\PA}(y,h).$
\end{proof}

\subsection{Proof of Proposition~\ref{biparprop}}

\begin{proof}
{\bf Sufficiency} Assume on the contrary that $\BG(Y)$ is not connected. Because $Y$ has no zero columns and rows, there exists decompositions $\I_1\sqcup \I_2=\J_1\sqcup \J_2=[n]$ such that $\I_1,\J_1,\I_2,\J_2\neq \emptyset $ and $\BG(Y)$ has no edges in $\I_1\times \J_2$ or $\I_2\times \J_1.$ This implies that $Y_{\I_1,\J_2}={\bf 0}_{|\I_1|\times |\J_2|}$ and $Y_{\I_2,\J_1}={\bf 0}_{|\I_2|\times |\J_1|}.$ Let $X=Y\circ Y,$ from the definition of $\B_n,$ we have that $|\I_1|=|\I_1|$ and $|\I_2|=|\J_2|.$ Moreover, $X_{\I_1,\J_1}\in \B_{|\I_1|},$ $X_{\I_2,\J_2}\in \B_{|\I_2|},$ which implies that $Y_{\I_1,\J_1}\in \BH_{|\I_1|},$ $Y_{\I_2,\J_2}\in \BH_{|\I_2|}.$ The Jacobian of the constraints in $\BH_n$ at $Y$ is the linear operator 
\begin{equation}\label{Jaco}
{\bf J}_Y: H\rightarrow \left[ 2\dd(YH^\top);2\dd\(Y^\top H\)_{1:n-1} \right],
\end{equation}
whose adjoint mapping is as follows:
\begin{equation}\label{adJaco}
{\bf J}_Y^*: [\lambda;\mu]\rightarrow  2\DD(\lambda)Y+2Y\DD\([\mu;0]\).
\end{equation}
Without loss of generality, assume that $n\in \J_2.$ Now, we choose $\lambda\in \R^n$ and $\mu\in \R^{n-1}$ such that $\lambda_{\I_1}={\bf 1}_{|\I_1|},$ $\lambda_{\I_2}={\bf 0}_{|\I_2|},$ $\mu_{\J_1}=-{\bf 1}_{|\J_1|}$ and $\mu_{\J_2\setminus \{n\}}={\bf 0}_{|\J_2|-1}.$ Substituting $[\lambda;\mu]$ into (\ref{adJaco}) and using the fact that $Y_{\I_1,\J_2}={\bf 0}_{|\I_1|\times |\J_2|}$ and $Y_{\I_2,\J_1}={\bf 0}_{|\I_2|\times |\J_1|}.$ We have that ${\bf J}_Y^*\( [\lambda;\mu] \)={\bf 0}_{n\times n}.$ Because $[\lambda;\mu]\neq 0,$ we know that ${\bf J}_Y^*$ is not injective and so $Y$ is singular. Thus we get a contradiction.\\

\noindent{\bf Necessity} Assume on the contrary that $Y$ is singular. Then there exists $(\lambda,\mu)\in \R^n\times \R^{n-1}$ such that $[\lambda;\mu]\neq {\bf 0}_{2n-1}$ and ${\bf J}_Y^*([\lambda;\mu])=0.$ Let $\I_1:=\supp(\lambda),$ $\I_2:=[n]\setminus \I$ and $\J_1:=\supp(\mu),$ $\J_2:=[n-1]\setminus \J_1.$ From (\ref{adJaco}), we have the following equation:
\begin{equation}
\begin{bmatrix}
\DD(\lambda_{\I_1})Y_{\I_1,\J_1}&\DD(\lambda_{\I_1})Y_{\I_1,\J_2}&\DD(\lambda_{\I_1}) Y_{\I_1,n}\\
{\bf 0}_{|\I_2|\times |\J_1|} & {\bf 0}_{|\I_2|\times |\J_2|} & {\bf 0}_{|\I_2|}
\end{bmatrix}
+
\begin{bmatrix}
Y_{\I_1,\J_1}\DD(\mu_{\J_1})& {\bf 0}_{|\I_1|\times |\J_2|}& {\bf 0}_{|\I_1|}\\
Y_{\I_2,\J_1}\DD(\mu_{\J_1})& {\bf 0}_{|\I_2|\times |\J_2|}& {\bf 0}_{|\I_2|}
\end{bmatrix}=0,\notag
\end{equation}
from which we can easily see that 
\begin{equation}\label{Jacodis}
[Y_{\I_1,\J_2},Y_{\I_1,n}]={\bf 0}_{|\I_1|\times \(|\J_2|+1\)},\ Y_{\I_2,\J_1}={\bf 0}_{|\I_2|\times |\J_1|}. 
\end{equation}
Now we consider two cases.

\noindent{\bf Case 1.} $\J_1=\emptyset.$
 We have that $\J_2=[n-1].$ From (\ref{Jacodis}), we have that $Y_{\I_1,:}={\bf 0}_{|\I_1|\times n}.$ If $\I_1\neq \emptyset,$ then we have a contradiction because $Y$ doesn't have zero rows. Thus, we have that $\I_1=\emptyset.$ This implies that $[\lambda;\mu]={\bf 0}_{2n-1},$ which is a contradiction.
 
 \noindent{\bf Case 2.} $\J_1\neq \emptyset.$
 If $\I_1=\emptyset,$ then $\I_2=[n].$ From (\ref{Jacodis}), we have that $Y_{:,\J_1}={\bf 0}_{n\times |\J_1|},$ which contradicts the fact that $Y$ doesn't have zero columns. If $\I_2=\emptyset,$ then $\I_1=[n].$ From (\ref{Jacodis}), we have that $Y_{:,n}={\bf 0}_n,$ which also contradicts to that $Y$ doesn't have zero columns. Therefore, $\I_1,\I_2,\J_1,\J_2\cup \{ n\}$ are all nonempty. From (\ref{Jacodis}), $\BG(Y)$ is disconnected, and we get a contradiction. 
\end{proof}

\subsection{Proof of Proposition~\ref{manidecB}}

\begin{proof}
Without loss of generality assume that $n\in \J_k.$ Since $X:=Y\circ Y\in \B_n,$ we have that for any $i\in [k],$ $X_{\I_i,\J_i}\in \B_{|\I_i|}$ and thus $|\I_i|=|\J_i|.$ 

\noindent{\bf Case 1.} $Y$ is smooth. 

According to Proposition~\ref{biparprop}, $\BG(Y)$ is connected. This means that $k=1.$ In this case, $Z\in \M(Y)$ is equivalent to that $Z$ is smooth. This is further equivalent to that $\BG(Z)$ is connected, i.e., it has the same connected components as $\BG(Y).$ 

\noindent{\bf Case 2.}  $Y$ is singular. 

According to Proposition~\ref{biparprop}, $k\geq 2.$ Let $(\lambda,\mu)\in \R^{n}\times \R^{n-1}.$ From (\ref{adJaco}) and sparsity pattern of $\BG(Y)$, we know that ${\bf J}_Y^*([\lambda;\mu])=0$ is equivalent to that the following conditions hold:
\begin{itemize}
\item $\forall i\in [k-1],\ \DD(\lambda_{\I_i})Y_{\I_i,\J_i}+Y_{\I_i,\J_i}\DD(\mu_{\J_i})
={\bf 0}_{|\I_i|\times |\J_i|}$,
\item $ \DD(\lambda_{\I_k})Y_{\I_k,\J_k}+Y_{\I_k,\J_k}\DD([\mu_{\J_k\setminus\{n\}};0])=0$.
\end{itemize}
Because for any $i\in [k],$ $\BG(Y_{\I_i,\J_i})$ is connected, according to Lemma~\ref{lemconnec}, the above conditions are further equivalent to that

\begin{itemize}
\item $\forall i\in [k-1],\ \(\lambda_{\I_i},\mu_{\J_i}\)\in {\rm span}\( \({\bf 1}_{|\I_i|},-{\bf 1}_{|\I_i|}\) \)$,
\item $\lambda_{\I_k}={\bf 0}_{|\I_k|},$ $\mu_{\J_k\setminus \{n\}}={\bf 0}_{|\I_k|-1}$.
\end{itemize}
Note that the vectors $(\lambda,\mu)\in \R^{n}\times \R^{n-1}$ satisfying the above conditions is a linear subspace $\mathcal{LS}(Y)$ in $\R^{n\times n}$. From Definition~\ref{Regrep}, we can see that $(M,N,r)$ is a regular representation of $Y$ if and only if: (1) the row vectors of $N$ is a basis of $\mathcal{LS}(Y)$; (2) the row vectors of $M$ is a basis of the orthogonal complement of $\mathcal{LS}(Y)$ in $\R^{n\times n}$; (3) $r=2n-1-{\rm dim}\( \mathcal{LS}(Y) \)=2n-k.$ Note that all these properties are determined by 
the linear space $\mathcal{LS}(Y),$ which is determined by the connected components of $\BH(Y).$ Therefore, we have that $Z\in \M(Y)$ if and only if it has the same connected components as $Y.$
\end{proof}

\subsection{Proof of Proposition~\ref{gdprop}}

\begin{proof}
From the continuous differentiability of $\phi$ and smoothness of $\M(y),$ we have that $\g f (y)$ is also continuous on $\M(y).$ Moreover, there exists $\beta_{y,1},\ c_{y,1}>0$ such that for any $z\in \M(y)\cap B_{\beta_{y,1}}(y)$ and any $t\in [0,c_{y,1}]$, there following three inequalities hold:
\begin{equation}\label{Sep_24_0}
\int_0^1 \|\gamma'(s)\|{\rm d}s\leq 2\| \Re\(z,-t \g f(z)\)-z \|\leq 4t\|\g f(y)\|,
\end{equation}
where $\gamma:[0,1]\rightarrow \M(y)$ is the geodesic from $z$ to $\Re\(z,-t \g f(z)\)$;
\begin{equation}\label{Sep_24_0.1}
\sup_{s\in [0,1]}\{\| \g f(\gamma(s))-\g f(y) \|\}\leq (1-\delta)\| \g f(y)\|/8;
\end{equation}
\begin{eqnarray}
&& \< \g f(y),\Re\(z,-t \g f(z)\)-z +t \g f(y)\>
\nonumber \\
&=&\< \g f(y),\Re\(z,-t \g f(z)\)-z +t \g f(z)\>+\< \g f(y),t\g f(y)-t\g f(z)\>
\nonumber \\
&\leq & t(1-\delta)\| \g f(y)\|^2/2.
\label{Sep_24_0.2}
\end{eqnarray}
Note that we have used the fact that $\|\g f(y)\|\neq 0$ to ensure the above three inequalities. Therefore, we have that
\begin{eqnarray}
&& \hspace{-7mm} f\( \Re\(z,-t \g f(z)\) \)-f(z)=\int_0^1 \< \g f(\gamma(s)),\gamma'(s) \>{\rm d}s
\nonumber \\
&=& \int_0^1\< \g f(\gamma(s))-\g f(y),\gamma'(s) \>{\rm d}s+\int_0^1\< \g f(y),\gamma'(s) \>{\rm d}s
\nonumber \\
&\leq& \int_0^1 \|\g f(\gamma(s))-\g f(y)\|\|\gamma'(s)\|{\rm d}s
+\< \g f(y),\Re\(z,-t \g f(z)\)-z \>
\nonumber \\
&\leq & 
t (1-\delta)\| \g f(y) \|^2/2+t(1-\delta)\| \g f(y)\|^2/2-t \| \g f(y) \|^2
\nonumber \\
&=& -t\delta \| \g f(y)\|^2,
\label{Sep_24_1}
\end{eqnarray}
where the second inequality comes from the inequalities (\ref{Sep_24_0}), (\ref{Sep_24_0.1}) and (\ref{Sep_24_0.2}).
\end{proof}

\subsection{Proof of Proposition~\ref{PGprop}}

\begin{proof}
From the continuity of $\nabla \phi(x)$ and the projection mapping $\P_{\C_{A,b}}(\cdot),$ there exists $\beta_{y,2}$, $c_{y,2}>0$ such that for any $u\in \C_{A,b}\cap B_{\beta_{y,2}}(x)$ and $t\in (0,c_{y,2}],$ the following inequality holds:
\begin{equation}\label{Sep_24_2}
\sup_{s\in [0,1]}\(\| \nabla \phi(\gamma(s))-\nabla \phi(u) \|\)\leq (1-\delta)\| g_t(u) \|/t,
\end{equation}
where $\gamma(s)= (1-s)u+s\P_{\C_{A,b}}\( u-t \nabla \phi(u) \).$ Note that we have used $\|g_t(u)\|\neq 0$ to get (\ref{Sep_24_2}). Then, we have that
\begin{eqnarray*}
&& \hspace{-7mm}
\phi\( \P_{\C_{A,b}}\( u-t \nabla \phi(u) \) \)-\phi(u)=\phi\(u+g_t(u)\)-\phi(u)=\int_0^1  \< \nabla \phi(\gamma(s)),\gamma'(s) \>{\rm d}s
\nonumber \\
&=& \int_0^1  \< \nabla \phi(\gamma(s))-\nabla\phi(u),\gamma'(s) \>{\rm d}s+\int_0^1  \< \nabla\phi(u),\gamma'(s) \>{\rm d}s
\\
&\leq& \int_0^1  \|\nabla \phi(\gamma(s))-\nabla\phi(u)\|\|\gamma'(s) \|{\rm d}s
+\< \nabla\phi(u),g_t(u) \>
\nonumber \\
& \leq & \sup_{s\in [0,1]}\(\| \nabla \phi(\gamma(s))-\nabla \phi(u) \|\)\| g_t(u) \|-\| g_t(u) \|^2/t
\nonumber \\
&\leq& -\delta\| g_t(u) \|^2/t=-t\cdot \delta\| h_t(u) \|^2,
\end{eqnarray*}
where the second inequality comes from Proposition~\ref{PGdefi} and the third   from (\ref{Sep_24_2}).
\end{proof}

\bibliographystyle{abbrv}
\bibliography{LCP}

\end{document}